\newtheorem{theorem}{Theorem}[section]
\newtheorem{lemma}[theorem]{Lemma}
\newtheorem{corollary}[theorem]{Corollary}
\newtheorem{proposition}[theorem]{Proposition}
\newtheorem{fact}[theorem]{Fact}
\newtheorem{claim}[theorem]{Claim}
\newtheorem*{claim*}{Claim}
\newtheorem{problem}[theorem]{Problem}
\newcounter{maintheorem}
\newtheorem{mainth}[maintheorem]{Theorem}
\theoremstyle{remark}
\newtheorem{remark}[theorem]{Remark}
\theoremstyle{definition}
\newtheorem{definition}[theorem]{Definition}
\numberwithin{equation}{section}
\newcommand{\R}{\mathbb{R}}
\newcommand{\ZZ}{\mathbb{Z}}
\newcommand{\N}{\mathbb{N}}
\newcommand{\e}{\varepsilon}
\newcommand{\p}{\varphi}
\newcommand{\n}{\left\Vert\cdot\right\Vert}
\newcommand{\nn}[1]{{\left\vert\kern-0.25ex\left\vert\kern-0.25ex\left\vert #1 
\right\vert\kern-0.25ex\right\vert\kern-0.25ex\right\vert}}
\renewcommand{\leq}{\leqslant}
\renewcommand{\geq}{\geqslant}
\renewcommand{\tilde}{\widetilde}
\newcommand{\spn}{{\rm span}}
\DeclareMathOperator{\supp}{supp}
\DeclareMathOperator{\dens}{dens}
\DeclareMathOperator{\dist}{dist}
\newcommand{\X}{\mathcal{X}}
\newcommand{\Y}{\mathcal{Y}}
\newcommand{\Z}{\mathcal{Z}}
\newcommand{\D}{\mathcal{D}}
\newcommand{\G}{\mathcal{G}}
\renewcommand{\H}{\mathcal{H}}
\newcommand{\T}{\mathcal{T}}
\newcommand{\B}{\mathcal{B}}
\newcommand{\V}{\mathcal{V}}
\newcommand{\W}{\mathcal{W}}
\renewcommand\qedsymbol{$\blacksquare$} 
\title[Lattice tilings of Hilbert spaces]{Lattice tilings of Hilbert spaces}
\author[C.A.~De~Bernardi]{Carlo Alberto De Bernardi}
\address[C.A.~De~Bernardi]{Dipartimento di Matematica per le Scienze economiche, finanziarie ed attuariali, Universit\`a Cattolica del Sacro Cuore, 20123 Milano, Italy \newline
\href{https://orcid.org/0000-0002-9654-1324}{\texttt{ORCID:0000-0002-9654-1324}}}
\email{carloalberto.debernardi@unicatt.it, carloalberto.debernardi@gmail.com}
\author[T.~Russo]{Tommaso Russo}
\address[T.~Russo]{Universit\"{a}t Innsbruck, Department of Mathematics, Technikerstra\ss e 13, 6020 Innsbruck, Austria \newline
\href{https://orcid.org/0000-0003-3940-2771}{\texttt{ORCID:0000-0003-3940-2771}}}
\email{tommaso.russo@uibk.ac.at, tommaso.russo.math@gmail.com}
\author[J.~Somaglia]{Jacopo Somaglia}
\address[J.~Somaglia]{Politecnico di Milano, Dipartimento di Matematica, Piazza Leonardo da Vinci 32, 20133 Milano, Italy \newline
\href{https://orcid.org/0000-0003-0320-3025}{\texttt{ORCID:0000-0003-0320-3025}}}
\email{jacopo.somaglia@polimi.it}
\subjclass[2020]{46B04, 46B20, 46C05 (primary), and 46B26, 52A05, 51M20, 05B45 (secondary)}
\keywords{Tiling, Lattice tiling, Tiling by balls, Point-finite tiling, Hilbert space, Separated set, Voronoi cell, Star-finite tiling, Discrete subgroup of a normed space}
\thanks{The research of the authors has been partially supported by the GNAMPA (INdAM -- Istituto Nazionale di Alta Matematica).}
\begin{document}
\begin{abstract} We construct a bounded and symmetric convex body in $\ell_2(\Gamma)$ (for certain cardinals $\Gamma$) whose translates yield a tiling of $\ell_2(\Gamma)$. This answers a question due to Fonf and Lindenstrauss. As a consequence, we obtain the first example of an infinite-dimensional reflexive Banach space that admits a tiling with balls (of radius $1$). Further, our tiling has the property of being point-countable and lattice (in the sense that the set of translates forms a group). The same construction performed in $\ell_1(\Gamma)$ yields a point-$2$-finite lattice tiling by balls of radius $1$ for $\ell_1(\Gamma)$, which compares to a celebrated construction due to Klee. We also prove that lattice tilings by balls are never disjoint and, more generally, each tile intersects as many tiles as the cardinality of the tiling. Finally, we prove some results concerning discrete subgroups of normed spaces. By a simplification of the proof of our main result, we prove that every infinite-dimensional normed space contains a subgroup that is $1$-separated and $(1+\e)$-dense, for every $\e>0$; further, the subgroup admits a set of generators of norm at most $2+\e$. This solves a problem due to Swanepoel and yields a simpler proof of a result of Dilworth, Odell, Schlumprecht, and Zs\'ak. We also give an alternative elementary proof of Stepr\={a}ns' result that discrete subgroups of normed spaces are free.
\end{abstract}
\maketitle

\section{Introduction}
A tiling of a normed space $\X$ is a family of bodies that cover $\X$ and have mutually disjoint non-empty interiors. (We refer to \Cref{sec: prelim} for a more detailed account on the various notions that we don't define in this section.) The study of tilings in finite dimensional normed spaces, most notably the Euclidean plane, can be traced back to the beginnings of Geometry and even made its way into the visual arts, notably in the work of Escher, \cites{Behrends, Escher}. Several elementary introductions to the topic are available in the literature and we refer, \emph{e.g.}, to \cites{Adams_tiling, Behrends, GruShe} for an overview of the area. Remarkably, tilings in finite dimensions tend to be periodic and the construction of aperiodic tilings has been a long-standing open problem, especially due to its connection to the modelling of quasicristals, \cites{AxelGratias, Kellendonk, Radin, Sadun, Trevino}. Nowadays, several constructions of aperiodic tilings are available, by rotations and translations of a large finite set of tiles, \cites{Berger, Robinson, Wang}, by rotations and translations of two tiles (Penrose' `kites' and `darts'), \cites{D'Andrea, Gardner, Penrose1, Penrose2}, by translations and rotations of a single tile, \cites{monotile1, monotile2}, and, finally, only by translations of a single tile, \cites{GreenfeldTao}.

The situation is radically different in infinite dimensions and the only example of an infinite-dimensional normed space that comes naturally equipped with a well-behaved tiling is $c_0$, where it is enough to translate the unit ball by the even integers grid. Because of this reason, several attempts have been made to produce tilings that are as regular as possible, also in infinite-dimensions. Perhaps the most successful such an attempt is Preiss' result \cite{Preiss} that $\ell_2$ admits a normal tiling (in the sense that the inner radii and the diameters of the convex bodies are uniformly bounded). An earlier result, valid for every normed space, but with only radii uniformly bounded from below, was obtained by Fonf, Pezzotta, and Zanco in \cite{FPZ_BLMS}; instead, a tiling with convex bodies having uniformly bounded diameters is possible in spaces having the Radon-Nikod\'ym property, \cite{FonfLind}. A variation of the argument in \cite{FPZ_BLMS} permits to obtain a point-finite tiling with bounded convex bodies (but with no lower bound on the inner radii), \cite{MarZan}. Our main result, \Cref{mainth: ell2 body} below, gives an example of a periodic tiling in some (non-separable) Hilbert spaces (as it is more customary in the literature of infinite-dimensional tilings, we will write \emph{lattice} instead of \emph{periodic}).
\smallskip

On the other hand, there are several results asserting the impossibility to tile with bodies having good geometric properties. For instance, an elegant deduction from Sierpi\'nski's theorem on continua \cite{Sierpinski} is that a separable normed space cannot admit a tiling by rotund bodies, \cite{KleeMalZan}. Dually, Klee and Tricot proved that separable Banach spaces do not admit tilings by smooth and bounded convex bodies, \cite{KleeTri}. Outside of the separable realm, another result from \cite{KleeMalZan} is that neither uniformly convex nor uniformly smooth Banach spaces admit tilings by balls whose radii are bounded below. Recently, this result has been simplified and largely generalised to the fact that Fr\'echet smooth or LUR Banach spaces do not admit tilings by balls, \cite{DEVEtiling}. In the same paper it is also shown that normed spaces whose unit ball has some LUR point don't admit tilings by balls with radii bounded below.

There also are results where `local' properties prevent the existence of a tiling, or, more generally, of a covering. The seminal result in this direction is Corson's theorem \cite{Corson} that infinite-dimensional reflexive Banach spaces do not admit locally finite coverings by bounded convex bodies. This has been generalised in \cite{FZ_PAMS06} to the assertion that only $c_0$-saturated Banach spaces might admit such coverings. Further, Fonf \cite{Fonf3poly} proved that a separable Banach space admits a locally finite \emph{tiling} by bounded convex bodies if and only if it is isomorphically polyhedral. Other properties, such as point-finiteness, or star-finiteness have been investigated and will play a role in our paper later. Answering a question of Klee \cite{Klee1}, it was proved in \cite{FZHilbert} that the separable Hilbert space does not admit a point-finite covering by balls; this was later extended to $L^p$ spaces in \cite{FonfLevZan14}. The first-named author gave in \cite{DEBEpointfinite} a simplified proof of these negative results, showing in particular that $\ell_p(\Gamma)$ ($1\leq p<\infty$) does not admit a point-finite covering by balls whenever $|\Gamma|<\mathfrak{c}$. We refer, \emph{e.g.}, to \cites{DESOVEstar, DEVEtiling, DEJSnormal2, Klee2, Zanco_carpet} for more information on coverings and tilings in infinite dimensions.
\smallskip

For our perspective, the most interesting result is the celebrated construction, due to Klee \cites{Klee1, Klee2}, of a disjoint tiling of $\ell_1(\Gamma)$ with balls of radius $1$, for all cardinals $\Gamma$ such that $\Gamma^\omega= \Gamma$. The additional, and strikingly counter-intuitive, property that the tiling is disjoint implies that the centers of the balls constituting the tiling form a discrete Chebyshev set (namely, for every $x\in \ell_1(\Gamma)$ there exists one and only one point at minimal distance). This seminal result has influenced several other results on tilings, such as \cites{DEVEtiling, KleeMalZan}, and Klee's method of proof also plays an important role in our arguments.

To the best of our knowledge, the only known examples of infinite-dimensional normed spaces that admit tilings with balls (not necessarily of the same radius) are those based on the $c_0$ ball or on Klee's tiling of $\ell_1(\Gamma)$. In fact, translating the unit ball by the even integer grid also gives a tiling of $\ell_\infty(\Gamma)$ and of some suitable subspaces, such as $c_0(\Gamma)$, or the subspace $\ell_\infty^c(\Gamma)$ of countably supported bounded functions. Similarly, Klee's construction also applies to normed spaces of the form $\X\oplus_1\ell_1(\Gamma)$, whenever $\Gamma^\omega= \Gamma$ and $\dens(\X)\leq \Gamma$ (we will say more on this in \Cref{thm: tiling if ell1 inside}). Since all these spaces are very far from being reflexive, the following problem arises naturally.

\begin{problem}[Fonf and Lindenstrauss, \cites{FonfLind, GMZ}]\label{probl: FL} Does there exist an infinite-dimensional reflexive Banach space that can be tiled by translates of a bounded convex body?
\end{problem}

The present formulation is taken from the monograph \cite{GMZ}, where it is attributed to Fonf and Lindenstrauss; the formulation in \cite{FonfLind} actually asks the extra condition that the space is separable. Yet another formulation of the problem has been recorded in \cites{Zanco_carpet} and some remarks on possible approaches to a solution can also be found in the Handbook's article \cite{FLP_Handbook}. The main result of our paper is that such a tiling is indeed possible, even in a (non-separable) Hilbert space. More precisely, we have the following result.

\begin{mainth}\label{mainth: ell2 body} For every cardinal $\Gamma$ such that $\Gamma^\omega= \Gamma$, $\ell_2(\Gamma)$ admits a lattice tiling by translates of a symmetric and bounded convex body.

Therefore, there exists an equivalent norm $\nn\cdot$ on $\ell_2(\Gamma)$ such that $(\ell_2(\Gamma), \nn\cdot)$ admits a lattice tiling by balls.
\end{mainth}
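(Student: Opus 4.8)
The second assertion will follow formally from the first, so I focus on the first and aim to realize the tiling as the Voronoi tessellation of a suitable discrete subgroup. Indeed, suppose $\{C+g:g\in G\}$ is a lattice tiling of $\ell_2(\Gamma)$ by translates of a symmetric, bounded, closed convex body $C$ with $0\in\operatorname{int}C$. Then the Minkowski functional $\nn{\cdot}$ of $C$ is an equivalent norm on $\ell_2(\Gamma)$ whose closed unit ball is exactly $C$ (symmetry gives evenness, convexity gives sublinearity, boundedness of $C$ gives positivity, and $0\in\operatorname{int}C$ gives the upper estimate), so the same family is a lattice tiling of $(\ell_2(\Gamma),\nn{\cdot})$ by balls of radius $1$.

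So the plan reduces to the following. Suppose $G\le\ell_2(\Gamma)$ is a subgroup that is (a) $1$-separated and (b) $D$-dense for some constant $D<\infty$, and put
\[ C:=\bigl\{x\in\ell_2(\Gamma):\ \|x\|\le\|x-g\|\ \text{for all}\ g\in G\bigr\}=\bigl\{x:\ \langle x,g\rangle\le\tfrac12\|g\|^2\ \text{for all}\ g\in G\bigr\}. \]
In a Hilbert space $C$ is an intersection of closed half-spaces, hence closed and convex, and it is symmetric since $G=-G$; by (a) it contains $\{\|x\|\le\tfrac12\}$ (as $|\langle x,g\rangle|\le\tfrac12\|g\|\le\tfrac12\|g\|^2$ whenever $\|g\|\ge1$), so it is a body. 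The translates $\{C+g\}$ have pairwise disjoint interiors for purely formal reasons: if $x\in\operatorname{int}(C+g_1)\cap\operatorname{int}(C+g_2)$ with $g_1\ne g_2$, then $x-g_i\in\operatorname{int}(C)$, and since an open subset of a closed half-space lies in the corresponding open half-space, $\langle x-g_1,g_2-g_1\rangle<\tfrac12\|g_2-g_1\|^2$ and $\langle x-g_2,g_1-g_2\rangle<\tfrac12\|g_1-g_2\|^2$; expanding the squares, the first inequality says $\|x-g_1\|<\|x-g_2\|$ and the second the reverse, a contradiction. Hence only two points remain: the family must \emph{cover} $\ell_2(\Gamma)$, i.e.\ for every $x$ there must be $g_0\in G$ with $x-g_0\in C$ (equivalently, $\dist(x,G)$ is attained), and $C$ must be \emph{bounded}, for which (b) suffices (if $x\in C$, pick $g$ with $\|x-g\|\le D$; then $\|x\|\le\|x-g\|\le D$).

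It remains to construct $G$, and here I would follow Klee's transfinite scheme, adapted so that the covering is forced. Since $\Gamma^\omega=\Gamma$ one has $|\ell_2(\Gamma)|=\Gamma$, so enumerate $\ell_2(\Gamma)=\{x_\alpha:\alpha<\Gamma\}$ and build, from $G_0=\{0\}$, an increasing chain $(G_\alpha)_{\alpha<\Gamma}$ of $1$-separated subgroups, taking unions at limits. At a successor step one adjoins finitely many new generators, supported on coordinates used neither by any $x_\beta$ ($\beta\le\alpha$) nor by any generator of $G_\alpha$ — such fresh coordinates always exist because fewer than $\Gamma$ coordinates are in use at each stage (here one uses $\operatorname{cf}(\Gamma)>\omega$, a consequence of $\Gamma^\omega=\Gamma$) — chosen so that $\dist(x_\alpha,G_{\alpha+1})\le D$ is witnessed by some $g_\alpha$ for which, moreover, $x_\alpha-g_\alpha$ lies in the Voronoi cell of $G_{\alpha+1}$, and so that $G_{\alpha+1}$ stays $1$-separated. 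Set $G:=\bigcup_{\alpha<\Gamma}G_\alpha$; it is $D$-dense by construction. The fresh-coordinate discipline makes the covering automatic: every generator adjoined after stage $\alpha$ is supported off $\supp(x_\alpha-g_\alpha)$, and $\ell_2(\Gamma)$ splits orthogonally along the coordinate blocks, so any $g\in G$ can be written $g=g'+g''$ with $g'\in G_{\alpha+1}$ and $\supp(g'')$ disjoint from $\supp(x_\alpha-g_\alpha)$; then $\langle x_\alpha-g_\alpha,g\rangle=\langle x_\alpha-g_\alpha,g'\rangle\le\tfrac12\|g'\|^2\le\tfrac12\|g\|^2$, whence $x_\alpha-g_\alpha\in C$, i.e.\ $x_\alpha\in C+g_\alpha$. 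Since every point of $\ell_2(\Gamma)$ is some $x_\alpha$, this completes the tiling.

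The main obstacle is, as one would expect, the Klee-type construction of the new generators at each successor step: one must keep the group $1$-separated through $\Gamma$ rounds of densification — adjoining a single generator creates a whole family of new group elements, any of which could be short — while simultaneously pushing each $x_\alpha$ within $D$ of the group and into its current Voronoi cell, and (for the paper's sharper conclusions) keeping the generators of norm at most $2+\e$. The fresh-coordinate device is what renders separation tractable — a fresh component of norm $\ge1$ in each new generator keeps it away from everything already present — but reconciling this with the covering and the norm bound, i.e.\ verifying that the combinatorial room afforded by $\Gamma^\omega=\Gamma$ is actually sufficient, is the crux; the remainder is the soft verification above.
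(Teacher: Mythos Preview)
Your overall architecture matches the paper's: Voronoi cells of a separated, dense subgroup, with convexity, symmetry, boundedness, non-overlap, and the passage to $\nn{\cdot}$ all handled correctly. The genuine gap is the covering argument. You assert that the new generators at stage $\alpha$ are ``supported on coordinates used neither by any $x_\beta$ ($\beta\le\alpha$) nor by any generator of $G_\alpha$'', but a generator supported entirely on fresh coordinates is orthogonal to $x_\alpha$ and hence cannot reduce $\dist(x_\alpha,G)$ at all; density forces the new generator to carry $x_\alpha$ itself. The paper indeed adjoins $u_\gamma+e_{\tilde\gamma}$, with only the single bump $e_{\tilde\gamma}$ fresh. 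But once the generator has an old-coordinate part, your decomposition $g=g'+g''$ with $g'\in G_{\alpha+1}$ and $\supp(g'')\cap\supp(x_\alpha-g_\alpha)=\emptyset$ collapses: for $\delta>\alpha$ the vector $u_\delta$ is an arbitrary element of $\ell_2(\Gamma)$, so nothing prevents $\tilde\alpha\in\supp(u_\delta)$, and the resulting $\tilde\alpha$-component of $g$ is a real (not integer) multiple of $e_{\tilde\alpha}$ that cannot be absorbed into $G_{\alpha+1}$. So the orthogonality bookkeeping does not establish $x_\alpha-g_\alpha\in C$.

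The paper avoids this entirely by strengthening your $1$-separation to $(\sqrt{2}+)$-separation, i.e.\ $K(\ell_2(\Gamma))$-separation, and then invoking Klee's lemma: any $K(\X)$-separated, $1$-dense set is proximinal, because every ball of radius $<1$ meets it in a finite set. This makes covering automatic and eliminates the need for any support-tracking across stages. The successor step then becomes a single clean computation: with $\D_\gamma=\bigcup_{\alpha<\gamma}\D_\alpha+(u_\gamma+e_{\tilde\gamma})\ZZ$ one has $\|d+n(u_\gamma+e_{\tilde\gamma})\|_2^2=\|d+nu_\gamma\|_2^2+n^2$, and a three-case check on $|n|\in\{0,1,\ge2\}$ (using $\|d\pm u_\gamma\|_2>1$ when $|n|=1$) gives $>2$. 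So the missing idea is to target the Kottman constant as the separation threshold and let Klee's proximinality lemma do the work your fresh-coordinate argument was meant to do.
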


Plainly, every symmetric and bounded convex body is the unit ball of an equivalent norm. Therefore, the second clause follows immediately from the first one. As it is apparent, not only our main result answers \Cref{probl: FL}, but we also get a stronger result. First of all, our construction yields the first example of an infinite-dimensional reflexive (even isomorphic to a Hilbert space) Banach space that admits a tiling with balls. Further, we also have the additional regularity property that the tiling is lattice (see \Cref{def: lattice}). This property is obviously shared with the tiling of $c_0$, but, as we will explain later, is not present in Klee's tiling. Therefore, in this sense, our tiling is more regular than the original construction in Klee and resembles the periodic tilings in finite dimensions from the first paragraph.

One additional interesting property is that our tiling is point-countable (\Cref{prop: point-countable}); further, our construction leads to a point-countable covering of $\ell_2(\Gamma)$ with balls of radius $1$ (in the canonical norm). The relevance of this result is that, when $\Gamma< \mathfrak{c}$, $\ell_2(\Gamma)$ does not admit point-finite coverings by balls, \cites{DEBEpointfinite, FZHilbert}, and it is an important open problem whether $\ell_2(\mathfrak{c})$ admits such a covering. While we don't solve this problem, we have the weaker result that a point-countable covering by balls of radius $1$ is possible.
\smallskip

Our construction, whose main idea we explain below, admits a counterpart for $\ell_1(\Gamma)$, where we obtain the following result.

\begin{mainth}\label{mainth: ell1 balls} For every cardinal $\Gamma$ such that $\Gamma^\omega= \Gamma$, $\ell_1(\Gamma)$ admits a point-$2$-finite lattice tiling by balls (in its original norm).
\end{mainth}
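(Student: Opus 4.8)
The plan is to realise the tiling through a suitable \emph{subgroup} $G$ of $(\ell_1(\Gamma),+)$, the tiles being the closed unit balls centred at the points of $G$. Observe first that it suffices to produce a subgroup $G\leq\ell_1(\Gamma)$ that is (i) \emph{$2$-separated}, meaning $\|g-h\|_1\geq 2$ whenever $g,h\in G$ are distinct; (ii) \emph{$1$-dense}, meaning $\dist(x,G)\leq 1$ for every $x\in\ell_1(\Gamma)$; and (iii) such that $\#\{g\in G\colon \|x-g\|_1\leq 1\}\leq 2$ for every $x\in\ell_1(\Gamma)$. Indeed, writing $B$ for the closed unit ball of $\ell_1(\Gamma)$, condition (ii) says that $\mathcal T:=\{g+B\colon g\in G\}$ covers $\ell_1(\Gamma)$, while (i) forces the interiors $g+\mathrm{int}\,B$ ($g\in G$) to be pairwise disjoint, since a common point $x$ of two of them would give $\|g-h\|_1\leq\|g-x\|_1+\|x-h\|_1<2$; hence $\mathcal T$ is a tiling, it is lattice by construction (see \Cref{def: lattice}), and (iii) is precisely point-$2$-finiteness.

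For the group $G$ I would take the one produced in the proof of \Cref{mainth: ell2 body}, or, rather, its analogue carried out inside $\ell_1(\Gamma)$: as in Klee's construction \cites{Klee1,Klee2}, the hypothesis $\Gamma^\omega=\Gamma$ is used to identify the index set with a set large enough to record the (at most countable) support of every vector of $\ell_1(\Gamma)$, and $G$ is the subgroup generated by an explicit family of finitely supported vectors of $\ell_1$-norm $2$. With such an explicit description at hand, the group axioms are immediate, and (i) reduces to the elementary fact that every non-zero element of $G$ has $\ell_1$-norm at least $2$; these verifications I expect to be routine.

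The heart of the matter is (ii), a Klee-type \emph{rounding lemma}: given $x\in\ell_1(\Gamma)$ one has to exhibit $g\in G$ with $\|x-g\|_1\leq 1$. Rounding every coordinate of $x$ to the nearest even integer is useless, since the accumulated error is unbounded; instead, following Klee's idea, one rounds only finitely many coordinates and then \emph{absorbs the entire remaining tail of $x$ into a single coordinate}. This is possible precisely because the index set, being as large as $\Gamma^\omega$, supplies a fresh coordinate that can be ``labelled'' by the tail of $x$, and because the generators of $G$ have been tailored so that the required correction vector is itself an element of $G$ --- this last point being where the construction genuinely departs from Klee's, whose centres need not form a group; bookkeeping of the errors then keeps the total at most $1$. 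I regard this step as the main obstacle: it is where the cardinality hypothesis and the precise choice of generators are decisive, and it must moreover be performed with enough control to deliver (iii) as well.

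Finally, (iii) is obtained by a uniqueness analysis refining the rounding lemma. If $g\neq h$ both satisfy $\|x-g\|_1,\|x-h\|_1\leq 1$, then (i) forces $\|x-g\|_1=\|x-h\|_1=1$ and $\|g-h\|_1=2$, and equality in the triangle inequality $\|g-h\|_1\leq\|g-x\|_1+\|x-h\|_1$ in $\ell_1(\Gamma)$ means that, coordinatewise, $x_\gamma$ lies between $g_\gamma$ and $h_\gamma$; in particular $\supp(x)\subseteq\supp(g)\cup\supp(h)$, a finite set, and $x$ is pinned down on it up to the single scalar constraint $\sum_\gamma|x_\gamma-g_\gamma|=1$. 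One then checks, using the explicit form of the elements of $G$, that no third element $k\in G$ can satisfy the analogous constraints (it would need $\|g-k\|_1=\|h-k\|_1=2$ together with the corresponding coordinatewise tightness), so at most two tiles of $\mathcal T$ contain $x$. We note that point-$2$-finiteness is best possible here: a lattice tiling by balls is never disjoint --- as is shown elsewhere in this work --- hence never point-$1$-finite; this contrasts with Klee's tiling, which is disjoint but, as noted above, not lattice.
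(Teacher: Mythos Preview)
Your overall framework --- build a $2$-separated, $1$-dense subgroup $G\leq\ell_1(\Gamma)$ and use the balls $g+B$ as tiles --- is exactly the paper's. However, several of the ingredients you describe do not match the actual construction and would not survive as written.

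First, the generators of $G$ are \emph{not} finitely supported vectors of $\ell_1$-norm $2$. In the paper the group is built by transfinite induction: one enumerates $\ell_1(\Gamma)=\{u_\alpha\}_{\alpha<\Gamma}$ (this is where $\Gamma^\omega=\Gamma$ enters, via $|\ell_1(\Gamma)|=\Gamma$), and at stage $\gamma$, if $u_\gamma$ is not yet within distance $1$ of the group, one adjoins the single generator $u_\gamma+e_{\tilde\gamma}$, where $\tilde\gamma$ is a fresh coordinate disjoint from all supports used so far. These generators have $\|u_\gamma+e_{\tilde\gamma}\|_1=\|u_\gamma\|_1+1>2$ and countable, not finite, support. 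The $1$-density is thus immediate from the enumeration; there is no Klee-type rounding lemma, and none is needed. Your proposed rounding scheme, by contrast, is essentially Klee's original device, and the difficulty you flag --- arranging that ``the required correction vector is itself an element of $G$'' --- is precisely what the paper sidesteps by adjoining $u_\gamma+e_{\tilde\gamma}$ directly.

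Second, your point-$2$-finiteness argument has a gap. The supports of $g,h\in G$ are countable, not finite, so your claim that $\supp(x)\subseteq\supp(g)\cup\supp(h)$ is ``a finite set'' is wrong, and the vague appeal to ``the explicit form of the elements of $G$'' does not pin down why a third element $k$ is excluded. The paper's route is much sharper: one proves by transfinite induction along the construction that
\[
G\cap 2S_{\ell_1(\Gamma)}\subseteq\{\pm 2e_\alpha\}_{\alpha<\Gamma},
\]
using the splitting $\|d+n(u_\gamma+e_{\tilde\gamma})\|_1=\|d+nu_\gamma\|_1+|n|$ to rule out $|n|=1$ and force $d=-nu_\gamma$ when $|n|=2$. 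Once this structural fact is in hand, if $x\in B\cap(d+B)$ with $d\neq 0$ then $\|d\|_1=2$, so $d=\pm 2e_\alpha$; the equalities $\|x\|_1=\|x-d\|_1=1$ then force $x=\pm e_\alpha$ and $d=2x$, which determines $d$ uniquely and gives point-$2$-finiteness at once.
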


This tiling resembles very closely the one by Klee, albeit with two differences. On the one hand, Klee's tiling is disjoint, while on the other one ours is a lattice tiling. At this point it is natural to ask whether the construction by Klee did implicitly produce a lattice tiling, or whether our construction could be improved to give a disjoint tiling. As it turns out, it is an easy observation (\Cref{prop: lattice not disjoint}), that lattice tilings by balls are never disjoint. Hence, our construction has an extra regularity feature that is not present in Klee's one. Further, even though our tiling cannot be disjoint, it is as close to being disjoint as possible because each point belongs to at most $2$ tiles and the only points that could belong to more than one tile are the extreme points of the tiles (\Cref{thm: only vertices touch}).

While the fact that lattice tilings are not disjoint is an easy observation, it admits far-reaching generalisations when one ponders how many tiles must each tile intersect (which is a natural question, in the light of the results in \cites{Breen, DESOVEstar, Nielsen}). We study this question in \Cref{sec: intersections}, where we obtain in particular that if $\T$ is a lattice tiling by balls of an infinite-dimensional normed space, then each tile intersects $|\T|$-many other tiles. This nicely constrasts with the fact that our tilings of $\ell_2(\Gamma)$ and $\ell_1(\Gamma)$ are point-countable and point-$2$-finite respectively: while pointwise the number of intersections is controlled, on each tile the amount of intersections is as large as possible.
\smallskip

We now briefly explain the main strategy behind the proof of \Cref{mainth: ell2 body} and \Cref{mainth: ell1 balls}. Klee's proof in \cite{Klee1} essentially consists in building a subset $\D$ of $\ell_p(\Gamma)$ ($1\leq p<\infty$) that is $(2^{1/p}+)$-separated and $1$-dense. From this point, a standard approach is to consider the Voronoi cells associated to $\D$, which directly give the desired tiling. The approach through Voronoi cells is also present in several other papers, such as \cites{DEGBnormal1, DEJSnormal2, Preiss}. Our strategy consists in improving Klee's construction by building a $2^{1/p}$-separated and $1$-dense \emph{subgroup} of $\ell_p(\Gamma)$ (which, when $p>1$, is also $(2^{1/p}+)$-separated, but not when $p=1$). After having such a subgroup at our disposal, we consider the associated Voronoi cells and the group structure easily yields the lattice property (\Cref{prop: tile by Voronoi}). Therefore, the main ingredient in our argument, and third main result of the paper, is the following.

\begin{mainth}\label{mainth: subgroups}${}$
\begin{enumerate}
    \item\label{mth: subgroup ell_p} Let $\Gamma$  be a cardinal number with $\Gamma^\omega= \Gamma$ and $1\leq p< \infty$. Then, $\ell_p(\Gamma)$ contains a $2^{1/p}$-separated and $1$-dense subgroup. Furthermore, if $p>1$, the subgroup can be chosen to be $(2^{1/p}+)$-separated.
    \item\label{mth: subgroup general} Let $\X$ be an infinite-dimensional normed space. Then for every $\e>0$ there exists a subgroup $\D$ of $\X$ that is $1$-separated and $(1+\e)$-dense. Furthermore, the group is generated by a set of vectors of norm at most $2+\e$.
\end{enumerate}
\end{mainth}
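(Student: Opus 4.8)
The plan is to construct the subgroup by a transfinite recursion that refines Klee's method, the new point being that a \emph{subgroup} (rather than merely a separated and dense set) is produced, from which the lattice property of the tiling follows at once via \Cref{prop: tile by Voronoi}. For the first part, fix an enumeration $\{x_\alpha\}_{\alpha<\Gamma}$ of a dense subset of $\ell_p(\Gamma)$ made of finitely supported rational vectors (there are at most $\Gamma$ of them, since $\Gamma^\omega=\Gamma$ forces $\Gamma\geq\mathfrak c$). Starting from $\D_0=\{0\}$, I would build an increasing chain $(\D_\alpha)_{\alpha<\Gamma}$ of subgroups of $\ell_p(\Gamma)$, taking unions at limit stages, so that for every $\alpha$: (a) $\D_\alpha$ is $2^{1/p}$-separated---equivalently, every non-zero element of $\D_\alpha$ has norm $\geq2^{1/p}$, and $>2^{1/p}$ when $p>1$; (b) the support $S_\alpha:=\bigcup_{d\in\D_\alpha}\supp d$ has cardinality $<\Gamma$; (c) $\dist(x_\beta,\D_{\beta+1})\leq1$ for every $\beta\leq\alpha$. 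Since each $\D_\alpha$ is generated by at most $|\alpha|+\aleph_0$ vectors, the cardinal bookkeeping in (b) goes through (this is where $\Gamma^\omega=\Gamma$ is used) and (a)--(c) pass to unions; hence $\D:=\bigcup_{\alpha<\Gamma}\D_\alpha$ is a $2^{1/p}$-separated subgroup which, by density of $\{x_\alpha\}$ and (c), satisfies $\dist(y,\D)\leq1$ for all $y\in\ell_p(\Gamma)$, i.e.\ it is $1$-dense.

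The content of the recursion is in the successor step. If $\dist(x_\alpha,\D_\alpha)\leq1$, set $\D_{\alpha+1}:=\D_\alpha$. Otherwise, using $|S_\alpha\cup\supp x_\alpha|<\Gamma$, pick a vector $v_\alpha$ supported off $S_\alpha\cup\supp x_\alpha$ with $\|v_\alpha\|_p=1$, and set $d_\alpha:=x_\alpha+v_\alpha$, $\D_{\alpha+1}:=\D_\alpha+\ZZ d_\alpha$ (which is a subgroup, as $\D_\alpha$ is). Then $\dist(x_\alpha,\D_{\alpha+1})\leq\|v_\alpha\|_p=1$, so (c) is maintained. For (a), an arbitrary element of $\D_{\alpha+1}$ equals $d+kd_\alpha=(d+kx_\alpha)+kv_\alpha$ for some $d\in\D_\alpha$, $k\in\ZZ$, and since $\supp v_\alpha$ is disjoint from $\supp(d+kx_\alpha)$ we get
\[
\|d+kd_\alpha\|_p^p=\|d+kx_\alpha\|_p^p+|k|^p .
\]
For $k=0$ and $d\neq0$ this is $\geq2$ by the inductive hypothesis; for $|k|\geq2$ it is $\geq2^p\geq2$; and for $|k|=1$ it is $\geq\dist(x_\alpha,\D_\alpha)^p+1>2$, as we are in the case $\dist(x_\alpha,\D_\alpha)>1$. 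Thus $\D_{\alpha+1}$ is $2^{1/p}$-separated; when $p>1$ every inequality above is strict (as $2^p>2$), yielding the sharper conclusion, whereas for $p=1$ the value $2$ is actually attained when $2x_\alpha\in\D_\alpha$---in accordance with \Cref{prop: lattice not disjoint}, which shows that no improvement is possible.

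For the second part I would run the same recursion in an infinite-dimensional normed space $\X$, along a dense set of cardinality $\dens\X$, targeting a $1$-separated, $(1+\e)$-dense subgroup, and keeping $\D_\alpha$ generated by few (finitely many, when $\X$ is separable) vectors so that $Y_\alpha:=\overline{\spn}(\D_\alpha\cup\{x_\alpha\})$ is always a proper closed subspace. The role of a fresh coordinate and of the additivity of $\|\cdot\|_p$ over disjoint supports is now taken by Riesz's lemma: choose a unit vector $u_\alpha$ with $\dist(u_\alpha,Y_\alpha)>1-\delta$, so that $\|y+ku_\alpha\|\geq(1-\delta)|k|$ for all $y\in Y_\alpha$ and $k\in\ZZ$; with $\delta$ small, the very same case analysis on $|k|$ delivers a $1$-separated extension. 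The one genuinely delicate ingredient is the requirement that $\D$ be generated by vectors of norm $\leq2+\e$: rather than adding $x_\alpha+v_\alpha$, whose norm is uncontrolled, one adds a generator of norm in $(1+\e,2+\e]$ that blends a small fraction of $x_\alpha-d^*$ (with $d^*\in\D_\alpha$ nearest to $x_\alpha$) with the nudge along $u_\alpha$, and reaches $x_\alpha$ through an integer multiple $d^*+kg_\alpha$---and, when $x_\alpha$ is far from $\D_\alpha$, through several mutually almost-orthogonal small steps; the slack $\e$ in the density is exactly what makes this feasible. As this uses neither coordinates nor transfinite intricacies beyond those already present, it is indeed a simplification of the first part, and it solves Swanepoel's problem and reproves the Dilworth--Odell--Schlumprecht--Zs\'ak result.

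I expect the main difficulty to lie, in the first part, in pinning down the constants in the tight case $p=1$ (where there is no room to spare) together with the set-theoretic bookkeeping guaranteeing $|S_\alpha|<\Gamma$ throughout; and, in the second part, in reconciling the bound $\leq2+\e$ on the generators with the bound $\leq1+\e$ on $\dist(x_\alpha,\D_{\alpha+1})$, two demands that pull in opposite directions precisely when $x_\alpha$ is far from the current group.
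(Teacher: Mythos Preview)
Your transfinite construction for \eqref{mth: subgroup ell_p} is essentially the paper's, and the case analysis on $|k|$ is exactly right. There is, however, a genuine gap: you enumerate only a \emph{dense} subset of $\ell_p(\Gamma)$, and from (c) you correctly deduce $\dist(y,\D)\leq 1$ for all $y$, but you then write ``i.e.\ it is $1$-dense''. These are not the same. The paper's definition of $1$-dense requires, for every $y$, an element $d\in\D$ with $\|y-d\|\leq 1$, and the infimum need not be attained. Indeed, the paper notes (in the remark following the proof of \Cref{thm: ell_p subgroup}) that running the construction along a mere dense set yields only $(1+\e)$-density for every $\e>0$, and that this cannot be improved in the separable case: $\ell_2$ admits no $\sqrt{2}$-separated $1$-dense subset, yet it does admit a $\sqrt{2}$-separated subgroup with $\dist(\cdot,\D)\leq 1$. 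The fix is easy and is precisely how the paper proceeds: since $\Gamma^\omega=\Gamma$, the Schmidt--Stone theorem gives $|\ell_p(\Gamma)|=\Gamma$, so one enumerates the \emph{whole} space as $(u_\alpha)_{\alpha<\Gamma}$, and then (c) gives $1$-density directly, with no density-argument needed.

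For \eqref{mth: subgroup general}, your Riesz-lemma construction of a $1$-separated, $(1+\e)$-dense subgroup is exactly the paper's. The place where you struggle---forcing the generators to have norm at most $2+\e$ during the construction via ``blending a small fraction'' and ``several mutually almost-orthogonal small steps''---is where the paper takes a much cleaner route. It does \emph{not} control the generators during the recursion at all. Instead, it proves a separate lemma (\Cref{lem: bdd generator}): every $r$-dense subgroup $\D$ of a normed space is generated by $\D\cap(2r+\e)B_\X$. The proof is a two-line iteration: from $(r+\e)B_\X\subseteq\Lambda+rB_\X$ one gets $(r+n\e)B_\X\subseteq\D_0+rB_\X$ for all $n$, hence $\X=\D_0+rB_\X$, hence $\D=\D_0$. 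So the bounded-generator clause comes for free \emph{a posteriori}, and the tension you anticipate in your last paragraph simply dissolves.
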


Item \eqref{mth: subgroup ell_p} is the above mentioned tool for our tilings, while \eqref{mth: subgroup general} is obtained from a simplification of the argument for \eqref{mth: subgroup ell_p} and connects to several papers concerning discrete subgroups of normed spaces. To begin with, there is an extensive literature concerning discrete subgroups of $\R^n$ that are $r$-dense for some $r>0$. This is usually phrased in different terms, via the notion of simultaneous packing and covering; we shall not enter this and just refer, \emph{e.g.}, to \cites{Rogers_book, Zong1, Zong2} and references therein. In the infinite-dimensional framework, Rogers \cite{Rog84}*{Theorem 2} proved that every Banach space contains, for every $\e>0$, a subgroup which is 1-separated and $(\frac{3}{2}+\e)$-dense; further, it admits a set of generators of norm at most $2+\e$. It was asked in \cite{Swane09}*{Section 6.1} (using the packing terminology) whether the constant $(\frac{3}{2}+\e)$ could be replaced with $(1+\e)$. For separable Banach spaces, this was answered by Dilworth, Odell, Schlumprecht, and Zs\'ak in \cite{DOSZ}*{Theorem 5.5}: every separable Banach space contains, for every $\e>0$, a subset of the unit ball, whose generated group is $1/3$-dense and $(3+\e)^{-1}$-separated. The argument in \cite{DOSZ} depends on the existence of $(1+\e)$-bounded M-bases, hence it cannot be extended to the non-separable context.

Therefore, \Cref{mainth: subgroups}\eqref{mth: subgroup general} improves Roger's result by answering \cites{Swane09} for all normed spaces. Further, we have a different proof of the result in \cites{DOSZ}, that also has the advantage of applying to all normed spaces and of being much shorter. Incidentally, if we rescale the generators of our group to the unit ball, we obtain a $1/2$-dense and $(2+\e)^{-1}$-separated subgroup; hence, we also improve their factor $1/3$.
\smallskip

In conclusion to this section, let us briefly explain the structure of our paper. \Cref{sec: prelim} revises the relevant definitions from the theory of tilings and Voronoi cells. \Cref{sec: main tile} contains our main results: we prove \Cref{mainth: subgroups}\eqref{mth: subgroup ell_p}, we deduce \Cref{mainth: ell2 body} and \Cref{mainth: ell1 balls}, and study more properties of such tilings. In \Cref{sec: intersections} we show that each tile in a lattice tiling by balls intersects `many' other tiles. \Cref{sec: subgroup} is dedicated to discrete subgroups of normed spaces; we prove \Cref{mainth: subgroups}\eqref{mth: subgroup general} and we also give an alternative self-contained proof of Stepr\={a}ns' result \cite{Steprans} that discrete subgroups of normed spaces are free. Finally, we collect or reiterate some problems that arise from our research in \Cref{sec: problems}.

\section{Preliminaries}\label{sec: prelim}

Throughout the paper we only consider infinite-dimensional normed spaces over the real field, even when not explicitly assumed. We denote by $B_\X$ the closed unit ball of a normed space $\X$; in case an equivalent norm $\nn\cdot$ is introduced on $\X$, the unit ball in the new norm is indicated by $B_{\nn\cdot}$ and similarly for the unit sphere $S_{\nn\cdot}$. We also write $B_r(x)$ or $x+ rB_\X$ for the closed ball centred at $x$ and with radius $r$. For $x,y\in \X$, $[x,y]$ denotes the closed segment in $\X$ with endpoints $x$ and $y$, and $(x,y)=[x,y]\setminus\{x,y\}$.

The cardinality of a set $S$ is indicated by $|S|$. We adopt the convention to regard cardinal numbers as initial ordinal numbers; hence, we write $\omega$ for the cardinal $\aleph_0$. We also denote by $\mathfrak{c}$ the cardinality of continuum. By $\N$ we denote the set of strictly positive integers. Given a cardinal $\Gamma$, by $\Gamma^\omega$ we understand cardinal exponentiation, namely the cardinality of all sequences with values in $\Gamma$. Let us recall the following classical theorem of Schmidt \cite{Schmidt} and Stone \cite{Stone} (see, \emph{e.g.}, \cite{Hodel}*{Section 8}): if every non-empty open set in a complete metric space $\mathcal{M}$ has weight $\Gamma$, then $|\mathcal{M}|= \Gamma^\omega$. In particular, our ubiquitous assumption that $\Gamma^\omega= \Gamma$ is made in order to assure that $|\ell_p(\Gamma)|= \Gamma$, for every $1\leq p< \infty$. Notice that under the generalised continuum hypothesis, $\Gamma^\omega= \Gamma$ is satisfied if and only if $\Gamma$ has uncountable cofinality, \cite{Jech}*{Theorem 5.15}. We also point out that a similar statement can be found in \cite{DEVEtiling}*{Remark~5.5}, but stated improperly. 
\smallskip

We now recall the notions concerning tilings that we consider in our article. A \emph{body} in a normed space is a set that is the closure of its non-empty interior. In particular, a \emph{convex body} is a closed convex set with non-empty interior. A \emph{tiling} of a normed space $\X$ is a family of bodies that cover $\X$ and have mutually disjoint interiors (sometimes it is said that the bodies are \emph{non-overlapping}). Our only interest in most of the paper (with the only exception of \Cref{sec: p not 2}) is on tilings by symmetric and bounded convex bodies, \emph{i.e.}, balls of equivalent norms. Furthermore, our tilings will be obtained by translating a single tile under the action of a subgroup.

\begin{definition}\label{def: lattice} A tiling $\T$ of a normed space $\X$ is a \emph{lattice tiling} if there are a body $B$ and a (discrete) subgroup $\D$ of $\X$ such that $\T= \{d+ B\colon d\in \D \}$. In the case when $B$ is the unit ball $B_\X$, we say that $\T$ is a \emph{lattice tiling by balls}.
\end{definition}

Notice that if a lattice tiling $\T$ has the form $\{d+ B\colon d\in \D \}$, where $B=B_r(x)$ is a ball, then we can translate and rescale $B_r(x)$ to $B_\X$ and consider instead the tiling $\{d/r+ B_\X\colon d\in \D \}$. Therefore, our definition of lattice tiling by balls is essentially equivalent to requiring that the body $B$ is a ball.

Besides this regularity property on the disposition of the tiles, we will consider two more types of properties of a tiling: one obtained by requiring stronger geometric assumption on the tiles and one requiring some control on the intersections. For example, we will consider tilings with or rotund, or smooth, convex bodies. Concerning intersection properties of a tiling, we will make use of the following notions, that we recall for a general family.

\begin{definition}\label{D: finiteness} A family $\mathcal F$ of subsets of a normed space $\X$ is said to be:
\begin{enumerate}
\item {\em star-finite} if each of its members intersects only finitely many members of $\mathcal F$;
\item {\em point-finite} (respectively, {\em point-countable}) if each $x\in \X$ belongs to at most finitely many (respectively, countably many) members of $\mathcal F$;
\item {\em locally finite} if each $x\in \X$ admits a neighbourhood that intersects finitely many elements of $\mathcal{F}$.
\end{enumerate}
\end{definition}

The notion of point-finiteness might also be quantified. For $n\in \N$, we say that a family $\mathcal F$ is \emph{point-$n$-finite} if every point of $\X$ belongs to at most $n$ elements of the family $\mathcal F$. This notion is called $n$-finite in \cite{DESOVEstar}, but we prefer the name point-$n$-finite since one might also consider star-$n$-finite, or locally $n$-finite families.

We will also need the notion of singular point for a tiling (or more generally, for a family of subsets). A point $x\in \X$ is a \emph{singular point} for a family $\mathcal{F}$ if every neighbourhood of $x$ intersects infinitely many elements of $\mathcal{F}$; a point is \emph{regular} when it is not singular. Clearly, the family $\mathcal{F}$ is locally finite if and only if it doesn't admit any singular point. Similarly, for a cardinal $\Gamma$ a \emph{$\Gamma$-singular point} is a point every whose neighbourhood intersects $\Gamma$-many elements of $\mathcal{F}$.
\smallskip

Let $\D$ be a subset of a normed space $\X$ and $r>0$. The set $\D$ is \emph{$r$-separated} (respectively, \emph{$(r+)$-separated}) if $\|d- h\|\geq r$ (respectively, $\|d- h\|> r$) for any distinct elements $d,h\in \D$. $\D$ is \emph{$r$-dense} if for every $x\in \X$ there is $d\in \D$ such that $\|x-d\|\leq r$. The set $\D$ is \emph{proximinal} if for every $x\in \X$ there exists $d\in \D$ such that $\|x-d\|= \dist(x,\D)$. For each $d\in \D$, we define the \emph{Voronoi cell} $V_d$ by
\[ V_d\coloneqq \{x\in \X\colon \|x-d\|\leq \|x-h\|, \mbox{ for all } h\in \D\}. \]
When $\D$ is a subgroup, being $r$-separated, for some $r>0$, is plainly equivalent to being discrete and we shall use both terminologies in the paper.

As we mentioned in the Introduction, a well-known method to produce tilings of normed spaces is to construct a suitable set $\D$ and consider the associated Voronoi cells. In the following folklore proposition we summarise the properties of $\D$ needed for the Voronoi cells $\{V_d\}_{d\in \D}$ to be a tiling and some properties of this tiling. Most clauses in the proposition can be found in \cite{Klee2}*{Theorem 3.1}, but we sketch part of the proof for the sake of completeness.

\begin{proposition}\label{prop: tile by Voronoi} Let $\X$ be a normed space and $\D\subseteq \X$ such that:
\begin{enumerate}
    \item\label{i: prox} $\D$ is proximinal;
    \item\label{i: subgroup} $\D$ is a subgroup of $\X$;
    \item\label{i: separated} $\D$ is $R$-separated and $r$-dense.
\end{enumerate}
Then the associated Voronoi cells  $\{V_d\}_{d\in \D}$ satisfy the following:
\begin{enumerate}[label=\normalfont{(\arabic*)},ref=\arabic*]
    \item\label{i: renorming} $V_0$ is a closed, symmetric, starshaped set such that
    \begin{equation}\label{eq: equiv norm}
        \frac{R}{2} B_\X\subseteq V_0 \subseteq r B_\X.
    \end{equation}
    \item\label{i: invariant} For each $d\in \D$, $V_d=d+ V_0$.
    \item\label{i: covering} $\{V_d\}_{d\in \D}$ is a covering of $\X$.
\end{enumerate}
Further, if $\X$ is LUR, then each $V_d$ is the closure of its interior (hence, a body) and $\{V_d\}_{d\in \D}$ is a tiling. Finally, if $\X$ is a Hilbert space, each $V_d$ is convex.
\end{proposition}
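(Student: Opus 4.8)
The plan is to check the three numbered assertions first --- none of them uses rotundity --- and then the two concluding claims. The translation-invariance \eqref{i: invariant} is where the group structure enters: since $g\mapsto g+d$ is a bijection of $\D$ for each $d\in\D$, the requirement ``$\|(x-d)-g\|\geq\|x-d\|$ for all $g\in\D$'' characterises simultaneously the memberships $x\in V_d$ and $x-d\in V_0$, so $V_d=d+V_0$. For \eqref{i: renorming}: $V_0=\bigcap_{h\in\D}\{x\in\X\colon\|x\|\leq\|x-h\|\}$ is an intersection of closed sets, hence closed; it is symmetric because $\D=-\D$; and it is starshaped with respect to $0$ since, for $x\in V_0$, $t\in[0,1]$ and $h\in\D$, the identity $tx-h=(x-h)-(1-t)x$ together with $\|x-h\|\geq\|x\|$ gives $\|tx-h\|\geq\|x-h\|-(1-t)\|x\|\geq t\|x\|=\|tx\|$. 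The inclusion $\tfrac{R}{2}B_\X\subseteq V_0$ follows because $\|h\|\geq R$ for every $h\in\D\setminus\{0\}$, so that $\|x-h\|\geq R/2\geq\|x\|$ when $\|x\|\leq R/2$; while $V_0\subseteq rB_\X$ holds because $x\in V_0$ forces $\|x\|=\dist(x,\D)\leq r$ by $r$-density. Finally, \eqref{i: covering} is immediate from proximinality: each $x\in\X$ has a nearest point $d\in\D$, and then $x\in V_d$.

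For the tiling assertion I would first show that in a strictly convex space (in particular, when $\X$ is LUR) the interiors $\operatorname{int}V_d$ are pairwise disjoint. Indeed, a ball contained in $V_d\cap V_{d'}$ with $d\neq d'$ would be contained in the bisector $\{x\in\X\colon\|x-d\|=\|x-d'\|\}$, so it suffices to prove that such a bisector has empty interior; assuming $B_\delta(x_0)$ lies in it, with $\rho\coloneqq\|x_0-d\|=\|x_0-d'\|>0$, I set $x_1\coloneqq x_0+s(d-x_0)$ for $s>0$ small enough that $x_1\in B_\delta(x_0)$, so that $\|x_1-d'\|=\|x_1-d\|=(1-s)\rho=\|x_0-d'\|-\|x_1-x_0\|$; equality in this reverse triangle inequality forces, by strict convexity, $x_0-d$ and $x_0-d'$ to be positive multiples of one another, hence, having equal norms, equal --- contradicting $d\neq d'$. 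Together with \eqref{i: covering} and the body property discussed below, this shows that $\{V_d\}_{d\in\D}$ is a tiling. The convexity of each $V_d$ in a Hilbert space is easier: there $\{x\colon\|x\|\leq\|x-h\|\}$ coincides with the half-space $\{x\colon2\langle x,h\rangle\leq\|h\|^2\}$, so $V_0$ is an intersection of half-spaces, hence convex, and $V_d=d+V_0$.

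The delicate point, and the one I expect to be the main obstacle, is to prove that when $\X$ is LUR each $V_0$ (equivalently, by \eqref{i: invariant}, each $V_d$) is the closure of its interior. Since $0\in\operatorname{int}V_0$ by the first inclusion in \eqref{eq: equiv norm} and $V_0$ is closed, it is enough to show that $tx\in\operatorname{int}V_0$ whenever $x\in V_0$ and $0<t<1$, and then let $t\to1^-$. From the identity above one gets $\|tx-h\|>\|tx\|$ for each $h\in\D\setminus\{0\}$; the issue --- $\D$ being in general far from locally finite --- is to upgrade this to $\inf_{h\in\D\setminus\{0\}}\|tx-h\|>\|tx\|$, which does place a ball around $tx$ inside $V_0$. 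I would argue by contradiction: a sequence $(h_n)\subseteq\D\setminus\{0\}$ with $\|tx-h_n\|\to\|tx\|$ is bounded and, writing $x-h_n=(tx-h_n)+(1-t)x$, satisfies $\|x-h_n\|\to\|x\|$; a routine convexity estimate then shows that the midpoints $\tfrac12\bigl(\tfrac{x-h_n}{\|x-h_n\|}+\tfrac{x}{\|x\|}\bigr)$ have norm tending to $1$, so the LUR property of $\X$ at the point $x/\|x\|$ of the unit sphere forces $x-h_n\to x$, i.e. $h_n\to0$ --- impossible, since $\D$ is $R$-separated.
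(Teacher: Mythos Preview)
Your proof is correct and, for the numbered items and the Hilbert-space convexity, follows the same line as the paper (you in fact supply more detail than the paper, which merely says that closedness and starshapedness are ``easy to check'' and writes out the half-space description for the Hilbert case). The genuine difference lies in the LUR clause: the paper does \emph{not} prove that each $V_d$ is a body nor that the interiors are pairwise disjoint, but simply refers to \cite{Klee2}*{Theorem~3.1}. Your argument for both points is self-contained and sound; in particular, your contradiction argument for $\inf_{h\neq 0}\|tx-h\|>\|tx\|$ via LUR is exactly the kind of proof one would reconstruct from Klee's reference.

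Two small remarks on presentation. First, the strict inequality $\|tx-h\|>\|tx\|$ for each individual $h\neq 0$ does not follow from the starshapedness identity alone (that only gives $\geq$); it needs strict convexity, which of course LUR implies, and your later contradiction argument applied to the constant sequence $h_n=h$ yields it anyway. Second, the ``routine convexity estimate'' is correct but perhaps worth one extra sentence: the quickest route is to note that $s\mapsto\|sx-h_n\|$ is convex with values at $s=t$ and $s=1$ tending to $t\|x\|$ and $\|x\|$, so by convexity $\|2x-h_n\|\geq \|x-h_n\|+\dfrac{\|x-h_n\|-\|tx-h_n\|}{1-t}\to 2\|x\|$, which together with $\|x-h_n\|\to\|x\|$ gives the midpoint norm tending to $1$.
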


\begin{proof} The fact that $\{V_d\}_{d\in \D}$ is a covering follows immediately from \eqref{i: prox}. In fact, for a fixed $x\in \X$, \eqref{i: prox} yields the existence of $d\in \D$ such that $\|x-d\|\leq \|x-h\|$, for all $h\in \D$. Hence, $x\in V_d$. That each $V_d$ is closed and starshaped with respect to $d$ is easy to check. Likewise, if $\X$ is a Hilbert space, $V_d$ is the intersection of the half-spaces
\[ \left\{x\in \X\colon \langle x, h-d\rangle\leq \tfrac{1}{2}\big(\|h\|^2 - \|d\|^2\big)\right\} \]
for $h\in \D$ and it is therefore convex. The clause concerning LUR spaces is less immediate and we refer to \cite{Klee2}*{Theorem 3.1}. We now show how \eqref{i: renorming} and \eqref{i: invariant} follow from \eqref{i: subgroup} and \eqref{i: separated}.

If $\D$ is a subgroup and $d\in \D$, we have $\D-d= \D$, hence
\begin{align*}
    V_d \coloneqq& \{x\in \X\colon \|x-d\|\leq \|x-h\|, \mbox{ for all } h\in \D\}\\
    =& \{x+d\in \X\colon \|x\|\leq \|x-(h-d)\|, \mbox{ for all } h\in \D\}\\
    =& \{x+d\in \X\colon \|x\|\leq \|x-h\|, \mbox{ for all } h\in \D\}= d+V_0.
\end{align*}
Similarly, $\D= -\D$ implies that $V_0$ is symmetric.

Finally, we use \eqref{i: separated} to show \eqref{eq: equiv norm}. Take $x\in \X$ with $\|x\|\leq R/2$. Since $\D$ is $R$-separated and $0\in \D$, $\|d\|\geq R$ for every non-zero $d\in \D$. Thus, $\|x-d\|\geq R/2\geq \|x\|$ for every non-zero $d\in \D$, which implies that $x\in V_0$. For the second inclusion, take $x\in \X$ with $\|x\|>r$. By definition, there exists $d\in \D$ such that $\|x-d\|\leq r$; hence, $\|x-d\|< \|x\|$, which implies that $x\notin V_0$.
\end{proof}

In our argument the set $\D$ will be constructed by transfinite induction and conditions \eqref{i: subgroup} and \eqref{i: separated} in \Cref{prop: tile by Voronoi} will be preserved at each step of the induction. On the other hand, the proximinality of $\D$ is a global condition, which is much harder to achieve by induction. In our context, we shall obtain proximinality by means of the following lemma, also due to Klee \cite{Klee1}*{Remark 2.3}, involving the Kottman constant of a normed space. Recall that the \emph{Kottman constant} $K(\X)$ \cite{Kottman} of a normed space $\X$ is
\[ K(\X)\coloneqq \sup\big\{r>0\colon B_\X \mbox{ contains an } r\mbox{-separated sequence} \big\}. \]
An elementary computation, essentially based on a sliding bump argument, shows that $K(\ell_p(\Gamma))= 2^{1/p}$, for every $1\leq p<\infty$ and every infinite set $\Gamma$, \cites{BRR, Kottman_TAMS}. Further, by James' distortion  theorem, $K(\X)=2$ for every Banach space that contains an isomorphic copy of $c_0$, or $\ell_1$. For recent results on Kottman's constant we refer to \cites{CGKP, CGP, HKR_JFA, R_RACSAM} and references therein.

\begin{lemma}[Klee, \cite{Klee1}]\label{lem: K(X) proximinal} Every $K(\X)$-separated and $1$-dense subset of a normed space $\X$ is proximinal.
\end{lemma}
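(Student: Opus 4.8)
The plan is to fix $x\in\X$, set $\delta\coloneqq\dist(x,\D)$, and produce a point of $\D$ realising this distance. Since $\D$ is $1$-dense we have $\delta\leq 1$, and the argument splits according to whether $\delta=1$ or $\delta<1$.

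The case $\delta=1$ is immediate: by $1$-density there is $d\in\D$ with $\|x-d\|\leq 1$, and since $\|x-d\|\geq\dist(x,\D)=1$ we get $\|x-d\|=1=\delta$, so $d$ realises the distance.

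Assume now $\delta<1$ and fix $\e>0$ with $\delta+\e<1$. I would consider the set $A\coloneqq\{d\in\D\colon \|x-d\|\leq \delta+\e\}$, which is non-empty because $\delta$ is an infimum. The key claim is that $A$ is finite. Indeed, if $A$ contained infinitely many (necessarily distinct) points $d_1,d_2,\dots$, then the vectors $(d_n-x)/(\delta+\e)$ would all lie in $B_\X$ and, since $\D$ is $K(\X)$-separated, any two of them would be at distance at least $K(\X)/(\delta+\e)>K(\X)$ from one another; this contradicts the definition of $K(\X)$ as the supremum of the separation constants of sequences in $B_\X$. Hence $A$ is finite. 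Finally, since every $d\in\D\setminus A$ satisfies $\|x-d\|>\delta+\e>\delta$, the infimum defining $\delta$ is already attained as an infimum over $A$, i.e.\ $\dist(x,A)=\delta$; as $A$ is finite and non-empty, this infimum is a minimum, so some $d\in A$ satisfies $\|x-d\|=\delta=\dist(x,\D)$, which is the desired proximinality.

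There is no serious obstacle here; the only points requiring a little care are the rescaling that reduces the finiteness of $A$ to the very definition of the Kottman constant, and the observation that this rescaling argument breaks down precisely when $\delta=1$ — which is why that borderline value has to be, and can be, disposed of separately and trivially using only $1$-density. (One could also note in passing that a $K(\X)$-separated set is automatically closed, since $K(\X)>0$ for infinite-dimensional $\X$, so the subcase $\delta=0$ is trivial; but it is in any case subsumed by the finiteness argument above.)
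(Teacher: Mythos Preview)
Your proof is correct and follows essentially the same approach as the paper's: both dispose of the borderline case (the paper phrases it as ``no $h\in\D$ with $\|x-h\|<1$'', you as $\delta=1$) and then, in the remaining case, use the identical rescaling argument to show that a ball of radius strictly less than $1$ around $x$ can contain only finitely many points of the $K(\X)$-separated set $\D$. The only cosmetic difference is that the paper works with a concrete radius $r=\|x-d\|<1$ coming from a specific nearby point, whereas you work with $\delta+\e$; the substance is the same.
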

\begin{proof} Let $\D$ be a $K(\X)$-separated and $1$-dense subset of $\X$ and take any $x\in \X$. By definition, there exists some $d\in \D$ such that $\|x-d\|\leq 1$. If there is no $h\in \D$ such that $\|x-h\|<1$, then clearly the element $d$ has minimal distance from $x$. Therefore, we can assume without loss of generality that there is $d\in \D$ such that $r\coloneqq \|x-d\|<1$. Thus, $B_r(x)\cap \D$ is non-empty. Since $r<1$, the set $B_r(x)$ cannot contain an infinite $K(\X)$-separated set, because otherwise $B_\X$ would contain an infinite $K(\X)/r$-separated set, contrary to the definition of $K(\X)$. Therefore, $B_r(x)\cap \D$ is a finite set and, hence, there is a point in $B_r(x)\cap \D$ that has minimal distance from $x$.
\end{proof}

\section{Discrete subgroups and lattice tilings in \texorpdfstring{$\ell_p(\Gamma)$}{ellp(Gamma)}}\label{sec: main tile}
The purpose of this section is to prove our main results concerning tilings (\Cref{mainth: ell2 body} and \Cref{mainth: ell1 balls}). As we explained in the Introduction, the main part of the argument will consist in the construction of a certain subgroup of $\ell_p(\Gamma)$, that we perform in the first part of the section. Therefore, this section also contains the proof of \Cref{mainth: subgroups}\eqref{mth: subgroup ell_p}. In the second part of the section we combine this result with the facts concerning Voronoi cells that we recalled in \Cref{sec: prelim} and we obtain the desired tilings of $\ell_2(\Gamma)$ and $\ell_1(\Gamma)$. We also study some additional properties of such tilings.

\begin{theorem}\label{thm: ell_p subgroup} Let $\Gamma$  be a cardinal number with $\Gamma^\omega= \Gamma$ and $1\leq p< \infty$. Then, $\ell_p(\Gamma)$ contains a $2^{1/p}$-separated and $1$-dense subgroup.

Furthermore, if $p>1$, the subgroup can be chosen to be $(2^{1/p}+)$-separated.
\end{theorem}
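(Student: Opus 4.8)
The plan is to build the subgroup $\D$ by transfinite induction, refining Klee's strategy so as to obtain a genuine group rather than a mere separated set. Two preliminary reductions make the argument manageable. First, note that $\Gamma$ is necessarily uncountable, since $\omega^\omega = \mathfrak{c} > \omega$; hence $|\alpha| + \omega < \Gamma$ for every $\alpha < \Gamma$, which is exactly what is needed for the support bookkeeping below. Second, it is enough to arrange that $\dist(x, \D) \leq 1$ for every $x$ in some fixed dense subset of $\ell_p(\Gamma)$: indeed $x \mapsto \dist(x, \D)$ is $1$-Lipschitz, so if it is bounded by $1$ on a dense set it is bounded by $1$ everywhere. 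As the dense set I will take $\mathcal{Q}$, the collection of finitely supported vectors with rational coordinates, which has cardinality $\Gamma$; fix an enumeration $\mathcal{Q} = \{x_\alpha \colon \alpha < \Gamma\}$.

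I will then construct an increasing chain $(\D_\alpha)_{\alpha \leq \Gamma}$ of subgroups of $\ell_p(\Gamma)$, maintaining the invariants that $|\D_\alpha| < \Gamma$, that $\D_\alpha$ is $2^{1/p}$-separated (and $(2^{1/p}+)$-separated when $p > 1$), and that $\dist(x_\beta, \D_{\beta+1}) \leq 1$ for all $\beta < \alpha$. Set $\D_0 = \{0\}$ and take unions at limit stages; all three invariants pass to increasing unions of chains (separation is a pairwise condition, and a finitely supported element of $\D_\lambda$ lies in some $\D_\alpha$). At a successor stage, if $\dist(x_\alpha, \D_\alpha) \leq 1$ simply set $\D_{\alpha+1} = \D_\alpha$. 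Otherwise, since $\supp x_\alpha$ is finite and $\bigcup_{d\in \D_\alpha}\supp d$ has size $< \Gamma$, pick a coordinate $\gamma_\alpha \in \Gamma$ outside both, put $w_\alpha \coloneqq x_\alpha + e_{\gamma_\alpha}$, and let $\D_{\alpha+1} \coloneqq \D_\alpha + \ZZ w_\alpha$, which is again a subgroup. The density requirement is then immediate, since $\|x_\alpha - w_\alpha\|_p = \|e_{\gamma_\alpha}\|_p = 1$.

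The crux is to verify that $\D_{\alpha+1}$ remains $2^{1/p}$-separated. Since the elements of $\D_{\alpha+1}$ have the form $d + kw_\alpha$ with $d \in \D_\alpha$ and $k \in \ZZ$, and distances between old elements are unchanged, it suffices to prove $\|e + kw_\alpha\|_p \geq 2^{1/p}$ for all $e \in \D_\alpha$ and $k \in \ZZ$ with $(e,k) \neq (0,0)$. For $k = 0$ this is the inductive hypothesis. For $k \neq 0$, the coordinate $\gamma_\alpha$ avoids the supports of both $e$ and $x_\alpha$, so the $\ell_p$ norm splits as $\|e + kw_\alpha\|_p^p = \|e + kx_\alpha\|_p^p + |k|^p$. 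If $|k| \geq 2$, then $\|e + kw_\alpha\|_p^p \geq |k|^p \geq 2^p \geq 2$. If $|k| = 1$, then since $\D_\alpha$ is symmetric, $\|e + kx_\alpha\|_p \geq \dist(kx_\alpha, \D_\alpha) = \dist(x_\alpha, \D_\alpha) > 1$, whence $\|e + kw_\alpha\|_p^p > 1 + 1 = 2$. In either case $\|e + kw_\alpha\|_p \geq 2^{1/p}$, and the inequality is strict whenever $|k| = 1$, and also whenever $p > 1$ (since then $2^p > 2$), which yields the $(2^{1/p}+)$-separation in that case. Putting $\D \coloneqq \D_\Gamma$ completes the construction.

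I expect the one genuinely delicate point to be precisely the case split in the third paragraph. A single unit bump $e_{\gamma_\alpha}$ is simultaneously \emph{too short} to keep $w_\alpha$ at distance $\geq 2^{1/p}$ from nearby translates $d + kw_\alpha$, and \emph{just long enough} to guarantee $1$-density; the resolution is that for $|k| = 1$ the shortfall is absorbed by the hypothesis $\dist(x_\alpha, \D_\alpha) > 1$ (which is why one acts only when the distance strictly exceeds $1$), while for $|k| \geq 2$ it is absorbed by the trivial inequality $2^p \geq 2$. Everything else — the reduction to a dense set, the cardinality bookkeeping (which uses only that $\Gamma$ is uncountable), the additivity of the $p$-th power of the norm over disjoint supports, and the passage to limit stages — is routine.
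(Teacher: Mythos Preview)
Your separation argument and the inductive construction are essentially identical to the paper's, but there is a genuine gap in your density claim. You assert that it suffices to arrange $\dist(x,\D)\leq 1$ for every $x$ in a dense set $\mathcal{Q}$, since the distance function is $1$-Lipschitz and the bound then propagates to all of $\ell_p(\Gamma)$. That Lipschitz step is correct, but it only yields $\dist(x,\D)\leq 1$ for every $x$, which is strictly weaker than $1$-density in the paper's sense (for every $x$ there \emph{exists} $d\in\D$ with $\|x-d\|\leq 1$): the infimum need not be attained. The paper explicitly addresses this distinction in the remark following the theorem, noting that running the very same construction over a dense set yields only $(1+\e)$-density for all $\e>0$, and that in the separable case this genuinely cannot be improved to $1$-density. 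In particular, your argument nowhere uses the full hypothesis $\Gamma^\omega=\Gamma$; you use only that $\Gamma$ is uncountable.

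The hypothesis $\Gamma^\omega=\Gamma$ is exactly what lets one avoid this problem: by the Schmidt--Stone theorem it guarantees $|\ell_p(\Gamma)|=\Gamma$, so one can enumerate \emph{all} of $\ell_p(\Gamma)$, not just a dense subset, as $(x_\alpha)_{\alpha<\Gamma}$. Each vector still has countable support, so your support bookkeeping goes through unchanged (with $|\D_\alpha|\leq\max\{|\alpha|,\omega\}<\Gamma$), and now every $x\in\ell_p(\Gamma)$ is literally some $x_\alpha$, whence the element $w_\alpha$ (or an earlier one) witnesses $\|x-d\|\leq 1$ directly. With this single change your proof becomes the paper's.
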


We will see below (\Cref{rmk: p=1 not strict} and \Cref{prop: lattice not disjoint}) that the assumption that $p>1$ is essential in the last part of the theorem.

\begin{proof} Fix a cardinal number $\Gamma$ such that $\Gamma^\omega= \Gamma$ and $1\leq p<\infty$. The result of Schmidt and Stone that we recalled in \Cref{sec: prelim} ensures us that the cardinality of $\ell_p(\Gamma)$ equals $\Gamma$. Therefore, we can find an injective enumeration $(u_\alpha)_{\alpha< \Gamma}$ with $u_0=0$ and such that $\ell_p(\Gamma)= \{u_\alpha\}_{\alpha< \Gamma}$. We shall now build by transfinite induction an increasing chain $(\D_\alpha)_{\alpha< \Gamma}$ of subgroups of $\ell_p(\Gamma)$ with the properties that, for all $\alpha<\Gamma$,
\begin{enumerate}
    \item\label{3.1.i: septd} $\D_\alpha$ is $2$-separated for $p=1$ and $(2^{1/p}+)$-separated for $p\in (1,\infty)$;
    \item\label{3.1.ii: card} $|\D_\alpha|\leq \max \{|\alpha|,\omega\}$;
    \item\label{3.1.iii: close} there exists $d\in \D_\alpha$ with $\|u_\alpha- d\|_p \leq 1$.
\end{enumerate}
Once such a chain has been constructed, the desired subgroup is just the union of the chain, $\D\coloneqq \bigcup_{\alpha<\Gamma} \D_\alpha$. Indeed, the fact that the chain $(\D_\alpha)_{\alpha< \Gamma}$ is increasing clearly implies that $\D$ is a subgroup of $\ell_p(\Gamma)$. By the same reason, \eqref{3.1.i: septd} implies that $\D$ is $2$-separated for $p=1$ and $(2^{1/p}+)$-separated for $p\in (1,\infty)$. Furthermore, condition \eqref{3.1.iii: close} yields that $\D$ is $1$-dense in $\ell_p(\Gamma)$. In fact, for every $x\in \ell_p(\Gamma)$ there is some $\alpha<\Gamma$ with $x= u_\alpha$; thus, by \eqref{3.1.iii: close}, there is $d\in \D_\alpha\subseteq \D$ with $\|x-d\|_p \leq 1$, as desired.
\smallskip

Consequently, we only have to construct a chain $(\D_\alpha)_{\alpha< \Gamma}$ as above. We set $\D_0=\{0\}$; \eqref{3.1.i: septd} and \eqref{3.1.ii: card} are obviously satisfied and so is \eqref{3.1.iii: close}, because $u_0=0$. By transfinite induction, suppose that, for some $\gamma< \Gamma$, we have already built an increasing family $(\D_\alpha)_{\alpha< \gamma}$ of subgroups of $\ell_p(\Gamma)$ such that \eqref{3.1.i: septd}--\eqref{3.1.iii: close} hold. We now distinguish two cases. If there is $d\in \bigcup_{\alpha<\gamma} \D_\alpha$ such that $\|u_\gamma- d\|_p \leq 1$, then we just set $\D_\gamma\coloneqq \bigcup_{\alpha<\gamma} \D_\alpha$. In fact, arguing as before, we see that $\D_\gamma$ is a subgroup and \eqref{3.1.i: septd} holds; the validity of \eqref{3.1.ii: card} is also clear. Finally, \eqref{3.1.iii: close} holds by our assumption.

In other words, we may (and do) assume without loss of generality that $\|u_\gamma- d\|_p >1$ for all $d\in \bigcup_{\alpha<\gamma} \D_\alpha$. Consider now the set
\[ \Gamma_0\coloneqq \bigcup \Big\{ \supp(d)\colon d\in \bigcup_{\alpha<\gamma} \D_\alpha \Big\} \cup \supp(u_\gamma). \]
According to \eqref{3.1.ii: card}, $|\bigcup_{\alpha<\gamma} \D_\alpha|\leq \max\{|\gamma|,\omega\}$; further, the support of every vector in $\ell_p(\Gamma)$ is countable. Therefore, $|\Gamma_0|\leq \max\{|\gamma|,\omega\}< \Gamma$; thus, we can find an index $\tilde{\gamma}\in \Gamma\setminus \Gamma_0$. We are now in position to define the subgroup $\D_\gamma$:
\[ \D_\gamma\coloneqq \bigcup_{\alpha<\gamma} \D_\alpha+ (u_\gamma+ e_{\tilde{\gamma}})\ZZ. \]

Plainly, $\D_\gamma$ is a subgroup of $\ell_p(\Gamma)$ that contains each $\D_\alpha$, $\alpha<\gamma$; further, $|\D_\gamma|\leq \max\{|\gamma|,\omega\}$. Moreover, by definition, the vector $u_\gamma+ e_{\tilde{\gamma}}$ belongs to $\D_\gamma$ and its distance from $u_\gamma$ equals $1$; thus, \eqref{3.1.iii: close} holds. Hence, in order to complete the proof it is enough to verify that \eqref{3.1.i: septd} holds. As $\D_\gamma$ is a subgroup, this amounts to taking any non-zero $x\in \D_\gamma$ and proving that $\|x\|_1\geq 2$ and $\|x\|_p> 2^{1/p}$ when $p>1$. Let us then write $x= d+ n(u_\gamma+ e_{\tilde{\gamma}})$, where $d\in \bigcup_{\alpha<\gamma} \D_\alpha$ and $n\in \ZZ$; notice that
\begin{equation}\label{eq: split norm}
    \|x\|_p^p= \big\|d+ nu_\gamma\big\|_p^p+ |n|^p,
\end{equation}
because the support of $d+ nu_\gamma$ is contained in $\Gamma_0$, while $\tilde{\gamma}\notin \Gamma_0$. We now have to distinguish three different cases, depending on the value of $|n|$.
\begin{itemize}
    \item If $n= 0$, then $\|x\|_p= \|d\|_p$. By \eqref{3.1.i: septd} and the transfinite induction assumption, the set $\bigcup_{\alpha<\gamma} \D_\alpha$ is $2$-separated when $p=1$ and $(2^{1/p}+)$-separated when $p>1$. Therefore, $\|x\|_1\geq 2$ and $\|x\|_p> 2^{1/p}$ for $p>1$.
    \item Instead, if $|n|\geq 2$, we clearly have $\|x\|_p\geq |n|\geq 2$. Thus, $\|x\|_1\geq 2$ and $\|x\|_p\geq2> 2^{1/p}$ for $p>1$.
    \item Finally, if $|n|=1$ we have
    \[ \|x\|_p^p= \|\pm d- u_\gamma\|_p^p +1> 2, \]
    because $\pm d\in \bigcup_{\alpha<\gamma} \D_\alpha$ and, by our previous assumption, $\|d- u_\gamma\|_p>1$ for each $d\in \bigcup_{\alpha<\gamma} \D_\alpha$. Thus, $\|x\|_p>2^{1/p}$ for each $1\leq p <\infty$.
\end{itemize}
Therefore, in each case the set $\D_\gamma$ also satisfies \eqref{3.1.i: septd}, which concludes the proof.
\end{proof}

\begin{remark}\label{rmk: p=1 not strict} Observe that in the second case of the previous proof we do not obtain a strict inequality for the separation when $p=1$. Thus, even if we assumed by transfinite induction that each $\D_\alpha$, $\alpha<\gamma$, is $(2+)$-separated, it would not follow that $\D_\gamma$ is $(2+)$-separated. As we will see in \Cref{prop: lattice not disjoint}, this is not a drawback of our method of proof: no normed space can admit a $(2+)$-separated and $1$-dense subgroup. 
\end{remark}

\begin{remark} Suppose now that $\Gamma$ is any infinite cardinal, without assuming that $\Gamma^\omega= \Gamma$. We could then repeat the above proof, with $(u_\alpha)_{\alpha< \Gamma}$ only being a dense subset of $\ell_p(\Gamma)$. The construction of the subgroups $(\D_\alpha)_{\alpha< \Gamma}$ carries over without any change. The only difference is that $\D\coloneqq \bigcup_{\alpha<\Gamma} \D_\alpha$ is no longer $1$-dense and one can only deduce that $\dist(x,\D)\leq 1$ for all $x\in \ell_p(\Gamma)$, equivalently that $\D$ is $(1+\e)$-dense, for every $\e>0$. As in the previous remark, this result cannot be improved, because $\ell_2$ does not admit a $\sqrt{2}$-separated and $1$-dense subset. In fact, by \Cref{lem: K(X) proximinal} such a set would be proximinal, but no countable proximinal subset of $\ell_2$ can be $r$-dense, \cite{FonfLind}*{Proposition 2.1}.
\end{remark}

Let us re-state explicitly the conclusion of the previous remark.
\begin{corollary} For every infinite set $\Gamma$, $\ell_p(\Gamma)$ contains a $2^{1/p}$-separated subgroup $\D$ such that $\dist(x,\D)\leq 1$ for all $x\in \ell_p(\Gamma)$.
\end{corollary}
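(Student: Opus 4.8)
The plan is to re-run verbatim the transfinite construction from the proof of \Cref{thm: ell_p subgroup}, changing only the role played by the enumeration $(u_\alpha)_{\alpha<\Gamma}$; this is precisely the programme sketched in the remark preceding the statement. Since $\Gamma$ is infinite, the density character of $\ell_p(\Gamma)$ equals $\Gamma$ (the finitely supported vectors with rational coordinates form a dense set of cardinality $\Gamma$), so I would fix an enumeration $(u_\alpha)_{\alpha<\Gamma}$ of a \emph{dense} subset of $\ell_p(\Gamma)$ with $u_0=0$. One then builds, exactly as before, an increasing chain $(\D_\alpha)_{\alpha<\Gamma}$ of subgroups of $\ell_p(\Gamma)$ such that for every $\alpha<\Gamma$: (i) $\D_\alpha$ is $2$-separated when $p=1$ and $(2^{1/p}+)$-separated when $p\in(1,\infty)$; (ii) $|\D_\alpha|\leq\max\{|\alpha|,\omega\}$; (iii) there is $d\in\D_\alpha$ with $\|u_\alpha-d\|_p\leq1$.

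Nothing in the inductive step changes. At a stage $\gamma$ for which $\|u_\gamma-d\|_p>1$ for all $d\in\bigcup_{\alpha<\gamma}\D_\alpha$, one forms $\Gamma_0$ as the union of the supports of all previously built group elements together with $\supp(u_\gamma)$; by (ii) and the countability of supports, $|\Gamma_0|\leq\max\{|\gamma|,\omega\}<\Gamma$, so a fresh index $\tilde\gamma\in\Gamma\setminus\Gamma_0$ exists, and one sets $\D_\gamma\coloneqq\bigcup_{\alpha<\gamma}\D_\alpha+(u_\gamma+e_{\tilde\gamma})\ZZ$. The verification of (i) for $\D_\gamma$, via the splitting $\|x\|_p^p=\|d+nu_\gamma\|_p^p+|n|^p$ and the three cases $n=0$, $|n|=1$, $|n|\geq2$, is word-for-word the same. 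Hence $\D\coloneqq\bigcup_{\alpha<\Gamma}\D_\alpha$ is a subgroup of $\ell_p(\Gamma)$ that is $2$-separated when $p=1$ and $(2^{1/p}+)$-separated when $p>1$; in particular it is $2^{1/p}$-separated for every $1\leq p<\infty$.

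The only genuine difference lies in the density estimate, which necessarily weakens. Given $x\in\ell_p(\Gamma)$ and $\e>0$, the density of $\{u_\alpha\}_{\alpha<\Gamma}$ provides $\alpha<\Gamma$ with $\|x-u_\alpha\|_p<\e$, and then (iii) supplies $d\in\D_\alpha\subseteq\D$ with $\|u_\alpha-d\|_p\leq1$; thus $\dist(x,\D)\leq\|x-d\|_p\leq1+\e$. Letting $\e\to0^+$ gives $\dist(x,\D)\leq1$ for every $x\in\ell_p(\Gamma)$, which is the assertion. I do not expect any real obstacle: this is a cosmetic modification of \Cref{thm: ell_p subgroup}, and the only points deserving a second look are the (harmless) cardinality bookkeeping in the inductive step — identical to that of \Cref{thm: ell_p subgroup} — and the fact, emphasised in the preceding remark and forced by \Cref{lem: K(X) proximinal} together with \cite{FonfLind}*{Proposition 2.1}, that the conclusion cannot be upgraded to $1$-density, so that $\dist(x,\D)\leq1$ is the best statement available, and exactly the one claimed.
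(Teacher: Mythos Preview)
Your proposal is correct and follows exactly the route indicated in the paper: the corollary is merely a restatement of the remark preceding it, which says to rerun the transfinite construction of \Cref{thm: ell_p subgroup} with $(u_\alpha)_{\alpha<\Gamma}$ taken to be a dense subset rather than an enumeration of the whole space, whence the $1$-density weakens to $\dist(x,\D)\leq 1$. There is nothing to add.
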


In \Cref{thm: group in X} we shall show a simplification of the above proof that gives the same result in every normed space $\X$, albeit with worse constants.
\smallskip

We now move on to the next part of the section and apply \Cref{thm: ell_p subgroup} to our main interest in this paper, the construction of tilings in $\ell_p(\Gamma)$.

\subsection{Case \texorpdfstring{$p=2$}{p=2}}
We begin with the case when $p=2$, that leads us to the proof of \Cref{mainth: ell2 body}. Thus, let $\Gamma$ be a cardinal such that $\Gamma^\omega= \Gamma$ and, according to \Cref{thm: ell_p subgroup}, take a $(\sqrt{2}+)$-separated and $1$-dense subgroup $\D$ of $\ell_2(\Gamma)$. Because of \Cref{lem: K(X) proximinal}, we know that $\D$ is proximinal. Hence, we are in position to apply \Cref{prop: tile by Voronoi} and deduce that the Voronoi cells $\{V_d\}_{d\in \D}$ associated to $\D$ constitute a lattice tiling by symmetric convex bodies. Further, \eqref{eq: equiv norm} implies that
\[ \frac{\sqrt{2}}{2} B_{\ell_2(\Gamma)}\subseteq V_0\subseteq B_{\ell_2(\Gamma)}. \]
Consequently, $V_0$ is additionally bounded. In other words, $V_0$ is the unit ball of an equivalent norm $\nn\cdot$ on $\ell_2(\Gamma)$ and in this norm each $V_d$ is a ball of unit radius. This proves all the claims in \Cref{mainth: ell2 body}; notice that we additionally know that the Banach--Mazur distance between $\ell_2(\Gamma)$ and $(\ell_2(\Gamma),\nn\cdot)$ is at most $\sqrt{2}$.
\smallskip

We now study some additional properties of the tiling from \Cref{mainth: ell2 body}.

\begin{proposition}\label{prop: point-countable} The tiling from \Cref{mainth: ell2 body} is point-countable. Therefore, $(\ell_2(\Gamma),\nn\cdot)$ admits a point-countable (lattice) tiling with balls of radius $1$.

Further, the collection $\{d+ B_{\ell_2(\Gamma)}\}_{d\in \D}$ constitutes a point-countable covering of $\ell_2(\Gamma)$ by balls of unit radius (in the canonical norm).
\end{proposition}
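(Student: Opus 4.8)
The plan is to establish point-countability of the Voronoi tiling $\{V_d\}_{d\in\D}$ directly from the geometry of $\D$, and then transfer the two remaining clauses by elementary observations. Recall from \Cref{prop: tile by Voronoi} that $V_d=d+V_0$ and $V_0\subseteq B_{\ell_2(\Gamma)}$, so each tile $V_d$ is contained in the ball $d+B_{\ell_2(\Gamma)}$ of unit radius in the canonical norm. Fix $x\in\ell_2(\Gamma)$. If $x\in V_d$, then in particular $\|x-d\|_2\le 1$, so the only tiles that can contain $x$ are those $V_d$ with $d\in B_1(x)\cap\D$. Hence it suffices to show that $B_1(x)\cap\D$ is countable; in fact I would argue it is even finite when $x\notin\D$ (the same argument Klee uses and which appears in \Cref{lem: K(X) proximinal}), and countable in general.

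The key step is the following: a ball of radius strictly less than $\sqrt2$ cannot contain an infinite $\sqrt2$-separated subset of $\ell_2(\Gamma)$, because rescaling would produce an infinite $(\sqrt2/r)$-separated sequence in $B_{\ell_2(\Gamma)}$ with $\sqrt2/r>\sqrt2=K(\ell_2(\Gamma))$, contradicting the definition of the Kottman constant. Since $\D$ is $\sqrt2$-separated, this shows $B_r(x)\cap\D$ is finite for every $r<\sqrt2$, in particular for $r=1$; applied to each $x\in\ell_2(\Gamma)$ this already gives that the tiling is point-\emph{finite} in the canonical-norm sense at the level of these inclusions. To be careful about the closed ball $B_1(x)$ versus open balls: write $B_1(x)\cap\D=\bigcup_{n\in\N}B_{1-1/n}(x)\cap\D$, a countable union of finite sets, hence countable. (Alternatively, $1<\sqrt2$ directly, so $B_1(x)\cap\D$ is already finite; either way the conclusion is immediate.) Therefore at most countably many $V_d$ contain $x$, and $\{V_d\}_{d\in\D}$ is point-countable.

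For the second clause, $\{V_d\}_{d\in\D}$ is the same family as the tiling of $(\ell_2(\Gamma),\nn\cdot)$ by balls of radius $1$ (this identification was made just before the proposition, using $V_0=B_{\nn\cdot}$ and $V_d=d+V_0$), so point-countability of the one is literally point-countability of the other. For the third clause, consider the family $\{d+B_{\ell_2(\Gamma)}\}_{d\in\D}$ of canonical-norm unit balls. It is a covering because $\D$ is $1$-dense, and it is point-countable because $x\in d+B_{\ell_2(\Gamma)}$ forces $d\in B_1(x)\cap\D$, which we just showed is countable (indeed finite).

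I do not expect any serious obstacle: the only mild subtlety is the open-versus-closed ball bookkeeping in the Kottman-constant argument, but since $1<\sqrt2=K(\ell_2(\Gamma))$ strictly, the closed ball $B_1(x)$ already meets the $\sqrt2$-separated set $\D$ in a finite set, and everything goes through cleanly — in fact with ``countable'' improvable to ``finite'' throughout, though the weaker statement is all that is claimed.
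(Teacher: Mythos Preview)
Your argument has a genuine gap at the key step. The claim that a closed ball of radius $r<\sqrt{2}$ cannot contain an infinite $\sqrt{2}$-separated subset of $\ell_2(\Gamma)$ is false: the canonical basis $(e_\gamma)_{\gamma\in\Gamma}$ is an uncountable $\sqrt{2}$-separated subset of $B_1(0)$, and $1<\sqrt{2}$. The arithmetic slip is that $\sqrt{2}/r>\sqrt{2}=K(\ell_2(\Gamma))$ requires $r<1$, not $r<\sqrt{2}$; consequently the Kottman-constant argument yields finiteness of $B_r(x)\cap\D$ only for $r<1$ and says nothing about the closed ball of radius~$1$. Your attempted repair, writing $B_1(x)\cap\D=\bigcup_n B_{1-1/n}(x)\cap\D$, is also incorrect: the right-hand side is the \emph{open} ball intersected with $\D$, and the whole difficulty sits on the sphere $S_1(x)$. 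In particular, your claimed upgrade from ``countable'' to ``finite'' is unjustified --- the unit sphere of $\ell_2$ does contain countably infinite $(\sqrt{2}+)$-separated sequences, so the Kottman constant alone cannot rule out $B_1(x)\cap\D$ being infinite.

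The paper's proof supplies two ingredients your approach is missing. First, it uses that $\D$ is $(\sqrt{2}+)$-separated, not merely $\sqrt{2}$-separated --- this is precisely the $p>1$ strengthening in \Cref{mainth: subgroups}\eqref{mth: subgroup ell_p}. Second, it invokes a nontrivial combinatorial fact due to Koszmider \cite{Kosz_TAMS}*{Lemma~52}: the unit ball of $\ell_p(\Gamma)$ contains no \emph{uncountable} $(2^{1/p}+)$-separated subset. These together give $|B_1(x)\cap\D|\leq\omega$ directly. The Kottman constant cannot distinguish between countable and uncountable $(\sqrt{2}+)$-separated subsets of the unit ball, and that distinction is exactly what the proposition needs.
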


As we said in the Introduction, $\ell_2(\Gamma)$ does not admit point-finite coverings by balls when $\Gamma< \mathfrak{c}$, \cites{DEBEpointfinite, FZHilbert}. We don't know whether our construction can be improved to produce a point-finite covering of $\ell_2(\mathfrak{c})$, thus this problem remains open (\Cref{probl: point-finite}). It is also worth noting that our tiling cannot be locally countable, because of \cite{FPZsingularpoints}*{Proposition 1}.

\begin{proof} Both claims are easy consequence of the fact that, for $1\leq p<\infty$, the unit ball of $\ell_p(\Gamma)$ does not contain an uncountable $(2^{1/p}+)$-separated subset, \cite{Kosz_TAMS}*{Lemma 52}. In fact, suppose that there exists a point $x\in \ell_2(\Gamma)$ the belongs to $d+ B_{\ell_2(\Gamma)}$ for uncountably many $d\in \D$. Then, the ball $B_1(x)$ contains an uncountable subset of $\D$; because $\D$ is $(\sqrt{2}+)$-separated, this contradicts \cite{Kosz_TAMS}*{Lemma 52}. Finally, since $V_d\subseteq d+ B_{\ell_2(\Gamma)}$, the first assertion follows directly from the one we just proved.
\end{proof}

We now pass to studying some geometric properties of the Banach space $(\ell_2(\Gamma),\nn\cdot)$. As it turns out, the norm $\nn\cdot$ closely resembles a polyhedral one, with many flat faces; norms with similar properties were constructed for instance in \cites{Moreno, Partington}. Before our main result that makes this precise, \Cref{thm: not rotund &co} below, we need to recall the notions of QP-point and flat point, \emph{cf.}\ \cites{AD, KleeTri}. Moreover, we introduce the notion of $\Delta$-QP-point. Roughly speaking, this notion says that around the point $x$ the convex body $B$ coincides with the intersection of finitely many half-spaces (see also \cite{DHRfund}*{Lemma~2.6}). For more well-known unexplained notions we refer, \emph{e.g.}, to \cites{DEBEPRESOMMLUR, DEBESOMALUR, GMZ2}. 

\begin{definition} Let $B$ be a convex body in a normed space $\X$, and $x\in\partial B$. Then:
\begin{enumerate}
\item  $x$ is a \emph{quasi-polyhedral} point (\emph{QP-point}) of $B$ if there exists a neighbourhood $V$ of $x$ such that $[x,y]\subseteq \partial B$ whenever $y\in V\cap\partial B$;
\item  $x$ is a \emph{$\Delta$-QP-point} of $B$ if there exist a neighbourhood $V$ of $x$ and functionals $f_1,\ldots,f_n\in \X^*$ such that 
\begin{equation} \label{eq: LFC}
    B\cap V=\bigcap_{k=1}^n \{z\in V\colon f_k(z)\leq f_k(x)\};
\end{equation}
\item $x$ is a \emph{flat} point of $B$ if there exists a supporting hyperplane $H$ to $B$ at $x$ such that $x\in\text{a-int}_H(B\cap H)$, where $\text{a-int}_H(B\cap H)$ is the relative algebraic interior (or `core') of $B\cap H$ in $H$. If $\X$ is a Banach space then, by the Baire theorem, it is equivalent to require that $x$ belongs to $\text{int}_H(B\cap H)$. 
\end{enumerate}
\end{definition}

The name $\Delta$-QP-point is justified by the fact that, when $B$ is the unit ball of $\X$, $x$ is a $\Delta$-QP-point for $B$ if and only if it is a QP-point and property ($\Delta$) is satisfied at $x$. Since we won't use this characterisation, we omit its proof and we refer to \cite{FOVEpoly} for the definition of ($\Delta$). In the following lemma, whose proof is immediate, we collect some basic facts concerning the notions we just introduced.

\begin{lemma}\label{obs: flat VS QP} Let $B$ be a convex body in a normed space $\X$. Then:
\begin{enumerate}
    \item Every $\Delta$-QP-point is a QP-point of $B$. 
    \item If $\X$ is a Banach space, every flat point of $B$ is a $\Delta$-QP-point of $B$.
    \item\label{i: QP not rotund} If $B$ admits QP-points, then it is not rotund.
    \item\label{i: Gateaux QP is flat} Every G\^ateaux smooth QP-point of $B$ is a flat point.
\end{enumerate}
\end{lemma}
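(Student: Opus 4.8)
The plan is to prove the four items essentially independently, in each case reading off a local description of $B$ from the relevant notion. For \textbf{(1)}, I would take a $\Delta$-QP-point $x$ with a neighbourhood $V$ — which I may shrink to an open ball, hence assume convex — and non-zero functionals $f_1,\dots,f_n$ witnessing \eqref{eq: LFC}. Given $y\in V\cap\partial B$, membership of $y$ in the right-hand side of \eqref{eq: LFC} yields $f_k(y)\leq f_k(x)$ for every $k$, and equality must hold for at least one index $k_0$: otherwise all the inequalities would be strict on a whole neighbourhood of $y$, which would then lie in $B$, contradicting $y\in\partial B$. For $w=(1-s)x+sy$ with $s\in[0,1]$ one has $w\in V$ and $f_k(w)\leq f_k(x)$ for all $k$, so $w\in B$; and $f_{k_0}(w)=f_{k_0}(x)$, so (as $f_{k_0}\neq 0$) points arbitrarily close to $w$ violate the $k_0$-th inequality and lie outside $B$, whence $w\in\partial B$. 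Thus $[x,y]\subseteq\partial B$, i.e.\ $x$ is a QP-point.

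For \textbf{(2)}, I would use the Banach hypothesis only to replace $\text{a-int}_H$ by $\operatorname{int}_H$, so that flatness at $x$ gives a supporting hyperplane $H=\{f=f(x)\}$ (with $B\subseteq\{f\leq f(x)\}$) together with a radius $r>0$ such that $\{z\in H\colon \|z-x\|\leq r\}\subseteq B$. Fixing $x_0\in\operatorname{int}B$, so that $f(x_0)<f(x)$, the goal is to show that every $z$ sufficiently close to $x$ with $f(z)\leq f(x)$ lies in $B$. The line through $x_0$ and $z$ meets $H$ at $w=x_0+t^*(z-x_0)$ with $t^*=\tfrac{f(x)-f(x_0)}{f(z)-f(x_0)}\geq 1$; since $t^*\to 1$ as $z\to x$, a short estimate gives $\|w-x\|\to 0$, hence $w\in B$ for $z$ near $x$, and then $z=(1-\tfrac1{t^*})x_0+\tfrac1{t^*}w$ is a convex combination of points of $B$, so $z\in B$. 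Therefore $B\cap V=\{z\in V\colon f(z)\leq f(x)\}$ for a small ball $V$ about $x$, i.e.\ $x$ is a $\Delta$-QP-point (with a single functional).

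Item \textbf{(3)} reduces to producing a non-degenerate segment inside $\partial B$: if $x$ is a QP-point with neighbourhood $V$, it suffices to find $y\in(V\cap\partial B)\setminus\{x\}$, for then $[x,y]\subseteq\partial B$ and $B$ is not rotund. Such a $y$ exists because a convex body in a normed space of dimension at least two has no isolated boundary points: if $x$ were isolated in $\partial B$, then a small ball $U$ around $x$ (so that $U\setminus\{x\}$ is connected) would be partitioned by the disjoint non-empty open sets $U\cap\operatorname{int}B$ and $U\setminus B$, both of which meet every neighbourhood of $x$ because $B$ is a body and $x\in\partial B$ — a contradiction.

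Finally, for \textbf{(4)} the key preliminary observation is that if $x$ is a G\^ateaux smooth point of $B$ and $[x,y]\subseteq\partial B$, then $[x,y]$ lies in the unique supporting hyperplane $H_x=\{g=g(x)\}$ of $B$ at $x$ (with $g$ the corresponding functional): a supporting functional at the midpoint of $[x,y]$ is forced to be constant on $[x,y]$, hence supports $B$ also at $x$, so by smoothness it is a positive multiple of $g$ and $g(y)=g(x)$. Granting this, a smooth QP-point $x$ satisfies $V\cap\partial B\subseteq H_x$; to conclude $x\in\text{a-int}_{H_x}(B\cap H_x)$, I would fix a direction $h$ with $g(h)=0$ and, assuming $x+th\notin B$ for $t$ of arbitrarily small modulus, push the segment from an interior point $x_0$ of $B$ to $x+th$: the point $q$ where it leaves $B$ satisfies $g(q)<g(x)$ but tends to $x$ as $t\to 0$, so eventually $q\in V\cap\partial B\subseteq H_x$, contradicting $g(q)<g(x)$. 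Hence $x+th\in B$ for all small $t$ and $x$ is a flat point. I expect the only slightly delicate points to be the continuity estimate $\|w-x\|\to 0$ in \textbf{(2)} and the `converging exit point' claim used in \textbf{(3)}--\textbf{(4)} — that the point at which $[x_0,z]$ leaves $B$ tends to $x$ as $z\to x$, proved by extracting a subsequence of the parameters and using $[x_0,x)\subseteq\operatorname{int}B$ — everything else being an immediate consequence of convexity and of the definitions.
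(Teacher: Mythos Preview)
The paper does not actually supply a proof of this lemma --- it calls the proof ``immediate'' --- so there is no approach to compare against. Your argument is correct and complete in all four parts; the estimates in \textbf{(2)} and the exit-point convergence in \textbf{(4)} work exactly as you describe. Two cosmetic remarks: in \textbf{(1)} you use $f_{k_0}\neq 0$, which is justified since any $f_k=0$ can be discarded (and not all can vanish, else $V\subseteq B$ would contradict $x\in\partial B$); and in \textbf{(4)} the strict inequality $g(q)<g(x)$ relies on the exit parameter being strictly less than $1$, which holds precisely because $x+th\notin B$ --- worth making explicit.
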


We shall also need the following simple fact, whose proof is a plain adaptation of the arguments used in \cite{DEVEtiling}*{Lemma~3.7}, \cite{KleeTri}*{Theorem~5.1}, and \cite{Fonf3poly}*{Theorem~2}. 

\begin{fact}\label{fact: QP} Let $\T$ be a convex tiling of a normed space $\X$, $B\in \T$, and $x\in \partial B$ be a regular point for $\T$. Then $x$ is a $\Delta$-QP-point of $B$. 
\end{fact}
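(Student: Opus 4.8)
The plan is to argue that since $x$ is a regular point for $\T$, only finitely many tiles come close to $x$, and the finitely many bounding hyperplanes they contribute cut out $B$ near $x$. First I would fix a bounded neighbourhood $U$ of $x$ (say $U = x + \rho B_\X$ for small $\rho>0$) that meets only finitely many tiles of $\T$, which is possible precisely because $x$ is regular; enumerate them as $B = B_0, B_1, \dots, B_n$. Since these are the only tiles meeting $U$ and $\T$ covers $\X$, we have $\bigcup_{k=0}^n B_k \supseteq U$, and in particular $B \cap U$ together with $\bigcup_{k=1}^n (B_k \cap U)$ covers $U$. Now for each $k \geq 1$, the point $x$ does not lie in the interior of $B_k$: if it did, a whole neighbourhood of $x$ would be contained in $B_k$, contradicting that $B_k$ and $B_0$ have disjoint interiors while $x \in \partial B_0$ (so every neighbourhood of $x$ meets $\operatorname{int} B_0$). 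Hence $x \notin \operatorname{int} B_k$, and by the Hahn–Banach separation theorem there is a functional $f_k \in \X^*$, $\|f_k\| = 1$, with $f_k(z) \geq f_k(x)$ for all $z \in B_k$, i.e. $B_k$ lies in the closed half-space $H_k^+ \coloneqq \{z\colon f_k(z)\geq f_k(x)\}$.

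Next I would show that, after possibly shrinking $U$, the reverse half-spaces $\{z \colon f_k(z) \leq f_k(x)\}$ recover $B$. Set $V \coloneqq \operatorname{int} U$ and consider $z \in V$ with $f_k(z) \leq f_k(x)$ for all $k = 1, \dots, n$; I claim $z \in B$. Indeed $z$ lies in some tile meeting $U$, so $z \in B_k$ for some $k \in \{0, 1, \dots, n\}$. If $k \geq 1$ then $z \in B_k \subseteq H_k^+$ gives $f_k(z) \geq f_k(x)$, which combined with $f_k(z) \leq f_k(x)$ forces $f_k(z) = f_k(x)$, so $z$ lies on the supporting hyperplane of $B_k$; but then $z \in \partial B_k$, hence $z \in B_k \subseteq \overline{B_k}$... this is the delicate point and I address it below. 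The clean inclusion is the other one: if $z \in B \cap V$, then since $B \subseteq \X$ has $x \in \partial B$ and each $f_k$ was chosen to separate $B_k$ from $x$ — wait, I need $f_k$ to separate $B$, not $B_k$. The correct setup is: for each $k\ge 1$, since $B$ and $B_k$ are non-overlapping convex bodies and $x \in B \cap \partial B_k$ (as $x\in U\cap B_k$ and $x\notin\operatorname{int}B_k$), separate $\operatorname{int}B$ from $\operatorname{int}B_k$ by $f_k$ with $f_k \leq f_k(x)$ on $B$ and $f_k \geq f_k(x)$ on $B_k$. Then $B \cap V \subseteq \bigcap_{k=1}^n \{z \in V \colon f_k(z) \leq f_k(x)\}$ is immediate. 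For the reverse inclusion, given $z \in V$ with all $f_k(z) \leq f_k(x)$: $z$ lies in some $B_k$; if $k \geq 1$ then $f_k(z) \geq f_k(x)$ too, so $f_k(z) = f_k(x)$, meaning $z$ is in the supporting hyperplane $f_k^{-1}(f_k(x))$, on which $B_k$ has empty interior, so $z \in \partial B_k$; since $\T$ covers $\X$ and the tiles near $z$ are among $B_0,\dots,B_n$, a point of $\partial B_k$ is a limit of points of $B \cup B_1 \cup \dots$ — to pin $z \in B$ I would instead take $z$ in the open set $V$, note that a neighbourhood of $z$ is covered by $B_0, \dots, B_n$, and observe that the subset of that neighbourhood where all $f_k < f_k(x)$ (strictly) must lie in $B_0 = B$ since it avoids each $B_k$'s half-space $H_k^+$ except on a hyperplane; then $z \in B$ by closedness of $B$.

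The main obstacle, as the above wrestling indicates, is the bookkeeping to convert "$z$ avoids the $n$ bad half-spaces and sits in a region covered by finitely many tiles" into "$z \in B$", cleanly handling the boundary case $f_k(z) = f_k(x)$ where $z$ sits on a supporting hyperplane of $B_k$. The way to handle it cleanly is to work with the open half-spaces first: the open set $W \coloneqq V \cap \bigcap_{k=1}^n \{f_k < f_k(x)\}$ is covered by $B_0, \dots, B_n$ but is disjoint from each $\operatorname{int} B_k$ ($k \geq 1$) — actually from each $B_k$ entirely, since $B_k \subseteq H_k^+$ — hence $W \subseteq B_0 = B$; then taking closures inside $V$ and using that $B$ is closed while $\{f_k \leq f_k(x)\}$ is the closure of $\{f_k < f_k(x)\}$ relative to $V$ yields $B \cap V = \bigcap_{k=1}^n \{z \in V \colon f_k(z) \leq f_k(x)\}$, which is exactly \eqref{eq: LFC} with the neighbourhood $V$ and functionals $f_1, \dots, f_n$. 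Finally I would remark that this is precisely the adaptation of \cite{DEVEtiling}*{Lemma~3.7}, \cite{KleeTri}*{Theorem~5.1}, and \cite{Fonf3poly}*{Theorem~2} promised in the statement, the only new input being the elementary observation that a regular point has a neighbourhood meeting finitely many tiles.
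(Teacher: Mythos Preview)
Your approach is essentially the paper's: isolate the finitely many neighbouring tiles, separate each from $B$ by a hyperplane through $x$, and show that near $x$ the resulting half-spaces cut out $B$. There is, however, one genuine slip. You enumerate the tiles \emph{meeting} $U$, and then write ``$x \in B \cap \partial B_k$ (as $x\in U\cap B_k$\ldots)''. But $B_k \cap U \neq \emptyset$ does not give $x \in B_k$; for a tile $B_k$ that meets $U$ but misses $x$, the separating hyperplane between $B$ and $B_k$ need not pass through $x$, and then the inclusion $B\cap V \subseteq \{f_k \leq f_k(x)\}$ can fail. The fix is immediate---since such $B_k$ are finitely many closed sets not containing $x$, shrink $U$ once more so that it meets only tiles of $\T_x = \{D\in\T\colon x\in D\}$. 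The paper does exactly this from the outset: it works with $\T_x$ and invokes the fact (from \cite{Klee2}*{Theorem~1.1}) that a regular point satisfies $x\in \operatorname{int}(\bigcup\T_x)$, which is equivalent to the shrinking step.

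For the reverse inclusion the two arguments differ slightly. You pass to the open set $W = V \cap \bigcap_k \{f_k < f_k(x)\}$, observe $W\subseteq B$, and then ``take closures''. That last step is not automatic: you need $W\neq\emptyset$ (so that the closure of a finite intersection of open convex sets equals the intersection of their closures), which follows from $\operatorname{int}(B)\cap V \subseteq W$---but only once the separating hyperplanes actually pass through $x$, i.e.\ once the gap above is repaired. The paper instead argues by contradiction: given $z\in V\setminus B$ with all $f_k(z)\leq f_k(x)$, it slides $z$ slightly along the segment toward a point $w\in V\cap\operatorname{int}(B)$ to force strict inequalities $f_k(z)<f_k(x)$, and then notes that $z$ must lie in some $B_{\bar k}$, giving $f_{\bar k}(z)\geq f_{\bar k}(x)$. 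This perturbation trick is what your open-half-space-then-closure argument is packaging, and it sidesteps the need to discuss closures of intersections.
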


\begin{proof} Let us denote by $\T_x=\{D\in\T\colon x\in D\}$; since $\T_x$ is finite, we can write 
\[ \T_x=\{B,B_1,\ldots,B_n\}. \]
As $x$ is a regular point, it is easy to see that $x\in\mathrm{int} (\bigcup\T_x)$ (see also \cite{Klee2}*{Theorem~1.1}). Let $V\subseteq \X$ be an open convex neighbourhood of $x$ such that $V\subseteq \mathrm{int} (\bigcup\T_x)$. For every $k=1,\ldots,n$, let $f_k\in \X^*$ be such that
\[ \sup f_k(B)=f_k(x)=\inf f_k(B_k). \]
We claim that \eqref{eq: LFC} holds, where the inclusion
\[ B\cap V\subseteq \bigcap_{k=1}^n \{z\in V\colon f_k(z)\leq f_k(x)\} \]
is clear. Suppose on the contrary that there exists $z\in V\setminus B$ such that $f_k(z)\leq f_k(x)$, whenever $k=1,\dots,n$. As $B$ is a body, we can take some $w\in V\cap\mathrm{int}(B)$; up to replacing $z$ with a point in $(z,w)\setminus B$, we can suppose that actually $f_k(z)< f_k(x)$, for all $k=1,\dots,n$. However, there exists $\overline k\in\{1,\dots,n\}$ such that $z\in B_{\overline k}$, which gives the contradiction that $f_{\overline k}(x)\leq f_{\overline k}(z)$.
\end{proof}

\begin{theorem}\label{thm: not rotund &co} Let $\nn\cdot$ be the equivalent norm on $\ell_2(\Gamma)$ constructed at the beginning of the present paragraph and $\T= \{V_d\}_{d\in\D}$ be the corresponding tiling. Then:
\begin{enumerate}
    \item\label{item: LUR} $B_{\nn\cdot}$ does not admit LUR points.
    \item\label{item: SE} If $x_0\in S_{\nn\cdot}$ is a strongly exposed point of $B_{\nn\cdot}$, then $\nn\cdot$ is not Fr\'echet differentiable at $x_0$.
    \item\label{item: locallyfinite} A point $x_0\in S_{\nn\cdot}$ is a regular point for $\T$ if and only if $\|x_0\|_2<1$. 
    \item\label{item: QP point dense} The set of $\Delta$-QP-points of $B_{\nn\cdot}$ is norm dense in $S_{\nn\cdot}$.
    \item\label{item: R} The norm  $\nn\cdot$ is not rotund.
    \item\label{item: G-smooth} The norm  $\nn\cdot$ is not G\^ateaux differentiable.
    \end{enumerate}
\end{theorem}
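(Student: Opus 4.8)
The strategy is to derive everything from the sandwich $\frac{\sqrt2}{2}B_{\ell_2(\Gamma)}\subseteq B_{\nn\cdot}=V_0\subseteq B_{\ell_2(\Gamma)}$ together with the tiling structure of $\T$ and the abstract facts about QP-points collected in \Cref{obs: flat VS QP} and \Cref{fact: QP}. The crucial observation, which I would establish first, is that $\T$ has \emph{no singular points}: since $\D$ is $(\sqrt2+)$-separated and $1$-dense, every closed ball $B_1(x)$ meets only finitely many of the balls $d+B_{\ell_2(\Gamma)}$ by \cite{Kosz_TAMS}*{Lemma 52} (as in the proof of \Cref{prop: point-countable}); since $V_d\subseteq d+B_{\ell_2(\Gamma)}$ and $\bigcup_{d}V_d=\ell_2(\Gamma)$, the open ball of radius $\tfrac12$ around any point meets only finitely many tiles, i.e. $\T$ is locally finite, hence every point of $\ell_2(\Gamma)$ is regular for $\T$.

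Item \eqref{item: locallyfinite} then becomes a separation-of-cases statement: if $\nn{x_0}=1$ and $\|x_0\|_2<1$, then $x_0\in\partial B_{\nn\cdot}=\partial V_0$, so $x_0\in\partial V_d$ for some neighbour $d\in\D\setminus\{0\}$; since $x_0$ is regular and interior to $\bigcup\T_{x_0}$ (as in \Cref{fact: QP}), I claim $x_0$ is a regular point for $\T$. Conversely, if $\|x_0\|_2=1$ (equivalently $\nn{x_0}=\|x_0\|_2=1$, since $V_0\subseteq B_{\ell_2(\Gamma)}$ forces $x_0$ on the Euclidean sphere to lie on $\partial V_0$), then $x_0\in S_{\ell_2(\Gamma)}$ is a point where $B_{\ell_2(\Gamma)}$ is LUR, and I would argue $x_0$ must be singular for $\T$: a regular point $x_0$ is a $\Delta$-QP-point of $V_0$ by \Cref{fact: QP}, in particular a QP-point, but a QP-point of $V_0$ that lies on $S_{\ell_2(\Gamma)}$ would force the Euclidean sphere to contain a non-trivial segment through $x_0$ (because near $x_0$ the boundary $\partial V_0$ is pinched against $S_{\ell_2(\Gamma)}$), contradicting rotundity of $\ell_2(\Gamma)$ — this pinching step, making precise that $\partial V_0$ agrees with $S_{\ell_2(\Gamma)}$ at every Euclidean-unit-norm point of $\partial V_0$ and inheriting a segment, is the part I expect to need the most care. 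Here I would use that $\|x_0\|_2=1$ together with $V_0\subseteq B_{\ell_2(\Gamma)}$ gives a common supporting functional, and a QP neighbourhood $V$ with $[x_0,y]\subseteq\partial V_0$ for $y\in V\cap\partial V_0$ produces points of $\partial V_0$ with Euclidean norm $<1$ arbitrarily close to $x_0$ in every such segment direction — then the extreme-point structure of $\ell_2$ kicks in.

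With \eqref{item: locallyfinite} in hand the rest follows quickly. For \eqref{item: QP point dense}: the set $\{x\in S_{\nn\cdot}\colon\|x\|_2<1\}$ is norm-dense in $S_{\nn\cdot}$ (scale any $x\in S_{\nn\cdot}$ slightly and renormalise, using $\nn\cdot\le\|\cdot\|_2$ strictly off the Euclidean sphere; more simply, the Euclidean-unit-norm points of $S_{\nn\cdot}$ are a nowhere dense subset), and by \eqref{item: locallyfinite} and \Cref{fact: QP} every such point is a $\Delta$-QP-point of $V_0=B_{\nn\cdot}$. Items \eqref{item: R} and \eqref{item: G-smooth} are then immediate: \eqref{item: R} follows from \Cref{obs: flat VS QP}\eqref{i: QP not rotund} since $B_{\nn\cdot}$ has QP-points by \eqref{item: QP point dense}; for \eqref{item: G-smooth}, if $\nn\cdot$ were G\^ateaux smooth then every QP-point would be a flat point by \Cref{obs: flat VS QP}\eqref{i: Gateaux QP is flat}, and a flat point carries a supporting hyperplane $H$ containing a relative neighbourhood of $x$ inside $\partial B_{\nn\cdot}$, hence $\partial B_{\nn\cdot}$ would contain a Euclidean-bounded infinite-dimensional convex set with nonempty interior in $H$, which when combined with $B_{\nn\cdot}\subseteq B_{\ell_2(\Gamma)}$ contradicts the strict convexity of $B_{\ell_2(\Gamma)}$ — alternatively, a G\^ateaux-smooth QP-point is flat, but flatness at a point $x$ with $\nn x=1$, $\|x\|_2<1$ would again force a segment on $\partial V_0$ transversal to the Euclidean sphere, which is fine, so here I would instead note directly that a flat face of $B_{\nn\cdot}$ cannot be contained in $B_{\ell_2(\Gamma)}$ having its relative interior point at Euclidean norm $<1$… the cleanest route is: $B_{\nn\cdot}$ has a QP-point, if G\^ateaux smooth it is flat, but a space with a flat point in its unit ball which is sandwiched inside a rotund ball is impossible since the supporting functional at the flat point strongly exposes a single point of $B_{\ell_2(\Gamma)}$ yet must be constant on a relatively open subset of $\partial B_{\nn\cdot}$. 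Finally \eqref{item: LUR} and \eqref{item: SE}: a point of $S_{\nn\cdot}$ with $\|x\|_2<1$ is a $\Delta$-QP-point, hence (by \Cref{obs: flat VS QP}\eqref{i: QP not rotund} applied locally, or directly) lies on a nontrivial segment in $S_{\nn\cdot}$, so is neither LUR nor strongly exposed; a point with $\|x\|_2=1$ is a LUR point of $B_{\ell_2(\Gamma)}$ but $B_{\nn\cdot}\subseteq B_{\ell_2(\Gamma)}$ with the same norming functional shows $\nn\cdot$ is not Fr\'echet smooth there (indeed $\nn\cdot$ is not even G\^ateaux smooth there since $\partial V_0$ meets $S_{\ell_2(\Gamma)}$ only at isolated-direction points while staying strictly inside elsewhere — so the supporting cone is non-trivial), and no LUR point of $B_{\nn\cdot}$ can exist because such a point would be a rotundity point, impossible near the QP-points which are dense. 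The main obstacle throughout is the single geometric lemma that $\partial V_0\cap S_{\ell_2(\Gamma)}$ consists of points at which $V_0$ is rotund (so not QP) while $\partial V_0\setminus S_{\ell_2(\Gamma)}$ consists of QP-points; once this dichotomy is nailed down, all six items drop out.
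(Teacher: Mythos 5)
The decisive gap is your opening claim that $\T$ is locally finite and has no singular points. \cite{Kosz_TAMS}*{Lemma 52} only excludes \emph{uncountable} $(\sqrt{2}+)$-separated subsets of the unit ball; countably infinite ones do exist in $B_{\ell_2(\Gamma)}$ (take unit vectors with pairwise strictly negative inner products), so $B_1(x)\cap\D$ may be infinite and the lemma yields only point-countability, exactly as in \Cref{prop: point-countable}. Moreover the claim cannot be repaired: a locally finite tiling of the reflexive space $\ell_2(\Gamma)$ by bounded convex bodies would contradict Corson's theorem \cite{Corson}, and by \cite{FPZsingularpoints}*{Proposition~1} every extreme point of the weakly compact convex body $V_0$ is a singular point for $\T$ (the paper even remarks after \Cref{prop: point-countable} that the tiling is not locally countable). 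Indeed, item \eqref{item: locallyfinite} that you are proving asserts that the points of $S_{\nn\cdot}$ lying on the Euclidean unit sphere \emph{are} singular. The finiteness mechanism of \Cref{lem: K(X) proximinal} applies only to balls of radius $r<1$, and that is what the paper's proof of \eqref{item: locallyfinite} uses to get regularity when $\|x_0\|_2<1$; your blanket ``no singular points'' claim conflates the two radii and the rest of your argument is built on it.

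Because of this, the backbone of \eqref{item: locallyfinite} collapses, and so do the items you derive from it. Your fallback for the ``only if'' direction --- that a regular point on $S_{\ell_2(\Gamma)}$ would be a QP-point whose segments force a segment inside the Euclidean sphere --- does not work either: the QP segments lie in $\partial V_0$, not in $S_{\ell_2(\Gamma)}$, so no contradiction with Euclidean rotundity follows; the paper instead notes that such a point is an extreme point of $B_{\nn\cdot}$ (since $B_{\nn\cdot}\subseteq B_{\ell_2(\Gamma)}$ and the latter is rotund) and invokes \cite{FPZsingularpoints}*{Proposition~1}. Further issues: the density of $\{x\in S_{\nn\cdot}\colon \|x\|_2<1\}$ in $S_{\nn\cdot}$ requires an argument (the paper obtains it from item \eqref{item: LUR} together with the LUR-ness of $\|\cdot\|_2$), not the unproved ``nowhere dense'' assertion, and your inequality $\nn\cdot\leq\|\cdot\|_2$ is reversed ($B_{\nn\cdot}\subseteq B_{\ell_2(\Gamma)}$ gives $\|\cdot\|_2\leq\nn\cdot$); finally, your treatment of \eqref{item: LUR}, \eqref{item: SE} at points with $\|x\|_2=1$ and of \eqref{item: G-smooth} is heuristic, whereas the paper deduces \eqref{item: LUR}--\eqref{item: SE} from \cite{DEVEtiling}*{Theorem~4.9} (a result about tilings by translates of the unit ball) and proves \eqref{item: G-smooth} by producing two distinct supporting functionals at an endpoint of a maximal segment, comparing the Hahn--Banach functional with $\langle x,\cdot\rangle$.
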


\begin{proof} \eqref{item: LUR} and \eqref{item: SE} immediately follow from \cite{DEVEtiling}*{Theorem~4.9, (ii), (iv)}, taking into account that $\T$ is a tiling by translates of the unit ball $B_{\nn\cdot}$.

Next, we prove \eqref{item: locallyfinite}. If $x_0\in S_{\nn\cdot}$ and $\|x_0\|_2=1$ then we necessarily have that $x_0$ is an extreme point of $B_{\nn\cdot}$ (since $B_{\nn\cdot}\subseteq B_{\n_2}$ and $B_{\n_2}$ is rotund). By \cite{FPZsingularpoints}*{Proposition~1}, $x_0$ is a singular point for $\T$. For the other implication, assume that $\|x_0\|_2<1$ and let $\e>0$ be such that $r\coloneqq\|x_0\|_2+2\e<1$. The same argument as in the proof of \Cref{lem: K(X) proximinal} ensures us that the set $S\coloneqq (x_0+r B_{\|\cdot\|_2})\cap \D$ is finite. Thus, for every $d\in \D \setminus S$ we have that $\|x_0- d\|_2 >r$. Hence, for $y\in \ell_2(\Gamma)$ with $\|x_0- y\|_2<\e$, we have 
\[ \|y\|_2< \|x_0\|_2+\e= r-\e< \|y- d\|_2. \]
By definition of the Voronoi cells, it follows that $y$ can only belong to the cells $\{V_d\}_{d\in S}$, and therefore $x_0$ is a regular point for $\T$.

To prove \eqref{item: QP point dense}, fix $x_0\in S_{\nn\cdot}$ and let us prove that arbitrarily close to $x_0$ we can find a $\Delta$-QP-point of $B_{\nn\cdot}$. If $\|x_0\|_2<1$ then, by \eqref{item: locallyfinite} and \Cref{fact: QP}, $x_0$ is a $\Delta$-QP-point of $B_{\nn\cdot}$, and we are done. So, we can suppose that $\|x_0\|_2=1$. Notice  that, by \eqref{item: LUR} and since $\n_2$ is LUR, every neighbourhood of $x_0$ contains a point $y_0$ such that $\|y_0\|_2< \nn{y_0}=1$. Arguing as above, $y_0$ is a $\Delta$-QP-point of $B_{\nn\cdot}$, and the proof of \eqref{item: QP point dense} is concluded.

The assertion in \eqref{item: R} immediately follows from \eqref{item: QP point dense} and \Cref{obs: flat VS QP}\eqref{i: QP not rotund}. Finally, we prove \eqref{item: G-smooth}. Since $B_{\nn\cdot}$ admits $\Delta$-QP-points (and hence QP-points), there exists a non-trivial segment $[x,y]\subseteq S_{\nn\cdot}$. Moreover, we can suppose that $[x,y]$ is maximal, in the sense that no other segment contained in $S_{\nn\cdot}$ contains properly $[x,y]$. In particular, $x$ is not a flat point of $B_{\nn\cdot}$. We claim that $x$ is not a  G\^ateaux smooth point of $B_{\nn\cdot}$. In fact, if $\|x\|_2<1$, then it is a QP-point (by \eqref{item: locallyfinite} and \Cref{fact: QP}, as in the previous paragraph); hence, it is not G\^ateaux smooth, by \Cref{obs: flat VS QP}\eqref{i: Gateaux QP is flat}. Thus, we can suppose that $\|x\|_2=1$. By the Hahn-Banach theorem, there exists $\p\in \ell_2(\Gamma)^*$ such that 
\[ \p(x)= \p(y)=1= \sup \p(B_{\nn\cdot})< \sup \p(B_{\|\cdot\|_2}), \]
where the inequality holds since, by strict convexity, $\|x+y\|_2<2$ and $\p(x+y)=2$. On the other hand, if we consider the functional $\psi \in\ell_2(\Gamma)^*$ defined by $\psi=\langle x,\cdot\rangle$, we have
\[ \psi(x)=1= \sup \psi(B_{\|\cdot\|_2})=\sup \psi(B_{\nn\cdot}). \]
Thus, $\p\neq \psi$, and $B_{\nn\cdot}$ admits two distinct support functionals at $x$, as desired.
\end{proof}

\subsection{Case \texorpdfstring{$p\in(1,\infty)$, $p\neq 2$}{1<p<∞}}\label{sec: p not 2}
We now briefly discuss the case when $p\in (1,\infty)$ is not necessarily equal to $2$. The considerations from the case when $p=2$ apply almost identically, the only difference being that when applying \Cref{prop: tile by Voronoi} we cannot ensure that the Voronoi cells are convex. On the other hand, when $p\in(1,\infty)$, $\ell_p(\Gamma)$ is LUR and therefore the Voronoi cells constitute a tiling by starshaped bodies. Hence, we have the following result.

\begin{theorem} For a cardinal $\Gamma$ with $\Gamma^\omega= \Gamma$ and $p\in (1,\infty)$, the Banach space $\ell_p(\Gamma)$ admits a point-countable lattice tiling by bounded and symmetric starshaped bodies.

Moreover, $\ell_p(\Gamma)$ admits a point-countable covering with balls of radius $1$.
\end{theorem}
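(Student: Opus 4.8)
The plan is to run the same machinery as in the case $p=2$, the only loss being that the Voronoi cells need no longer be convex; consequently we can only claim that they are starshaped bodies. I would proceed in three steps.

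\emph{Step 1 (producing a good subgroup).} First I would invoke \Cref{thm: ell_p subgroup} to obtain a $(2^{1/p}+)$-separated and $1$-dense subgroup $\D$ of $\ell_p(\Gamma)$; in particular, $\D$ is $2^{1/p}$-separated. Since $K(\ell_p(\Gamma))=2^{1/p}$ (as recalled before \Cref{lem: K(X) proximinal}), \Cref{lem: K(X) proximinal} applies and shows that $\D$ is proximinal. Hence $\D$ satisfies hypotheses \eqref{i: prox}, \eqref{i: subgroup}, \eqref{i: separated} of \Cref{prop: tile by Voronoi}, with $R=2^{1/p}$ and $r=1$.

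\emph{Step 2 (the tiling).} Since $\ell_p(\Gamma)$ is LUR for $p\in(1,\infty)$, \Cref{prop: tile by Voronoi} yields that the associated Voronoi cells $\{V_d\}_{d\in\D}$ form a tiling of $\ell_p(\Gamma)$, each $V_d=d+V_0$ being a body, symmetric about $d$ and starshaped with respect to $d$; moreover \eqref{eq: equiv norm} gives $\frac{2^{1/p}}{2}B_{\ell_p(\Gamma)}\subseteq V_0\subseteq B_{\ell_p(\Gamma)}$, so that $V_0$ is bounded. By \Cref{def: lattice} this is a lattice tiling, and it is by bounded symmetric starshaped bodies. (Unlike the Hilbert case, \Cref{prop: tile by Voronoi} does not deliver convexity of $V_0$ here, which is why we only assert starshapedness; whether convex Voronoi cells can be arranged in $\ell_p(\Gamma)$ for $p\neq 2$ we leave open.)

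\emph{Step 3 (point-countability and the covering by balls).} This is the verbatim argument of \Cref{prop: point-countable}: by \cite{Kosz_TAMS}*{Lemma 52}, the unit ball of $\ell_p(\Gamma)$ contains no uncountable $(2^{1/p}+)$-separated set, so for each $x\in\ell_p(\Gamma)$ the ball $B_1(x)$ meets the $(2^{1/p}+)$-separated set $\D$ in a countable set. Thus the family $\{d+B_{\ell_p(\Gamma)}\}_{d\in\D}$—which is a covering of $\ell_p(\Gamma)$ because $\D$ is $1$-dense—is point-countable, proving the ``moreover'' clause; and since $V_d\subseteq d+B_{\ell_p(\Gamma)}$ for every $d$, the tiling $\{V_d\}_{d\in\D}$ is point-countable as well. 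I do not expect any genuine obstacle: the statement is a direct transcription of the $p=2$ case, with the single caveat (already flagged) that convexity of the cells is lost.
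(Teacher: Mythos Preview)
Your proposal is correct and follows exactly the approach the paper takes: the paper simply remarks that ``the considerations from the case when $p=2$ apply almost identically, the only difference being that when applying \Cref{prop: tile by Voronoi} we cannot ensure that the Voronoi cells are convex,'' and then notes that LUR-ness of $\ell_p(\Gamma)$ guarantees the Voronoi cells are starshaped bodies. Your three steps (invoking \Cref{thm: ell_p subgroup} and \Cref{lem: K(X) proximinal}, applying \Cref{prop: tile by Voronoi} via LUR, and repeating the point-countability argument of \Cref{prop: point-countable} through \cite{Kosz_TAMS}*{Lemma 52}) are precisely the ingredients the paper has in mind.
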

This should be compared to the recent result \cites{DEGBnormal1, DEJSnormal2} that every separable Banach space admits a normal tiling by starshaped bodies (a tiling is \emph{normal} if there are constants $r\leq R$ such that every tile contains a ball of radius $r$ and is contained in a ball of radius $R$). Therefore, in the specific case of $\ell_p(\Gamma)$ we obtain a stronger conclusion, because lattice tilings are plainly normal; on the other hand, the results in \cites{DEGBnormal1, DEJSnormal2} apply to all separable Banach spaces, and not just to the non-separable spaces $\ell_p(\Gamma)$.

\begin{remark} Hilbert spaces are characterised by the convexity of Voronoi cells, \cite{Amir}*{(5.7) p.~42}. More precisely, if for all $x,y\in \X$ the Voronoi cells corresponding to $\D= \{x,y\}$ are convex, then $\X$ is a Hilbert space. This easily implies that, even in the case when $\D$ is a subgroup, the corresponding Voronoi cells might not be convex. Therefore, there seems to be no reason why the Voronoi cells corresponding to our construction should be convex. This, of course, doesn't exclude the possibility that for \emph{some} subgroup of $\ell_p(\Gamma)$ the corresponding Voronoi cells might be convex.
\end{remark}

\subsection{Case \texorpdfstring{$p=1$}{p=1}} In conclusion to this section we consider the case $p=1$ and we compare our construction to Klee's original construction from \cite{Klee1}. In particular, we prove \Cref{mainth: ell1 balls}. This case is even simpler and we do not need to consider Voronoi cells. In fact, the set $\D$ from \Cref{thm: ell_p subgroup} is $2$-separated and therefore the balls of radius $1$ centred at points of $\D$ have mutually disjoint interiors. Further, as $\D$ is $1$-dense, such balls cover $\ell_1(\Gamma)$. Therefore, the tiling $\{d+ B_{\ell_1(\Gamma)}\}_{d\in \D}$ is already the lattice tiling whose existence we claimed in \Cref{mainth: ell1 balls} (modulo the clause concerning point-$2$-finiteness, that we prove in \Cref{thm: only vertices touch} below).

At this point we have to compare our construction with the one by Klee. On the one hand, our construction has the additional feature of being a lattice tiling; on the other hand, Klee's tiling is disjoint. We now show the fact that these two constructions are indeed different: Klee's tiling cannot be made into a lattice tiling and our lattice tiling cannot be made to be disjoint.

\begin{proposition}\label{prop: lattice not disjoint} No lattice tiling by balls can be disjoint.
\end{proposition}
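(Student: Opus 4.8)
The plan is to reduce the statement to a purely algebraic fact about the group of translates. Suppose, towards a contradiction, that $\T$ is a disjoint lattice tiling by balls of a normed space $\X$. By \Cref{def: lattice} and the remark immediately after it, after translating and rescaling (operations that preserve being a disjoint tiling as well as the group structure of the index set) I may assume $\T=\{d+B_\X\colon d\in\D\}$ for some subgroup $\D$ of $\X$. I would then unwind the three defining properties of a disjoint tiling in terms of $\D$: since $\T$ covers $\X$, for every $x\in\X$ there is $d\in\D$ with $\|x-d\|\le 1$, so $\D$ is $1$-dense; since the interiors $d+\mathrm{int}\,B_\X$ are pairwise disjoint and the midpoint $\tfrac12(d+h)$ lies in both $d+\mathrm{int}\,B_\X$ and $h+\mathrm{int}\,B_\X$ whenever $\|d-h\|<2$, the set $\D$ is $2$-separated; and since moreover the closed tiles themselves are pairwise disjoint, the same midpoint lies in both $d+B_\X$ and $h+B_\X$ whenever $\|d-h\|\le 2$, so in fact $\|d-h\|>2$ for all distinct $d,h\in\D$. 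Thus it suffices to prove that no normed space admits a $(2+)$-separated $1$-dense subgroup, which is precisely the claim announced in \Cref{rmk: p=1 not strict}.

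So let $\D$ be a $(2+)$-separated and $1$-dense subgroup of $\X$; the key point is that such a $\D$ must be $2$-divisible. Indeed, fix $d\in\D$ and apply $1$-density to the vector $\tfrac12 d$: there is $h\in\D$ with $\|\tfrac12 d-h\|\le 1$, and therefore $\|d-2h\|\le 2$. Since $2h\in\D$ and $\D$ is $(2+)$-separated, this forces $d=2h$; hence $\D\subseteq 2\D$, and as the reverse inclusion is trivial, $\D=2\D$.

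Finally, iterating gives $\D=2^n\D$ for every $n\in\N$, that is, $2^{-n}d\in\D$ for all $d\in\D$ and all $n$. Since $\X\neq\{0\}$, $1$-density forces $\D$ to contain a nonzero vector $d$; then $\{2^{-n}d\colon n\in\N\}$ is an infinite subset of $\D$ with $\|2^{-n}d-2^{-(n+1)}d\|=2^{-(n+1)}\|d\|\to 0$, contradicting the $2$-separation (equivalently, the discreteness) of $\D$. This contradiction proves the proposition.

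The only insight required is in the first step: recognising that the geometric hypothesis ``$\T$ is disjoint'' translates, via $1$-density, into the algebraic identity $\D=2\D$. Once this is observed, the conclusion is immediate, since a nontrivial discrete subgroup cannot be $2$-divisible; accordingly I do not anticipate any genuine obstacle in carrying out the argument.
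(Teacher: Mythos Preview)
Your argument is correct and is essentially the contrapositive of the paper's proof: the paper first picks $d\in\D$ with $d/2\notin\D$ (using discreteness exactly as you do) and then exhibits the two intersecting tiles $d+B_\X$ and $2h+B_\X$, whereas you assume disjointness, derive $\D=2\D$, and reach the same contradiction with discreteness. The key computation---applying $1$-density to $d/2$ to get $h\in\D$ with $\|d-2h\|\le 2$---is identical in both.
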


In the next section we shall expand on this result and deduce in particular that lattice tilings by balls cannot even be star-finite.

\begin{proof} Let $\D$ be the subgroup of $\X$ that generates the tiling. By definition, the tiling is obtained by translates of $B_\X$, hence every non-zero $d\in \D$ satisfies $\|d\|\geq 2$. Notice that there exists $d\in \D$ such that $d/2$ does not belong to $\D$. In fact, otherwise $2^{-k}d$ would belong to $\D$ for all $d\in \D$ and $k\in \N$, contradicting the fact that the group is discrete. 

Because we have a tiling, there exists $h\in \D$ such that $d/2\in h+ B_\X$, which directly implies that $\|d-2h\|\leq 2$. Therefore, the balls $d+ B_\X$ and $2h+ B_\X$ are not disjoint (and obviously $2h\in \D$). Finally, $d+ B_\X$ and $2h+ B_\X$ are distinct balls, because $d$ cannot be equal to $2h$, as $d/2\notin \D$.
\end{proof}

\begin{remark} Notice that we only used that $\D$ is a subgroup to ensure that $2h\in \D$ whenever $h\in\D$. Therefore, the above argument actually shows the following: for every disjoint tiling with balls of radius $1$ there is a center $d$ of some ball such that $2d$ is not the center of any ball in the tiling. 
\end{remark}

We now prove that, albeit not disjoint, the tiling of $\ell_1(\Gamma)$ from \Cref{mainth: ell1 balls} is as close as possible to being disjoint. In fact, not only each point belongs to at most $2$ tiles (\emph{i.e.}, the tiling is point-$2$-finite), but the only points that might belong to more than one tile are the extreme points of the tiles. In particular, if two tiles intersect, they only do in one point.

\begin{theorem} \label{thm: only vertices touch} Let $\D$ be the subgroup of $\ell_1(\Gamma)$ given by \Cref{thm: ell_p subgroup} (for $p=1$). Then 
\begin{equation} \label{eq: contained in the basis}
    \D\cap 2 S_{\ell_1(\Gamma)}\subseteq \{\pm 2e_\alpha\}_{\alpha<\Gamma}.
\end{equation}
In particular, the  tiling $\T=\{ d+B_{\ell_1(\Gamma)} \}_{d\in\D}$ from \Cref{mainth: ell1 balls} is point-$2$-finite and the only points of $B_{\ell_1(\Gamma)}$ that can belong to another tile are $\{\pm e_\alpha\}_{\alpha<\Gamma}$.
\end{theorem}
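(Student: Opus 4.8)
The plan is to revisit the transfinite construction of $\D$ carried out in the proof of \Cref{thm: ell_p subgroup} (for $p=1$) and to track exactly when a group element can have norm $2$. Fix a nonzero $x\in\D$. Since the chain $(\D_\alpha)_{\alpha<\Gamma}$ is increasing and $\D_0=\{0\}$, the set $\{\gamma<\Gamma\colon x\in\D_\gamma\}$ is non-empty and upward closed, hence has a least element $\gamma^*>0$. A minimality argument shows that at step $\gamma^*$ the second (``otherwise'') case of the construction must have occurred: in the first case one would have $\D_{\gamma^*}=\bigcup_{\alpha<\gamma^*}\D_\alpha$, so $x$ would belong to some earlier $\D_\alpha$, contradicting minimality. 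Therefore $\D_{\gamma^*}=\bigcup_{\alpha<\gamma^*}\D_\alpha+(u_{\gamma^*}+e_{\tilde\gamma^*})\ZZ$, with $\|u_{\gamma^*}-d'\|_1>1$ for every $d'\in\bigcup_{\alpha<\gamma^*}\D_\alpha$, and we may write $x=d+n(u_{\gamma^*}+e_{\tilde\gamma^*})$ for some $d\in\bigcup_{\alpha<\gamma^*}\D_\alpha$ and $n\in\ZZ$; minimality forces $n\neq0$ as well, since otherwise $x=d\in\bigcup_{\alpha<\gamma^*}\D_\alpha$.

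Next I would use the support splitting already exploited in the proof of \Cref{thm: ell_p subgroup}: since $\supp(d)\cup\supp(u_{\gamma^*})\subseteq\Gamma_0$ while $\tilde\gamma^*\notin\Gamma_0$, the vectors $d+nu_{\gamma^*}$ and $n e_{\tilde\gamma^*}$ have disjoint supports, whence
\[
\|x\|_1=\|d+nu_{\gamma^*}\|_1+|n|.
\]
Now a short case analysis on $|n|$ finishes \eqref{eq: contained in the basis}. If $|n|=1$, then multiplying by $n$ (and using $n^2=1$) gives $\|d+nu_{\gamma^*}\|_1=\|u_{\gamma^*}-(-nd)\|_1>1$, because $-nd\in\bigcup_{\alpha<\gamma^*}\D_\alpha$; hence $\|x\|_1>2$. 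If $|n|\geq2$, then $\|x\|_1\geq|n|\geq2$, and equality forces $|n|=2$ together with $d+nu_{\gamma^*}=0$, so that the $u_{\gamma^*}$ contributions cancel and $x=n e_{\tilde\gamma^*}=\pm2e_{\tilde\gamma^*}$. Therefore every $x\in\D$ with $\|x\|_1=2$ is of the form $\pm2e_\alpha$, which is exactly \eqref{eq: contained in the basis}.

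Finally, I would deduce the statements about the tiling from \eqref{eq: contained in the basis}. If $x\in B_{\ell_1(\Gamma)}$ lies also in a tile $d+B_{\ell_1(\Gamma)}$ with $0\neq d\in\D$, then $2\leq\|d\|_1\leq\|d-x\|_1+\|x\|_1\leq1+1=2$, so all these inequalities are equalities; in particular $\|d\|_1=2$ (hence $d=\pm2e_\alpha$ for some $\alpha$, by \eqref{eq: contained in the basis}), $\|x\|_1=1$, and $\|d-x\|_1=\|d\|_1-\|x\|_1$. Splitting $x$ along the coordinate $\alpha$, the last identity together with $\|x\|_1=1$ forces the $\alpha$-coordinate of $x$ to equal $\pm1$ (with the same sign as in $d$) and all other coordinates of $x$ to vanish, i.e.\ $x=\pm e_\alpha$ and $d=\pm2e_\alpha$ with matching sign. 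This shows that the only points of $B_{\ell_1(\Gamma)}$ that may belong to another tile are $\{\pm e_\alpha\}_{\alpha<\Gamma}$, and that for such a point the further tile is uniquely determined. Point-$2$-finiteness then follows from the lattice structure: if some $x\in\ell_1(\Gamma)$ belonged to three distinct tiles $d_i+B_{\ell_1(\Gamma)}$ ($i=1,2,3$), then translating by $-d_1$ would produce a point $x-d_1\in B_{\ell_1(\Gamma)}$ lying in two distinct tiles $(d_i-d_1)+B_{\ell_1(\Gamma)}$ ($i=2,3$) with $d_i-d_1\in\D\setminus\{0\}$, contradicting the uniqueness just proved.

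The coordinate bookkeeping (support disjointness, the split of the $\ell_1$-norm) is routine; the only genuinely delicate point is the equality discussion when $|n|\geq2$, where one must observe that the condition $d+nu_{\gamma^*}=0$ is what makes the ``$u_{\gamma^*}$ part'' cancel and leaves precisely $n e_{\tilde\gamma^*}$ — this is exactly the mechanism forcing a norm-$2$ element of $\D$ to be a rescaled basis vector rather than some other vector on $2S_{\ell_1(\Gamma)}$.
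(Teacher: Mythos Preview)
Your proof is correct and follows essentially the same route as the paper's: the same support--splitting identity $\|x\|_1=\|d+nu_{\gamma}\|_1+|n|$ and the same three-case analysis on $|n|$. The only cosmetic difference is that the paper packages the argument as a transfinite induction showing $\D_\alpha\cap 2S_{\ell_1(\Gamma)}\subseteq\{\pm2e_\beta\}_{\beta<\Gamma}$ for every $\alpha$, whereas you pick a minimal $\gamma^*$ with $x\in\D_{\gamma^*}$ to force $n\neq0$ and thereby eliminate the $n=0$ case; the deduction of point-$2$-finiteness via translation by $-d_1$ is likewise the same as the paper's, just spelled out in more detail.
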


\begin{remark} In \Cref{sec: intersections} we will prove that every tile of a lattice tiling of $\X$ by balls intersects $\dens(\X)$ other tiles (\Cref{cor: lattice not star-finite}). Combining this fact and \Cref{thm: only vertices touch}, we see that the subgroup $\D$ of $\ell_1(\Gamma)$ contains `many' vectors of the canonical basis:
\[ \big| \D\cap \{\pm 2e_\alpha\}_{\alpha<\Gamma} \big|= |\Gamma|. \]
\end{remark}

\begin{proof} Let $u_\alpha$ and $\D_\alpha$ ($\alpha<\Gamma$) be defined as in the proof of \Cref{thm: ell_p subgroup}. Since $\D= \bigcup_{\alpha<\Gamma} \D_\alpha$, it is clearly sufficient to prove that, for every $\alpha<\Gamma$,
\begin{equation}\label{eq: only vertices touch}
    \D_\alpha\cap 2 S_{\ell_1(\Gamma)}\subseteq \{\pm 2e_\beta\}_{\beta<\Gamma}.
\end{equation}

The validity of \eqref{eq: only vertices touch} is checked by transfinite induction, following the argument in \Cref{thm: ell_p subgroup}. Suppose that, for some $\gamma< \Gamma$, \eqref{eq: only vertices touch} holds for all $\alpha< \gamma$. If we are in the case when $\D_\gamma= \bigcup_{\alpha<\gamma} \D_\alpha$, then \eqref{eq: only vertices touch} trivially also holds for $\gamma$. Otherwise, every $x\in \D_\gamma$ can be written as $x= d+ n(u_\gamma+ e_{\tilde{\gamma}})$. Further, by \eqref{eq: split norm} we have that
\begin{equation}\label{eq: split norm vertices}
    \|x\|_1= \|d+ n u_\gamma\|_1+ |n|.
\end{equation}
The assumption that $\|x\|_1=2$ then gives $|n|\leq 2$. If $n=0$, then $x=d\in \bigcup_{\alpha< \gamma} \D_\alpha$ and the conclusion follows by the transfinite induction assumption. If $|n|=1$, then $\|d \pm u_\gamma\|_1>1$ by construction, and $\|x\|_1>2$. Thus, we must have that $|n|=2$. But then, \eqref{eq: split norm vertices} yields $d+ n u_\gamma=0$, whence $x=\pm2 e_{\tilde{\gamma}}$.

For the second part, it is sufficient to observe that, if $x\in B_{\ell_1(\Gamma)}$ and there exists $d\in\D\setminus\{0\}$ such that $x\in d+B_{\ell_1(\Gamma)}$, then $\|d\|_1=2$; thus, $d=\pm 2e_\alpha$ for some $\alpha<\Gamma$, by \eqref{eq: contained in the basis}. Hence, $x= \pm e_\alpha$ and $d=2x$, which shows the uniqueness of $d$.
\end{proof}

We conclude this section with a generalisation of \Cref{mainth: ell1 balls}, under renorming. The construction by Klee actually works for all normed spaces of the form $\X\oplus_1 \ell_1(\Gamma)$ under the assumptions that $\Gamma^\omega= \Gamma$ and $\dens(\X)\leq \Gamma$, \cite{Klee1}*{Theorem 1.2}. In particular, if a normed space $\X$ of density $\Gamma$ contains a complemented copy of $\ell_1(\Gamma)$, there is an equivalent norm on $\X$ in which it admits a disjoint tiling with balls of radius $1$. Similarly, a modification of our proof in the spirit of \cite{Klee1}*{Theorem 1.2} gives a lattice tiling with balls. We now explain how to prove the same results without the complementation assumption on the subspace $\ell_1(\Gamma)$. This depends on recent results on octahedrality, \cite{AMR_ell1(kappa)}, and we are most grateful to Esteban Mart\'inez Va\~{n}\'o and Abraham Rueda Zoca for suggesting the result below and pointing to our attention the relevance of \cite{AMR_ell1(kappa)}.

The main result in \cite{AMR_ell1(kappa)} is that a Banach space $\X$ contains an isomorphic copy of $\ell_1(\Gamma)$ if and only if there exists an equivalent norm $\nn\cdot$ on $\X$ such that for every subspace $\Y$ of $\X$ with $\dens(\Y)< \Gamma$ there exists a unit vector $x$ such that
\begin{equation}\label{eq: renorm when ell1 inside}
    \nn{y+ rx} = \nn{y} + |r|\qquad \mbox{ for all } y\in \Y \mbox{ and } r\in \R.
\end{equation}

\begin{theorem}\label{thm: tiling if ell1 inside} Let $\Gamma$ be a cardinal number with $\Gamma^\omega= \Gamma$ and $\X$ be a Banach space of density $\Gamma$. If $\X$ contains an isomorphic copy of $\ell_1(\Gamma)$, then there is an equivalent norm $\nn\cdot$ on $\X$ with the following properties:
\begin{enumerate}
    \item\label{i: disjoint oct} $(\X,\nn\cdot)$ admits a disjoint tiling by balls of radius $1$;
    \item\label{i: lattice oct} $(\X,\nn\cdot)$ admits a lattice tiling by balls.
\end{enumerate}
\end{theorem}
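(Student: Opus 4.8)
The plan is to use the renorming from \cite{AMR_ell1(kappa)} as the single equivalent norm $\nn\cdot$ that simultaneously witnesses \eqref{i: disjoint oct} and \eqref{i: lattice oct}, and then to run two transfinite constructions which are almost verbatim copies of the proof of \Cref{thm: ell_p subgroup} (for the lattice tiling) and of Klee's argument in \cite{Klee1} (for the disjoint tiling). The rôle played in those proofs by a ``fresh basis vector $e_{\tilde{\gamma}}$ with support disjoint from everything constructed so far'' will now be played by an $\nn\cdot$-unit vector $x$ that is $\ell_1$-orthogonal to the subspace built so far, in the sense that $\nn{y+rx}=\nn{y}+|r|$ for all $y$ in that subspace and $r\in\R$. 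Concretely, I would first fix the norm $\nn\cdot$ given by \cite{AMR_ell1(kappa)}, so that for every subspace $\Y\subseteq\X$ with $\dens(\Y)<\Gamma$ there is a unit vector $x$ satisfying \eqref{eq: renorm when ell1 inside}; and since $\dens(\X)=\Gamma$ and $\Gamma^\omega=\Gamma$, the Schmidt--Stone theorem recalled in \Cref{sec: prelim} gives $|\X|=\Gamma$, so we may fix an enumeration $\X=\{u_\alpha\}_{\alpha<\Gamma}$ with $u_0=0$.

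For \eqref{i: lattice oct} I would build an increasing chain $(\D_\gamma)_{\gamma<\Gamma}$ of subgroups of $\X$ such that each $\D_\gamma$ is $2$-separated with respect to $\nn\cdot$, has $|\D_\gamma|\le\max\{|\gamma|,\omega\}$, and contains some $d$ with $\nn{u_\gamma-d}\le1$. The only non-trivial inductive step is when $\nn{u_\gamma-d}>1$ for every $d\in\bigcup_{\alpha<\gamma}\D_\alpha$: there I would let $\Y\coloneqq\spn\big(\bigcup_{\alpha<\gamma}\D_\alpha\cup\{u_\gamma\}\big)$, which has density $\le\max\{|\gamma|,\omega\}<\Gamma$, take the unit vector $x$ supplied by \eqref{eq: renorm when ell1 inside}, and set $\D_\gamma\coloneqq\bigcup_{\alpha<\gamma}\D_\alpha+(u_\gamma+x)\ZZ$. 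That $\D_\gamma$ is $2$-separated is checked exactly as in \Cref{thm: ell_p subgroup}: for a non-zero $d+n(u_\gamma+x)$ one uses $\nn{d+nu_\gamma+nx}=\nn{d+nu_\gamma}+|n|$ and splits into $n=0$ (induction), $|n|=1$ (where $\nn{\pm d-u_\gamma}>1$ since $\pm d\in\bigcup_{\alpha<\gamma}\D_\alpha$), and $|n|\ge2$ (trivially $\ge2$). Taking $\D\coloneqq\bigcup_{\alpha<\Gamma}\D_\alpha$ one obtains a $2$-separated, $1$-dense subgroup of $(\X,\nn\cdot)$, so $\{d+B_{\nn\cdot}\colon d\in\D\}$ covers $\X$, has pairwise disjoint interiors (two meeting interiors would force $\nn{d-d'}<2$), and is a lattice tiling by balls in the sense of \Cref{def: lattice}.

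For \eqref{i: disjoint oct} I would run the same scheme but \emph{without} passing to a subgroup: at the non-trivial step I adjoin only the single point, $\D_\gamma\coloneqq\bigcup_{\alpha<\gamma}\D_\alpha\cup\{u_\gamma+x\}$. The separation then becomes strict, since $\nn{(u_\gamma+x)-d}=\nn{(u_\gamma-d)+x}=\nn{u_\gamma-d}+1>2$ for every $d$ chosen before; hence $\D\coloneqq\bigcup_{\alpha<\Gamma}\D_\alpha$ is $(2+)$-separated and $1$-dense, the closed balls $\{d+B_{\nn\cdot}\colon d\in\D\}$ are pairwise disjoint and cover $\X$, and they form a disjoint tiling by balls of radius $1$. (Note that by \Cref{prop: lattice not disjoint} this disjoint tiling cannot itself be a lattice, which is precisely why the two items call for the two different constructions, although both use the same renorming $\nn\cdot$.)

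The conceptual content is entirely supplied by the renorming of \cite{AMR_ell1(kappa)}; granted it, both arguments are routine adaptations of proofs already in the paper, and I do not expect a genuine obstacle. The points requiring a little care are: the cardinality bookkeeping, i.e.\ checking $\dens(\Y)<\Gamma$ at each stage — this is exactly where $\dens(\X)=\Gamma$ and $\Gamma^\omega=\Gamma$ are used (the latter also to produce the enumeration of $\X$ via Schmidt--Stone); and the fact that the adjoined vector $u_\gamma+x$ is genuinely new, which holds because $d=u_\gamma+x$ would give $\nn{u_\gamma-d}=\nn{x}=1$, contradicting the case hypothesis, so the induction keeps making progress. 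Limit stages are automatic, since an increasing union of subgroups is a subgroup and an increasing union of $r$-separated sets is $r$-separated.
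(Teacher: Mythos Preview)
Your proposal is correct and follows essentially the same approach as the paper: fix the renorming from \cite{AMR_ell1(kappa)} and observe that the $\ell_1$-orthogonality property \eqref{eq: renorm when ell1 inside} is precisely what is needed to replace the split-norm computation \eqref{eq: split norm} in the proof of \Cref{thm: ell_p subgroup}, so that the transfinite construction goes through verbatim; for \eqref{i: disjoint oct} one swaps in Klee's single-point adjunction instead of the subgroup step. Your write-up is in fact more detailed than the paper's, which simply asserts that the argument ``carries over'' once one notices that \eqref{eq: renorm when ell1 inside} suffices for \eqref{eq: split norm}.
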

Needless to say, the advantage of \Cref{mainth: ell1 balls} over the present one is that \Cref{mainth: ell1 balls} holds for the original norm.

\begin{proof} We explain the proof of \eqref{i: lattice oct}; the argument for \eqref{i: disjoint oct} is similar and only requires replacing our argument as in \Cref{mainth: ell1 balls} with Klee's argument in \cite{Klee1}*{Theorem 1.2}. By \cite{AMR_ell1(kappa)}, there exists a norm $\nn\cdot$ on $\X$ such that for every subspace $\Y$ of $\X$ with $\dens(\Y)< \Gamma$ there exists $x\in \Y$ such that \eqref{eq: renorm when ell1 inside} holds. Crucially, the only point in the proof of \Cref{thm: ell_p subgroup} where the $\ell_p$-norm was used is \eqref{eq: split norm}; however, it is clear that the validity of \eqref{eq: renorm when ell1 inside} is enough for performing the computation in \eqref{eq: split norm}. Therefore, the very same argument in the proof of \Cref{thm: ell_p subgroup} carries over to prove that $(\X,\nn\cdot)$ contains a $2$-separated and $1$-dense subgroup. As for \Cref{mainth: ell1 balls}, \eqref{i: lattice oct} immediately follows.
\end{proof}

\section{Lattice tilings are not star-finite}\label{sec: intersections}
In  this section we generalise the observation that lattice tilings are not disjoint (\Cref{prop: lattice not disjoint}), by investigating how many tiles must each tile of a lattice tiling intersect. In the main result of the section, \Cref{cor: lattice not star-finite} below, we show that this cardinality is as large as possible (\emph{i.e.}, equal to the cardinality of the tiling). Our main tool will be the following more general result.

\begin{theorem}\label{thm: 2theta intersects a lot}
Let $\D$ be a subgroup of an infinite-dimensional normed space $\X$ of density $\Gamma$. If $\D$ is $r$-dense, then $|\D\cap 2r B_\X|\geq\Gamma$.
\end{theorem}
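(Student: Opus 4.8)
The plan is to argue by contradiction: suppose $|\D\cap 2r B_\X|<\Gamma$, and derive that $\X$ cannot have density $\Gamma$. Let me write $S\coloneqq\D\cap 2r B_\X$ and let $\Y$ be the closed linear span of $S$; since $|S|<\Gamma$ and $\Gamma$ is infinite, we have $\dens(\Y)<\Gamma$, so $\Y$ is a proper closed subspace and in particular $\X\setminus\Y\neq\emptyset$. The key observation I would exploit is that $\D$ being $r$-dense constrains how far the group can ``reach'' outside $\Y$: if $x\in\X$ and $d\in\D$ with $\|x-d\|\le r$, then for any other $h\in\D$ with $\|x-h\|\le r$ we get $\|d-h\|\le 2r$, so $d-h\in S\subseteq\Y$. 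Thus all points of $\D$ within distance $r$ of a fixed $x$ lie in a single coset $d+\Y$.

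The heart of the argument should then be a connectedness/continuity step showing that in fact \emph{every} point of $\D$ that lies within distance $r$ of \emph{some} point of $\X$ — i.e. all of $\D$, since $\D$ is $r$-dense everywhere — lies in one coset of $\Y$. Concretely, I would argue: given $d_0,d_1\in\D$, join them by the segment $[d_0,d_1]$ (or rather, cover the segment by finitely many balls of radius $r$ using compactness of the segment and $r$-density), and at each point $z$ on the segment pick $d_z\in\D$ with $\|z-d_z\|\le r$; along overlapping balls the associated group elements differ by something in $S\subseteq\Y$, so by a chaining argument $d_1-d_0\in\Y$. Hence $\D\subseteq d_0+\Y$ for a fixed $d_0$, and so $\D\subseteq\Y$ after translating (or directly, since $0\in\D$, we get $\D\subseteq\Y$). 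But then $\D$ being $r$-dense forces $\X=\overline{\D+rB_\X}\subseteq\Y$, contradicting that $\Y$ is proper. This yields $\dens(\X)\le\dens(\Y)<\Gamma$, a contradiction.

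I expect the main obstacle to be making the chaining argument fully rigorous and, more importantly, getting the \emph{overlap} right so that consecutive balls force the group elements into the same coset. The subtlety is that two points $z,z'$ on the segment with $\|z-z'\|$ small don't automatically give $\|d_z-d_{z'}\|\le 2r$: we only know $\|z-d_z\|\le r$ and $\|z'-d_{z'}\|\le r$, so $\|d_z-d_{z'}\|\le 2r+\|z-z'\|$, which is strictly bigger than $2r$. To fix this I would not use the membership $d_z-d_{z'}\in S$ directly between consecutive sample points, but instead note that at a \emph{single} sample point $z$ one may have several choices of nearby group element; the cleaner route is to cover $[d_0,d_1]$ finely enough and at each $z$ choose $d_z$, then observe that for the \emph{same} $z$, any two valid choices differ by an element of $S$, and use a Vitali-type overlap where consecutive intervals share a common point $z$ at which \emph{both} $d_z$ (from the left) and $d_{z'}$ (from the right) are within $r$ — but that requires the intervals to overlap, not just touch. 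So I would refine: take a partition $0=t_0<t_1<\dots<t_N=1$ of $[0,1]$ and points $z_i=(1-t_i)d_0+t_i d_1$; by $r$-density and the triangle inequality, if the mesh is small enough that $\|z_i-z_{i+1}\|$ is tiny, then $d_{z_i}$ is within $r+\text{(tiny)}$ of $z_{i+1}$, but not within $r$. The genuinely correct fix is instead to use that $\D$ is a \emph{group}: consider the subgroup $\D\cap\Y$ and the quotient map; or, alternatively, observe $B_{r}(z_i)\cap\D$ and $B_r(z_{i+1})\cap\D$ — one shows directly that if the mesh is at most, say, $r$, then these two finite sets have the property that elements of one are within $3r$ of elements of the other, which is not quite enough, so one really does need to exploit $r$-density more cleverly, perhaps by a doubling/iteration on a finer scale or by invoking proximinality-type facts. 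I would spend the bulk of the write-up nailing down exactly this overlap mechanism — likely by choosing the segment sample points at spacing $<r$ so that $z_{i+1}\in B_r(z_i)$, hence there is \emph{some} $d\in\D$ (namely a near-point of $z_{i+1}$) — no, the honest statement is: pick the mesh so small that $\|z_i - z_{i+1}\| < \delta$ for a $\delta$ to be chosen, and then play the near-point of $z_i$ against the near-point of $z_{i+1}$ via an intermediate near-point of the midpoint, iterating $\lceil\log\rceil$-many times to bring the relevant distance below $2r$. This logarithmic subdivision is, I believe, the crux, and once it is in place the rest (spanning, coset argument, density contradiction) is routine.
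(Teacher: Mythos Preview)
Your chaining argument has a genuine gap that you correctly identify but do not close: from $\|z_i-d_i\|\le r$ and $\|z_{i+1}-d_{i+1}\|\le r$ you only get $\|d_i-d_{i+1}\|\le 2r+\|z_i-z_{i+1}\|$, and no amount of subdivision---logarithmic or otherwise---removes the additive mesh term. The difficulty is real: the paper's \Cref{lem: bdd generator} proves that $\D$ is generated by $\D\cap(2r+\e)B_\X$ for every $\e>0$ by exactly this kind of chaining, and the remark following it exhibits an example showing that $\e=0$ genuinely fails. So the mechanism you are trying to use (direct metric overlap forcing membership in $S$) cannot work at the critical radius $2r$.

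The paper's proof bypasses this obstruction by two different mechanisms. Your setup---taking $\Y=\overline{\spn}(S)$ and passing to cosets---is essentially the start of the paper's Case~2 ($\Gamma>\omega$), but instead of chaining along a segment the paper uses a \emph{halving} trick: for any $d\in\D$ pick $h\in\D$ with $\|d/2-h\|\le r$, so $\|d-2h\|\le 2r$ and hence $d-2h\in S\subseteq\Y$; in the quotient $\H\coloneqq q(\D)\subseteq\X/\Y$ this yields $\H=2\H$, and a cardinality argument (elements of $\H$ of small norm are few) then forces $|\H|<\Gamma$, a contradiction. This halving is perhaps the ``logarithmic'' idea you were reaching for, but it is applied to a single vector $d$, not along a segment, and the conclusion is a divisibility statement in the quotient rather than a coset identity. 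For $\Gamma=\omega$ the cardinality argument degenerates, and the paper instead invokes Sierpi\'nski's theorem on continua: the coset-translates $P_g=(g+\langle S\rangle)+B_\X$ are disjoint and cover $\X$, and because $\langle S\rangle$ is here finitely generated (hence a discrete subgroup of a finite-dimensional subspace) the $P_g$ are \emph{closed}; Sierpi\'nski then forbids a segment from meeting more than one of them. This closedness is exactly what direct chaining cannot deliver.
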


\begin{proof} Up to a scaling, we can assume without any loss of generality that $r=1$. Suppose, towards a contradiction, that $|\D\cap 2 B_\X|< \Gamma$. We begin by observing that, for every infinite cardinal $\Gamma_0$ with $|\D\cap 2 B_\X|\leq \Gamma_0\leq \Gamma$ and for every subspace $\Z$ of $\X$ with $\dens(\Z)\leq \Gamma_0$ we have $|\D\cap \Z|\leq \Gamma_0$. In fact, if $|\D\cap \Z|> \Gamma_0$, there would be a ball of radius $1$ in $\Z$ that contains more than $\Gamma_0$ elements of $\D\cap \Z$. This ball is contained in a ball in $\Z$ of radius $2$ and with center $d\in \D$; hence, $|\D\cap B_2(d)|>\Gamma_0$. Because $\D$ is a subgroup, this implies $|\D\cap 2 B_\X|> \Gamma_0$, a contradiction.

We now distinguish two cases.
\smallskip

\noindent{\bf Case 1: $\Gamma=\omega$.} This case is a finite-dimensional reduction based on Sierpi\'nski's theorem on continua, \cite{Sierpinski}. First of all, taking $\Gamma_0= \Gamma$ in the previous observation, we see that $\D$ is countable. Further, the subgroup $\D_0$ generated by $\D\cap 2 B_{\X}$ is finitely generated. Therefore, $\G=\D/\D_0$ is a countable infinite group. For each $g=[d]=d+\D_0\in \G$, we can consider the closed subset of $\X$ defined as
\[ P_{g}=d+\D_0+B_{\X}= \bigcup_{[d]=g} d+B_\X. \]
Because $\D_0$ is finitely generated, the above union is locally finite; hence, $P_g$ is indeed closed. The crucial point is that $\|d_1 - d_2\|>2$, whenever $[d_1],[d_2]\in \G$ are different. This and the fact that $\D$ is 1-dense yield that:
\begin{enumerate}
    \item\label{i: Pg disjoint} $P_{g_1}\cap P_{g_2}=\emptyset$, for distinct $g_1,g_2\in \G$;
    \item\label{i: Pg cover} $\bigcup_{g\in \G}P_{g}=\X$.
\end{enumerate}
Now take two points that belong to distinct sets $P_g$ and let $\ell$ be the segment joining them. By \eqref{i: Pg disjoint} and \eqref{i: Pg cover} we can write $\ell$ as the disjoint countable union $\bigcup_{g\in \G}(P_{g}\cap \ell)$, where each $P_{g}\cap \ell$ is closed and at least two are non-empty. However, this is impossible by Sierpi\'nski's result \cite{Sierpinski} (\emph{cf.}\ \cite{Kuratowski}*{\S 47, III, Theorem 6}).
\smallskip 

\noindent{\bf Case 2: $\Gamma>\omega$.} Let $\Z$ be the closed span of $\D\cap 2 B_\X$ and $\Gamma_0\coloneqq \max\{|\D\cap 2 B_\X|, \omega\}$. Then, $\dens(\Z)\leq \Gamma_0$ and, as $\Gamma$ is uncountable, $\Gamma_0<\Gamma$. The observation at the beginning of the proof then gives $|\D\cap \Z|\leq \Gamma_0$. Let us denote by $q\colon \X\to \X/\Z$ the canonical quotient map and $\H\coloneqq q(\D)$. Clearly, $\H$ is a subgroup of $\X/\Z$.
\begin{claim}\label{claim: H=2H and others} We claim that:
\begin{enumerate}[label=\normalfont{(C\arabic*)},ref=C\arabic*]
    \item\label{i: claim big card} $|\H|=\Gamma$; 
    \item\label{i: claim small card} the set $\V\coloneqq \{h\in \H\colon\, \|h\|_{\X/\Z}<1/3\}$ has cardinality at most $\Gamma_0$;
    \item\label{i: claim H=2H} the group $\H$ satisfies $\H=2\H$.
\end{enumerate}
\end{claim} 

\begin{proof}[Proof of \Cref{claim: H=2H and others}]\renewcommand\qedsymbol{$\square$} The validity of \eqref{i: claim big card} is clear, because $|\D|\geq \Gamma$, while $|\D\cap \Z|\leq \Gamma_0< \Gamma$.

Let us now prove \eqref{i: claim small card}. Suppose that $|\V|\eqqcolon \Gamma'>\Gamma_0$ and take a family $(d_\alpha+\Z)_{\alpha<\Gamma'}$ of mutually distinct elements of $\V$; then, there exists a family $(z_\alpha)_{\alpha<\Gamma'}\subseteq \Z$ such that $\|d_\alpha+z_\alpha\|<1/3$ ($\alpha<\Gamma'$). Since $\dens(\Z)< \Gamma'$, up to passing to a subfamily still of cardinality $\Gamma'$, we can assume that $\|z_{\alpha_1}-z_{\alpha_2}\|<1/3$ for all distinct $\alpha_1,\alpha_2\in \Gamma'$. Thus
\[ \|d_{\alpha_1}-d_{\alpha_2}\|\leq \|d_{\alpha_1}+z_{\alpha_1}\|+\|d_{\alpha_2}+z_{\alpha_2}\|+\|z_{\alpha_1}-z_{\alpha_2}\|<1; \]
hence, $(d_\alpha -d_0)_{\alpha< \Gamma'}\subseteq \D \cap B_\X$, a contradiction.

It remains to prove \eqref{i: claim H=2H}. For this, it is sufficient to prove that $\D\subseteq 2\D+ \Z$. Indeed, if this is the case, we have
\[ \H= q(\D)\subseteq 2q(\D)+ q(\Z)= 2\H. \]
Thus, take any $d\in \D$. Since $\D$ is 1-dense, there exists $g\in \D$ such that $\|d/2-g\|\leq 1$, whence $\|d-2g\|\leq 2$. By definition of $\Z$, this means that $d-2g\in \Z$. But then, $d\in 2\D+ \Z$, which concludes the proof of our claim.
\end{proof}
 
Now, by \eqref{i: claim H=2H}, for every $h\in \H$ we have that $h/2\in \H$ and iteratively $2^{-n}h\in \H$. Thus,
\[ \H=\bigcup_{n\in\N}2^n \V, \]
which implies $|\H|\leq \Gamma_0$ because of \eqref{i: claim small card} and the fact that $\Gamma_0$ is infinite. This contradicts \eqref{i: claim big card}, and the conclusion holds.
\end{proof}

We now study intersections of $r$-dense subgroups with balls of radius smaller than $2r$, the natural scaling factor being given by the Kottman constant. On the other hand, we obtain intersections that, in general, have a smaller cardinality. We shall need the following simple fact.

\begin{fact}\label{fact: ramsey} Let $\W$ be an infinite subset of a normed space $\X$  and $r>0$. If
\[ |B_r(w)\cap \W| <|\W| \]
for all $w\in \W$, then $\W$ contains an infinite $(r+)$-separated set.    
\end{fact}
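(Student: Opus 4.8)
The plan is to prove the contrapositive, essentially a Ramsey-flavoured pigeonhole argument on the infinite set $\W$. Suppose that $\W$ does not contain any infinite $(r+)$-separated subset; I want to conclude that $|B_r(w)\cap \W| = |\W|$ for some $w\in \W$. First I would pick a maximal $(r+)$-separated subset $F$ of $\W$ (which exists by Zorn's lemma); by hypothesis $F$ is finite, say $F = \{w_1,\dots,w_k\}$. Maximality means that every $w\in \W$ lies within distance $\leq r$ of some $w_i$, that is, $\W = \bigcup_{i=1}^k \big(B_r(w_i)\cap \W\big)$. Since $\W$ is infinite and this is a \emph{finite} union, at least one of the sets $B_r(w_i)\cap \W$ must have cardinality $|\W|$: indeed, if every piece had cardinality strictly less than $|\W|$, then when $|\W|$ is a successor cardinal (in particular when $\W$ is countable) a finite union of smaller sets is smaller than $|\W|$, and when $|\W|$ is a limit cardinal this also holds because $|\W|$ has uncountable cofinality is not needed — a finite union of sets each of cardinality $<|\W|$ always has cardinality $<|\W|$, as finite sums of cardinals are just the maximum. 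Hence some $w_i\in \W$ satisfies $|B_r(w_i)\cap \W| = |\W|$, contradicting the assumption in the statement.

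Stepping back to phrase it directly rather than contrapositively: assume $|B_r(w)\cap \W| < |\W|$ for all $w\in \W$, and build an infinite $(r+)$-separated set by transfinite recursion. At each stage, having chosen an $(r+)$-separated family $(w_\beta)_{\beta<\alpha}$ with $\alpha < \mathrm{cf}(|\W|)$ (so in particular $\alpha$ finite suffices for the ``infinite'' conclusion we need, but the recursion works more generally), the set $\bigcup_{\beta<\alpha} \big(B_r(w_\beta)\cap \W\big)$ has cardinality $<|\W|$, hence does not cover $\W$; choose $w_\alpha\in \W$ outside it, so that $\|w_\alpha - w_\beta\| > r$ for all $\beta<\alpha$. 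Running this for $\alpha<\omega$ already produces the desired infinite $(r+)$-separated subset of $\W$.

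The argument is genuinely routine and I don't expect a real obstacle; the only point requiring minimal care is the cardinal arithmetic — making sure that a countable (indeed finite) union of sets of cardinality $<|\W|$ is again of cardinality $<|\W|$, which for the finite union needed here is immediate since a finite supremum of cardinals each below $|\W|$ stays below $|\W|$. I would present the short contrapositive version via a maximal $(r+)$-separated set, since it is the cleanest and avoids recursion altogether: extract a maximal finite $(r+)$-separated $F\subseteq\W$, note $\W$ is the finite union of the balls $B_r(w)$ for $w\in F$ intersected with $\W$, and conclude by finiteness of the union that one such intersection has full cardinality.
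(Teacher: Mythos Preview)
Your proposal is correct, and your direct recursive construction is exactly the paper's proof: pick $w_1\in\W$, and at stage $n$ choose $w_{n+1}\in\W\setminus\bigcup_{i\leq n}B_r(w_i)$, which is possible since a finite union of sets of cardinality $<|\W|$ cannot cover $\W$. Your contrapositive version via a maximal $(r+)$-separated set is just a repackaging of the same pigeonhole step, so there is no substantive difference.
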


\begin{proof} Take any $w_1\in \W$; by assumption, there is $w_2\in \W\setminus B_r(w_1)$. Inductively, choose $w_{n+1}\in \W\setminus (B_r(w_1)\cup\dots \cup B_r(w_n))$. The sequence $(w_n)_{n=1}^\infty$ is clearly $(r+)$-separated.
\end{proof}

\begin{proposition}\label{prop: singularpointrick} Let $\X$ be an infinite-dimensional Banach space and $\D$ be a 1-dense subgroup of $\X$. Then, for every $\e>0$, the following assertions hold:
\begin{enumerate}
    \item\label{eq: singular} the set $\D\cap \bigl(K(\X)+\e\bigr)B_\X$ is infinite;
    \item\label{eq: Gamma-singular} if $\X$ is a Hilbert space and $\D$ is $\sqrt{2}$-separated, then $|\D\cap (\sqrt 2+\e)B_\X|=\dens(\X)$.
\end{enumerate}     
\end{proposition}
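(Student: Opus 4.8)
The plan is to run the same ``singular-point trick'' in both items. In both cases the $1$-density of $\D$ makes $\mathcal{C}\coloneqq\{d+B_\X\}_{d\in\D}$ a covering of $\X$ by bounded convex bodies, and the crux will be to locate in $\mathcal{C}$ a singular point of the appropriate order. The underlying mechanism is, in a sense, dual to the proof of \Cref{lem: K(X) proximinal}: if a ball $B_{1+\delta}(x_0)$ meets infinitely many members of $\mathcal{C}$ --- that is, if $\D\cap B_{1+\delta}(x_0)$ is infinite --- while $|\D\cap(K(\X)+\e)B_\X|=N<\omega$, then \Cref{fact: ramsey}, applied to $\D\cap B_{1+\delta}(x_0)$ (each of whose points lies within distance $K(\X)+\e$ of at most $N$ others, since $\D$ is a group), extracts an infinite $(K(\X)+\e)$-separated subset of $B_{1+\delta}(x_0)$; translating by $-x_0$ and rescaling by $(1+\delta)^{-1}$ produces an infinite $\tfrac{K(\X)+\e}{1+\delta}$-separated subset of $B_\X$, and for $\delta<\e/K(\X)$ this contradicts the definition of the Kottman constant.

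For the first assertion I may assume $K(\X)<2$, since if $K(\X)=2$ then $K(\X)+\e>2$ and the claim is immediate from \Cref{thm: 2theta intersects a lot} (the set $\D\cap 2B_\X$ being already infinite). When $K(\X)<2$, James' distortion theorem (see \Cref{sec: prelim}) prevents $\X$ from containing $c_0$, so $\X$ is not $c_0$-saturated; hence by Corson's theorem \cite{Corson}, in the form extended by Fonf and Zanco \cite{FZ_PAMS06} to spaces that are not $c_0$-saturated, the covering $\mathcal{C}$ is not locally finite and therefore admits a singular point $x_0$. Assuming towards a contradiction that $\D\cap(K(\X)+\e)B_\X$ is finite, the singular point trick described above, with $\delta$ chosen in $(0,\e/K(\X))$, yields the desired contradiction.

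For the second assertion, $K(\X)=\sqrt2$. The inequality $|\D\cap(\sqrt2+\e)B_\X|\le\dens(\X)$ is immediate, since a $\sqrt2$-separated subset of a metric space of density $\dens(\X)$ has at most $\dens(\X)$ elements. For the reverse inequality, write $\Gamma\coloneqq\dens(\X)$ and suppose towards a contradiction that $\Gamma'\coloneqq|\D\cap(\sqrt2+\e)B_\X|<\Gamma$; by the first assertion $\Gamma'\ge\omega$, whence $\Gamma>\omega$. Because $\X$ is reflexive, the covering $\mathcal{C}$ admits a $\Gamma$-singular point $x_0$, so $|\D\cap B_{1+\delta}(x_0)|\ge\Gamma>\Gamma'$ for every $\delta>0$. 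Since $|\D\cap B_{\sqrt2+\e}(d)|=\Gamma'<|\D\cap B_{1+\delta}(x_0)|$ for every $d\in\D$, \Cref{fact: ramsey} still produces an infinite $(\sqrt2+\e)$-separated subset of $B_{1+\delta}(x_0)$; choosing $\delta<\e/\sqrt2$ and rescaling into $B_\X$ gives an infinite set that is more than $\sqrt2$-separated, contradicting $K(\X)=\sqrt2$.

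The step I expect to require the most care is the production, for the second assertion, of a singular point of order exactly $\Gamma=\dens(\X)$: for the first assertion it is enough that $\mathcal{C}$ fails to be locally finite, whereas here one needs the quantitative statement that a covering of a reflexive Banach space of density $\Gamma$ by bounded convex bodies must possess a $\Gamma$-singular point. I would derive this from \Cref{thm: 2theta intersects a lot} --- which already guarantees that $B_\X$ meets at least $\Gamma$ members of $\mathcal{C}$ --- together with a Corson-type localisation argument exploiting that bounded subsets of the Hilbert space $\X$ are relatively weakly compact and that convex bodies are weakly closed; the circle of ideas in \cites{Corson, FZ_PAMS06, FPZsingularpoints} should supply the needed ingredient.
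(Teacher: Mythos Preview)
Your treatment of item \eqref{eq: singular} is essentially the paper's: reduce via \Cref{thm: 2theta intersects a lot} to the case where $\X$ does not contain $c_0$, invoke \cite{FZ_PAMS06} to obtain a singular point for the ball covering, and run the rescaling argument through \Cref{fact: ramsey}. That part is fine.

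For item \eqref{eq: Gamma-singular} there is a genuine gap, which you yourself flag. You need a $\Gamma$-singular point for $\mathcal{C}=\{d+B_\X\}_{d\in\D}$, and you propose to extract one from \Cref{thm: 2theta intersects a lot} plus weak compactness and a ``Corson-type localisation''. But knowing $|\D\cap 2B_\X|\ge\Gamma$ only tells you that $\Gamma$-many balls meet $B_\X$; it does not by itself locate a single point every whose neighbourhood meets $\Gamma$-many of them. And \cite{FPZsingularpoints}*{Proposition~1}, which you gesture towards, concerns extreme points of tiles in a \emph{tiling}; your $\mathcal{C}$ is only a covering (the balls overlap, since $\D$ is $\sqrt{2}$-separated, not $2$-separated), so that result does not apply to $\mathcal{C}$ directly.

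The paper closes the gap by a different, concrete route that exploits the $\sqrt{2}$-separation hypothesis rather than just reflexivity. Since $K(\X)=\sqrt{2}$, \Cref{lem: K(X) proximinal} makes $\D$ proximinal, and then \Cref{prop: tile by Voronoi} ensures that the Voronoi cells $\{V_d\}_{d\in\D}$ form a genuine tiling of $\X$ by bounded convex bodies. Krein--Milman yields an extreme point $x_0$ of the weakly compact body $V_0$, and \cite{FPZsingularpoints}*{Proposition~1} --- now legitimately applied to this tiling --- makes $x_0$ a $\Gamma$-singular point for $\{V_d\}$. Since $V_d\subseteq d+B_\X$, it follows that $|\D\cap B_{1+\delta}(x_0)|=\Gamma$ for every $\delta>0$, which is exactly the input your rescaling/\Cref{fact: ramsey} step requires. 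The missing idea, then, is to pass from the ball covering to the Voronoi tiling before invoking \cite{FPZsingularpoints}.
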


\begin{proof} We begin with the proof of \eqref{eq: singular}, that is inspired by \cite{CPZ}*{Theorem 1.1}. If $\X$ contains an isomorphic copy of $c_0$, then $K(\X)=2$ and our conclusion follows from \Cref{thm: 2theta intersects a lot}. Hence, we assume that $\X$ does not contain  isomorphic copies of $c_0$. Fix $\e>0$ and take $\delta>0$ such that $\frac{K(\X)+\e}{1+\delta}\geq K(\X)+\frac{\e}{2}$. As $\X$ does not contain copies of $c_0$, the covering $\B\coloneqq \{d+B_{\X}\}_{d\in \D}$ of $\X$ admits a singular point $x_0$, by \cite{FZ_PAMS06}*{Corollary~5}. Then the ball $B_\delta(x_0)$ intersects infinitely many different members of $\mathcal B$; thus, the set $\W_\delta\coloneqq B_{1+\delta}(x_0)\cap \D$ is infinite.

Now, suppose towards a contradiction that $\bigl|\D\cap \bigl(K(\X)+\e\bigr)B_\X\bigr|=n\in \N$ and observe that, since $\D$ is a subgroup of $\X$, we have in particular
\[ \big| \W_\delta\cap B_{K(\X)+\e}(w)\big|\leq n,\qquad \mbox{for all } w\in \W_\delta. \]
An application of \Cref{fact: ramsey} yields a $\bigl(K(\X)+\e\bigr)$-separated sequence $(d_n)_{n=1}^\infty\subseteq \W_\delta$. Therefore, the sequence $\left(\frac{d_n-x_0}{1+\delta}\right)_{n=1}^\infty\subseteq B_\X$ is $\bigl(K(\X)+\frac{\e}{2}\bigr)$-separated, which contradicts the definition of $K(\X)$. 

In order to prove \eqref{eq: Gamma-singular} we proceed similarly, taking into account that $K(\ell_2(\Gamma))=\sqrt{2}$. By \Cref{lem: K(X) proximinal}, the set $\D$ is automatically proximinal and hence, by \Cref{prop: tile by Voronoi}, the family $\T\coloneqq \{V_d\}_{d\in\D}$ consisting of the associated Voronoi cells is a tiling of $\ell_2(\Gamma)$ by bounded convex bodies. By the Krein--Milman theorem the (weakly compact) convex body $V_0$ admits an extreme point $x_0$; by \cite{FPZsingularpoints}*{Proposition~1}, $x_0$ is a $\Gamma$-singular point for $\T$. Thus, for every $\delta>0$, the ball $B_\delta(x_0)$ intersects $\Gamma$-many different members of $\T$. Since $V_d\subseteq d+B_\X$, whenever $d\in\D$, we necessarily have $\bigl|B_{1+\delta}(x_0)\cap \D\bigr|=\Gamma$ and we can proceed as in \eqref{eq: singular}. Let $\W_\delta \coloneqq B_{1+\delta}(x_0)\cap \D$ and suppose on the contrary that $\bigl|\D\cap \bigl(K(\X)+\e\bigr)B_\X\bigr|\eqqcolon \Gamma_0<\Gamma$. Since $\D$ is a subgroup of $\X$, we have
\[ \big| \W_\delta\cap B_{K(\X)+\e}(w) \big|\leq \Gamma_0, \qquad \mbox{for all } w\in \W_\delta. \]
Finally, \Cref{fact: ramsey} gives a contradiction as in the proof of \eqref{eq: singular}. 
\end{proof}

We now turn to rephrasing the previous results in terms of lattice tilings by balls. As an immediate consequence of \Cref{thm: 2theta intersects a lot} we get the following.
\begin{corollary}\label{cor: lattice not star-finite} Let $\T$ be a lattice tiling by balls of an infinite-dimensional normed space $\X$. Then, for every $T_0\in \T$, we have
\[ \big| \{T\in \T\colon T\cap T_0 \neq \emptyset\} \big|= |\T|. \]
In particular, no lattice tiling of $\X$ with balls is star-finite.    
\end{corollary}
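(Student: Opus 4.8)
The plan is to read the statement off \Cref{thm: 2theta intersects a lot} via a short reduction. Write $\T=\{d+B_\X\colon d\in\D\}$, where $\D$ is a discrete subgroup of $\X$; the map $d\mapsto d+B_\X$ is then a bijection of $\D$ onto $\T$, so $|\T|=|\D|$. First I would record the two elementary facts that a tiling by translates of $B_\X$ forces on $\D$: since $\T$ covers $\X$, the subgroup $\D$ is $1$-dense; and since the tiles have pairwise disjoint interiors, $\D$ is $2$-separated — indeed, if $d,h\in\D$ are distinct with $\|d-h\|<2$, then the midpoint $\tfrac12(d+h)$ lies in the interior of both $d+B_\X$ and $h+B_\X$, a contradiction.

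Next I would fix $T_0=d_0+B_\X\in\T$ and use the group structure to normalise it: translation by $-d_0$ is a bijection of $\T$ onto itself that preserves intersections and carries $T_0$ to $B_\X$, so we may assume $T_0=B_\X$. Since for $d\in\D$ one has $(d+B_\X)\cap B_\X\neq\emptyset$ precisely when $\|d\|\leq 2$, the family $\{T\in\T\colon T\cap T_0\neq\emptyset\}$ is in bijection with $\D\cap 2B_\X$. It then remains only to compare cardinalities. Set $\Gamma\coloneqq\dens(\X)$. As $\D$ is $2$-separated, sending each $d\in\D$ to a point of a fixed dense set of size $\Gamma$ within distance $\tfrac12$ of $d$ is injective, so $|\D|\leq\Gamma$; and \Cref{thm: 2theta intersects a lot}, applied with $r=1$ to the $1$-dense subgroup $\D$, gives $|\D\cap 2B_\X|\geq\Gamma$. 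Hence
\[ |\T|=|\D|\leq\Gamma\leq|\D\cap 2B_\X|=\bigl|\{T\in\T\colon T\cap T_0\neq\emptyset\}\bigr|\leq|\D|=|\T|, \]
so equality holds throughout, which is the claimed identity.

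The ``in particular'' clause then follows at once, since $\X$ infinite-dimensional makes $|\T|=\Gamma$ infinite, so each tile meets infinitely many tiles and $\T$ is not star-finite. I do not anticipate a genuine obstacle: all the real content sits in \Cref{thm: 2theta intersects a lot}, and the only steps needing a moment's care are deriving $1$-density and $2$-separation of $\D$ from the tiling axioms and invoking the group structure to reduce to $T_0=B_\X$.
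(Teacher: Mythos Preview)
Your proof is correct and follows the same route as the paper, which simply records the corollary as an ``immediate consequence'' of \Cref{thm: 2theta intersects a lot}; you have spelled out precisely the reduction the paper leaves implicit (the $1$-density and $2$-separation of $\D$, the normalisation $T_0=B_\X$, and the cardinality comparison $|\D|\leq\dens(\X)\leq|\D\cap 2B_\X|$).
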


In conclusion to this section we give an alternative shorter proof of \Cref{cor: lattice not star-finite}, which uses the fact that discrete subgroups of normed spaces are free (Abelian). For countable subgroups, this is a folklore result that can be found, \emph{e.g.}, in \cites{ADG, Lawrence, Sidney, Zorzitto}. The general case is due to Stepr\={a}ns \cite{Steprans}. We refer to \Cref{sec: subgroup} for more details on free Abelian groups and for an alternative proof of Stepr\={a}ns' result.

\begin{proof}[Second proof of \Cref{cor: lattice not star-finite}] Clearly, it is enough to prove the claim for $T_0$ being the ball centred at $0$. The group $\D$ that generates the tiling is obviously $2$-separated, hence free by Stepr\={a}ns' result mentioned above. Thus, we can fix a basis $\Lambda$ for $\D$. Our aim is to prove that
\[ \{d\in \D\colon \|d\|\leq 2\} \]
has cardinality $\Gamma\coloneqq |\T|$ (which clearly coincides with $|\D|$).

Towards a contradiction, suppose that this is false. Every element $d\in \D$ with $\|d\|\leq 2$ can be written as a finite (integer) linear combination of elements of $\Lambda$; hence, there exists a subset $\Lambda_0$ of $\Lambda$ of cardinality less than $\Gamma$ and such that the group $\G_0$ generated by $\Lambda_0$ contains $\{d\in \D\colon \|d\|\leq 2\}$. The counterpart to \Cref{claim: H=2H and others} is the following claim.
\begin{claim}\label{claim: usual 2D stuff} $\D\subseteq 2\D+ \G_0$, hence $\D/\G_0= 2\D/\G_0$.
\end{claim}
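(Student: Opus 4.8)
The plan is to mimic exactly the proof of \eqref{i: claim H=2H} in \Cref{claim: H=2H and others}, which in turn is the standard ``2$\D$'' argument appearing throughout this section. First I would observe that since $\D$ generates a lattice tiling by balls of radius $1$, the group $\D$ is $2$-separated (every nonzero $d\in\D$ has $\|d\|\geq 2$) and, being a tiling, it is also $1$-dense. Now fix any $d\in\D$. By $1$-density there exists $g\in\D$ with $\|d/2-g\|\leq 1$, hence $\|d-2g\|\leq 2$. Therefore $d-2g$ lies in $\{h\in\D\colon \|h\|\leq 2\}\subseteq \G_0$, which gives $d\in 2g+\G_0\subseteq 2\D+\G_0$. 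Since $d\in\D$ was arbitrary, $\D\subseteq 2\D+\G_0$.

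From the inclusion $\D\subseteq 2\D+\G_0$ the quotient identity follows formally: passing to the quotient group $\D/\G_0$ and writing $\pi\colon\D\to\D/\G_0$ for the canonical projection, $\pi(\D)\subseteq 2\pi(\D)+\pi(\G_0)=2\pi(\D)$, because $\pi(\G_0)=\{0\}$. Of course the reverse inclusion $2(\D/\G_0)\subseteq\D/\G_0$ is trivial since $\D/\G_0$ is a group, so $\D/\G_0=2(\D/\G_0)$, which is the claim. This divisibility-by-$2$ of the quotient will then be combined with the later step bounding the cardinality of the ``small'' coset classes (the elements $h\in\D/\G_0$ of norm less than $1/3$, to be made precise in the surrounding argument) to derive the contradiction $|\D/\G_0|\leq |\Lambda_0|<\Gamma$, exactly paralleling how \eqref{i: claim H=2H} was used together with \eqref{i: claim small card}.

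There is essentially no obstacle here: the only thing being used about $\G_0$ is that it contains every element of $\D$ of norm at most $2$, and the only facts about $\D$ are $2$-separatedness (so that ``norm $\leq 2$'' is the relevant threshold) and $1$-density (so that halves can be approximated within distance $1$). The proof is a verbatim transcription of the third bullet in the proof of \Cref{claim: H=2H and others}, with $\Z$ replaced by $\G_0$, and I would present it in two lines.

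\begin{proof}[Proof of \Cref{claim: usual 2D stuff}] It suffices to prove $\D\subseteq 2\D+ \G_0$, since applying the canonical projection $\pi\colon \D\to \D/\G_0$ then yields $\D/\G_0= \pi(\D)\subseteq 2\pi(\D)+ \pi(\G_0)= 2\D/\G_0$, while the reverse inclusion is automatic. So fix $d\in \D$. As $\D$ is $1$-dense, there exists $g\in \D$ with $\|d/2- g\|\leq 1$, whence $\|d- 2g\|\leq 2$. Therefore $d- 2g\in \{h\in \D\colon \|h\|\leq 2\}\subseteq \G_0$, and consequently $d= 2g+ (d- 2g)\in 2\D+ \G_0$.
\end{proof}
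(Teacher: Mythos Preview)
Your proof of the claim is correct and essentially identical to the paper's: both use $1$-density of $\D$ to find $g\in\D$ with $\|d/2-g\|\leq 1$, deduce $d-2g\in\{h\in\D:\|h\|\leq 2\}\subseteq\G_0$, and pass to the quotient.

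One minor remark on the surrounding commentary (not the proof itself): your speculation about the subsequent step is off. The second proof of \Cref{cor: lattice not star-finite} does \emph{not} bound ``small'' coset classes as in \eqref{i: claim small card}; instead, since $\G_0$ is generated by a subset $\Lambda_0$ of a \emph{basis} $\Lambda$, the quotient $\D/\G_0$ is canonically isomorphic to the subgroup $\H$ generated by $\Lambda\setminus\Lambda_0$, and $\H=2\H$ together with discreteness forces $\H=0$. This is precisely the simplification that the freeness of $\D$ buys, avoiding the cardinality estimate \eqref{i: claim small card} altogether.
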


\begin{proof}[Proof of \Cref{claim: usual 2D stuff}] \renewcommand\qedsymbol{$\square$} This is essentially identical to the proof of \eqref{i: claim H=2H}. Since $\T$ is a tiling, for any $d\in \D$ there exists $h\in \D$ such that $\|d/2-h\|\leq 1$, or $\|d-2h\|\leq 2$. By definition, this means that $d-2h\in \G_0$, whence $d\in 2\D+ \G_0$. The second part also follows as above.
\end{proof}

The main simplification comes now. Because $\G_0$ is generated by the subset $\Lambda_0$ of the basis $\Lambda$, the group $\D/\G_0$ is canonically isomorphic to the subgroup $\H$ of $\D$ generated by $\Lambda\setminus \Lambda_0$. \Cref{claim: usual 2D stuff} then gives that $\H= 2\H$. Thus, for every $h\in \H$ and $n\in\N$ we have that $2^{-n}h\in \H$. Because $\H$ is a subgroup of the discrete group $\D$, this is only possible if $\H=0$. Consequently, it follows that $\Lambda=\Lambda_0$, in which case we get that $|\Lambda|< \Gamma$ and thus $|\D|<\Gamma$, a contradiction.
\end{proof}

\section{Discrete subgroups of normed spaces}\label{sec: subgroup}
In this section we depart from our main interest of lattice tilings and we consider discrete subgroups of normed spaces, without aiming at explicit applications to tilings. The main results here are the proof of \Cref{mainth: subgroups}\eqref{mth: subgroup general} and of Stepr\={a}ns' theorem that we mentioned in the previous section. We begin with a simple lemma that allows us to automatically obtain a bounded set of generators in an $r$-dense subgroup.

\begin{lemma}\label{lem: bdd generator} Let $\D$ be an $r$-dense subgroup of a normed space $\X$. Then, for every $\e>0$, $\D$ is generated by the set $\D\cap (2r+\e)B_\X$.
\end{lemma}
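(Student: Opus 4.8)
\textbf{Proof plan for Lemma~\ref{lem: bdd generator}.}
Fix $\e>0$ and set $\Lambda\coloneqq \D\cap (2r+\e)B_\X$; let $\G$ be the subgroup of $\D$ generated by $\Lambda$. The plan is to show $\G=\D$ by proving that $\G$ is simultaneously $r$-dense and, in a suitable quotient sense, ``$2$-divisible'' relative to $\D$, and then squeezing: the combination of these two facts forces every element of $\D$ into $\G$.

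First I would record the divisibility-type step, which is the engine of the argument and mirrors the computation used in \Cref{claim: usual 2D stuff} and in \eqref{i: claim H=2H}. I claim $\D\subseteq \G+2\D$. Indeed, given $d\in\D$, since $\D$ is $r$-dense there is $h\in\D$ with $\|d/2-h\|\leq r$, hence $\|d-2h\|\leq 2r<2r+\e$, so $d-2h\in\Lambda\subseteq\G$ and $d=(d-2h)+2h\in\G+2\D$. Iterating, for every $k\in\N$ we get $\D\subseteq \G+2^k\D$: if $d=g+2d'$ with $g\in\G$, $d'\in\D$, apply the same to $d'$ to write $d'=g'+2d''$, so $d=(g+2g')+4d''$, and continue. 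Thus for each $d\in\D$ and each $k\in\N$ there are $g_k\in\G$ and $d_k\in\D$ with $d-g_k=2^k d_k$.

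Now I would exploit discreteness to finish. Since $\D$ is $r$-dense, and $\G\subseteq\D$, the element $2^k d_k=d-g_k$ lies in $\D$; but I want to bound it. Here I use $r$-density once more at the level of $\D$ itself to control $d_k$: because $\D$ is $r$-dense the whole space is covered by balls $w+rB_\X$ with $w\in\D$, but what I actually need is that $d_k$ cannot grow — equivalently that $2^{-k}(d-g_k)\in\D$ for a \emph{fixed} target. The cleanest route is to pass to the quotient group $\D/\G$: the inclusion $\D\subseteq\G+2\D$ says $\D/\G=2\cdot(\D/\G)$, i.e.\ $\D/\G$ is a $2$-divisible abelian group. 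But $\D/\G$ is a quotient of the discrete (hence free abelian, by Stepr\={a}ns' theorem) group $\D$ by the subgroup $\G$ which is generated by $\Lambda$; more elementarily, every nonzero element $\bar d\in\D/\G$ would then be infinitely $2$-divisible, yielding elements $d_k\in\D$ with $2^k d_k\equiv d\pmod\G$, and choosing representatives one produces a sequence in $\D$ with $\|2^k d_k - d\|$ bounded (using $r$-density to pick bounded coset representatives), forcing $\|d_k\|\to 0$, so $d_k=0$ for large $k$ by discreteness of $\D$, whence $\bar d=0$. Therefore $\D/\G=0$, i.e.\ $\G=\D$.

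The main obstacle is the last squeezing step: turning ``$\D/\G$ is $2$-divisible'' into ``$\D/\G=0$''. A $2$-divisible abelian group need not be trivial in general (e.g.\ $\mathbb{Q}$), so one genuinely needs the metric/discreteness input, not just the algebra. The point to get right is that one can choose the iterated representatives $d_k\in\D$ to stay in a bounded region — concretely, since at each stage $\|d/2-h\|\le r$ we may keep $\|d_k\|\le \|d\|/2^k + (\text{bounded tail})$, so $d_k\to 0$; then $r$-separation of $\D$ (which is automatic for a discrete subgroup) forces $d_k=0$ eventually. I expect that once this boundedness bookkeeping is done carefully the rest is routine, and in fact one can avoid invoking freeness of $\D$ altogether, making the lemma self-contained.
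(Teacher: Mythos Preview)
Your halving step $\D\subseteq \G+2\D$ and its iteration to $d=g_k+2^kd_k$ with $g_k\in\G$, $d_k\in\D$ are correct, and tracking the recursion $\|d_k\|\leq \|d_{k-1}\|/2+r$ gives $\|d_k\|\leq \|d\|/2^k + 2r$. But your final ``squeeze'' is broken for two reasons. First, the lemma does \emph{not} assume $\D$ is discrete --- only $r$-dense --- so there is no separation constant to invoke (and indeed $\D=\X$ is a valid instance). Second, even granting discreteness, your bound does not give $d_k\to 0$: the tail $2r$ does not vanish, so you only obtain $\limsup\|d_k\|\leq 2r$. The plan to conclude $d_k=0$ for large $k$ therefore fails.

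The fix is immediate and makes your route work without any discreteness: once $k$ is large enough that $\|d\|/2^k<\e$, the bound yields $\|d_k\|<2r+\e$, so $d_k\in\Lambda\subseteq\G$, whence $d=g_k+2^kd_k\in\G$. For comparison, the paper's proof is different and avoids halving altogether: it shows directly that $\G$ is itself $r$-dense by proving $(r+\e)B_\X\subseteq \G+rB_\X$ and iterating additively to $(r+n\e)B_\X\subseteq \G+rB_\X$ for all $n$, hence $\X=\G+rB_\X$; then for $d\in\D$ one picks $h\in\G$ with $\|d-h\|\leq r$, so $d-h\in\Lambda$ and $d\in\G$. Both arguments are short, but the paper's never even tempts one to import discreteness.
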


\begin{proof} Let us define $\Lambda\coloneqq \D\cap (2r+\e)B_\X$ and let $\D_0$ be the group generated by $\Lambda$. We shall prove that $\D_0$ coincides with $\D$. We first observe that $(r+\e)B_\X\subseteq \Lambda+ rB_\X$. In fact, if $\|x\|\leq r+\e$, we can find $d\in \D$ such that $\|x- d\|\leq r$; clearly, $\|d\|\leq 2r+\e$, whence $d\in \Lambda$. This yields $x\in \Lambda+ rB_\X$ and proves the inclusion. In turn, this implies
\begin{equation*} \begin{split}
    (r+2\e)B_\X &= (r+\e) B_\X + \e B_\X \subseteq \Lambda + (r+\e)B_\X\\ &
    \subseteq \Lambda+\Lambda + rB_\X \subseteq \D_0 + rB_\X.
\end{split}
\end{equation*}
Iterating the same argument we obtain $(r+n\e)B_\X\subseteq \D_0 + rB_\X$, and thus $\X\subseteq \D_0+ rB_\X$. Finally, we show that $\D_0$ coincides with $\D$. Let $d\in \D$. Since $\X\subseteq \D_0+ rB_\X$, there exists $h\in \D_0$ such that $\|d-h\|\leq r$. By definition, $d-h \in \Lambda\subseteq \D_0$, which implies $d\in\D_0$.
\end{proof}

\begin{remark} In general it is not possible to take $\e=0$ in \Cref{lem: bdd generator}. In fact, in \Cref{thm: only vertices touch} we proved that the $1$-dense subgroup $\D$ of $\ell_1(\Gamma)$ from \Cref{mainth: subgroups}\eqref{mth: subgroup ell_p} satisfies 
\[ \D\cap 2B_{\ell_1(\Gamma)}\subseteq \{0, \pm2 e_\alpha\}_{\alpha<\Gamma}. \]
Therefore, the subgroup generated by $\D\cap 2B_{\ell_1(\Gamma)}$ is a subgroup of the even integer grid $\bigoplus_{\alpha<\Gamma} (2e_\alpha)\ZZ$, which is clearly not $1$-dense. Hence, $\D$ is not generated by $\D\cap 2B_{\ell_1(\Gamma)}$.
\end{remark}

We are now ready for the proof of \Cref{mainth: subgroups}\eqref{mth: subgroup general}, whose statement we repeat here, for convenience of the reader.

\begin{theorem}\label{thm: group in X} Let $\X$ be an infinite-dimensional normed space. Then, for every $\e>0$, there exists a subgroup $\D$ of $\X$ that is $1$-separated and $(1+\e)$-dense. Further, the group is generated by a set of vectors of norm at most $2+\e$.
\end{theorem}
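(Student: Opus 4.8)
The plan is to mimic the transfinite‑induction scheme of \Cref{thm: ell_p subgroup}, but replacing the use of the explicit $\ell_p$‑norm splitting \eqref{eq: split norm} by an approximate version obtained from the infinite‑dimensionality of $\X$. Fix $\e>0$; without loss of generality assume $\e$ is small, and fix an auxiliary summable sequence $(\e_n)_{n\in\N}$ of positive reals with $\sum_n \e_n < \e$ (these will be the errors we accept at successive stages). Let $\Gamma\coloneqq\dens(\X)$ and fix a dense set $\{u_\alpha\}_{\alpha<\Gamma}$ of $\X$ with $u_0=0$; we build an increasing chain $(\D_\alpha)_{\alpha<\Gamma}$ of $1$‑separated subgroups with $|\D_\alpha|\leq\max\{|\alpha|,\omega\}$ and such that some $d\in\D_\alpha$ has $\|u_\alpha-d\|\leq 1+\e$. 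The union $\D\coloneqq\bigcup_{\alpha<\Gamma}\D_\alpha$ is then a $1$‑separated, $(1+\e)$‑dense subgroup of $\X$, and the bounded‑generator clause follows automatically from \Cref{lem: bdd generator}: since $\D$ is $(1+\e)$‑dense, it is generated by $\D\cap(2(1+\e)+\e')B_\X$, and choosing $\e'$ small enough relative to $\e$ shows $\D$ is generated by vectors of norm at most $2+\e$ (after a harmless rescaling of $\e$ at the start).

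The heart of the matter is the successor step. Suppose $(\D_\alpha)_{\alpha<\gamma}$ has been constructed and, as in \Cref{thm: ell_p subgroup}, assume the nontrivial case $\|u_\gamma-d\|>1$ for all $d\in\bigcup_{\alpha<\gamma}\D_\alpha$. Let $Y$ be the closed span of $\bigl(\bigcup_{\alpha<\gamma}\D_\alpha\bigr)\cup\{u_\gamma\}$; then $\dens(Y)\leq\max\{|\gamma|,\omega\}<\Gamma$ if $\Gamma$ is uncountable, and if $\Gamma=\omega$ one instead works inside a finite‑dimensional-up-to-$\e_n$ approximation — so in either case $Y$ is a proper, "small" subspace and, by infinite‑dimensionality of $X$ together with a Riesz‑lemma / Hahn–Banach argument, we may pick a unit vector $v=v_\gamma$ and a norm‑one functional $f$ with $f(v)=1$ and $\|f\|_{Y}$ small, so that $\|y+tv\|\geq \max\{\|y\|,|t|\}-(\text{tiny})$ for $y\in Y$, $t\in\R$; more precisely, for the relevant finite list of vectors $d+nu_\gamma$ that arise, we arrange $\|d+nu_\gamma+nv\|\geq \|d+nu_\gamma\| + |n| - \e_\gamma'$ (when $\Gamma=\omega$ one just needs this for the finitely many $d$ of norm $\leq 3$, say, which lands us in a fixed finite‑dimensional subspace where Riesz's lemma gives the near‑orthogonal $v$). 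We then set
\[
\D_\gamma\coloneqq \bigcup_{\alpha<\gamma}\D_\alpha + (u_\gamma+v)\ZZ,
\]
which contains $u_\gamma+v$ at distance $1$ from $u_\gamma$, hence satisfies the density clause with room to spare. Separation is checked exactly as in \Cref{thm: ell_p subgroup} by cases on $|n|$ in $x=d+n(u_\gamma+v)$: for $n=0$ use the inductive hypothesis; for $|n|\geq 2$, $\|x\|\geq |n|-\e_\gamma'\geq 1$; for $|n|=1$, $\|x\|\geq \|\pm d-u_\gamma\|-\e_\gamma' > 1-\e_\gamma' $, and a careful bookkeeping of the accumulated errors (they are summable, bounded by $\e$) keeps everything $1$‑separated — here it is cleanest to run the whole induction with the target "$1+\e$‑separated" replaced by an exact "$1$" by absorbing all the $\e_\gamma'$ into a single global loss, or equivalently by working with a slightly larger ambient constant and rescaling at the end.

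The main obstacle I expect is the error management in the $|n|=1$ case together with the choice of the near‑orthogonal direction $v$: unlike the $\ell_p$ case, where \eqref{eq: split norm} is an exact identity, here one only gets $\|d+n u_\gamma + nv\|\geq \|d+nu_\gamma\|+|n|-\e_\gamma'$, so one must ensure the slack $\e_\gamma'$ chosen at stage $\gamma$ is small enough not only in absolute terms but relative to $\inf_{d}\|d-u_\gamma\|-1>0$, which depends on the already-built part of the chain. This is manageable because at stage $\gamma$ the set $\bigcup_{\alpha<\gamma}\D_\alpha$ and $u_\gamma$ are already fixed, so the required positivity is available; but it does mean the $\e_\gamma'$ cannot be chosen in advance as a single sequence and must be selected adaptively during the induction, with the only global requirement being $\sum_\gamma$-type control replaced by: at every stage the current separation constant stays $\geq 1$. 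The case $\Gamma=\omega$ needs the extra observation that one can carry out the above in an increasing union of finite‑dimensional subspaces, so that Riesz's lemma (rather than a quotient argument) supplies $v$; this is precisely the simplification alluded to after \Cref{thm: ell_p subgroup} in the excerpt.
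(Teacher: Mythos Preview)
Your skeleton---transfinite induction over a dense set $(u_\alpha)$, adding at each stage a generator $u_\gamma+v$ where $v$ is a Riesz-type almost-orthogonal direction, and then invoking \Cref{lem: bdd generator} for the bounded-generator clause---is precisely the paper's. The execution, however, has a real gap.

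The inequality you claim, $\|d+nu_\gamma+nv\|\geq\|d+nu_\gamma\|+|n|-\e_\gamma'$, is an $\ell_1$-type additive splitting and simply does not follow from any Riesz/Hahn--Banach argument in a general normed space (already in a Hilbert space it fails). What a unit $v$ with $\dist(v,Y)\geq 1-\e_\gamma'$ \emph{does} give is $\|y+nv\|\geq|n|(1-\e_\gamma')$ for every $y\in Y$ and $n\neq 0$; this single estimate handles all $n\neq 0$ at once, so the case-split $|n|=1$ versus $|n|\geq 2$ is unnecessary, and so is the assumption $\|u_\gamma-d\|>1$ for all $d$. In particular the quantity $\inf_d\|u_\gamma-d\|-1$ that you want strictly positive may well be $0$ (a $1$-separated set is closed but need not be proximinal), so that part of the plan cannot be carried out as written.

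The paper sidesteps \emph{all} error management by one small change: rather than a \emph{unit} $v$ at distance $\approx 1$ from $\Z_\gamma\coloneqq\overline{\spn}\big(\bigcup_{\alpha<\gamma}\D_\alpha\cup\{u_\gamma\}\big)$, take (rescaled Riesz) a vector $x_\gamma$ with $\|x_\gamma\|\leq 1+\e$ and $\dist(x_\gamma,\Z_\gamma)\geq 1$ \emph{exactly}. Then density absorbs the $\e$ (since $\|u_\gamma-(u_\gamma+x_\gamma)\|\leq 1+\e$), while for separation one gets, for every $n\neq 0$,
\[
\|d+n(u_\gamma+x_\gamma)\|=|n|\cdot\Big\|\tfrac{d}{n}+u_\gamma+x_\gamma\Big\|\geq|n|\cdot\dist(x_\gamma,\Z_\gamma)\geq|n|\geq 1,
\]
with no error term. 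So each $\D_\gamma$ is \emph{exactly} $1$-separated and nothing accumulates; the summable $(\e_n)$, the adaptive $\e_\gamma'$, and the final rescaling are all unnecessary.

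Finally, for $\Gamma=\omega$ your condition $|\D_\alpha|\leq\max\{|\alpha|,\omega\}$ does not ensure $\Z_\gamma$ is proper; the paper tracks instead that $\D_\alpha$ is generated by at most $|\alpha|$ vectors, so that for finite $\gamma$ the subspace $\Z_\gamma$ is finite-dimensional, and Riesz applies directly without the vague ``finite-dimensional-up-to-$\e_n$'' workaround.
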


\begin{proof} Let $\Gamma$ be the density character of $\X$ and select a collection $(u_\alpha)_{\alpha< \Gamma}$ of vectors in $\X$ that is dense in $\X$; we also assume that $u_0=0$. We now build by transfinite induction an increasing chain $(\D_\alpha)_{\alpha< \Gamma}$ of subgroups of $\X$ such that for all $\alpha<\Gamma$
\begin{enumerate}
    \item\label{5.2.i: septd} $\D_\alpha$ is $1$-separated,
    \item\label{5.2.ii: gen} $\D_\alpha$ is generated by at most $|\alpha|$ elements,
    \item\label{5.2.iii: close} there is $d\in \D_\alpha$ with $\|u_\alpha- d\|\leq 1+\e$.
\end{enumerate}
This directly implies the result. In fact, $\D\coloneqq \bigcup_{\alpha< \Gamma} \D_\alpha$ is clearly a $1$-separated subgroup of $\X$. Further, given $x\in \X$ there is $\alpha< \Gamma$ with $\|x- u_\alpha\|\leq \e$. Therefore, there is $d\in \D_\alpha$ with $\|x-d\|\leq 1+2\e$, and $\D$ is $(1+2\e)$-dense. Finally, it follows at once from \Cref{lem: bdd generator} that $\D$ is generated by the vectors of norm at most $2+3\e$.

The transfinite induction argument is a simplification of the one in \Cref{thm: ell_p subgroup}. We begin by setting $\D_0=\{0\}$. Suppose that we have already defined the desired subgroups $(\D_\alpha)_{\alpha< \gamma}$, for some $\gamma< \Gamma$. Then, $\bigcup_{\alpha< \gamma} \D_\alpha$ is a subgroup of $\X$ that is $1$-separated and is generated by at most $|\gamma|$ elements (in case when $\gamma= \beta+1$ is a finite ordinal, $\bigcup_{\alpha< \gamma} \D_\alpha= \D_\beta$, so the claim also holds). In particular, $\Z_\gamma\coloneqq \overline{\rm span}(\bigcup_{\alpha< \gamma} \D_\alpha\cup \{u_\gamma\})$ is a proper subspace of $\X$. Therefore, we are in position to apply Riesz' lemma and obtain a norm one vector in $\X$ that has distance at least $(1+\e)^{-1}$ from $\Z_\gamma$. By rescaling, we thus can  obtain a vector $x_\gamma\in \X$ such that $\|x_\gamma\|\leq 1+\e$ and $\dist(x_\gamma,\Z_\gamma)\geq 1$. We can now define
\[ \D_\gamma\coloneqq \bigcup_{\alpha< \gamma} \D_\alpha+ (u_\gamma+ x_\gamma)\ZZ. \]

By definition, $\D_\gamma$ is generated by adding just one element to the generators of $\bigcup_{\alpha< \gamma} \D_\alpha$; thus, \eqref{5.2.ii: gen} holds. \eqref{5.2.iii: close} is trivial because $u_\gamma+ x_\gamma$ belongs to $\D_\gamma$ and its distance from $u_\gamma$ is at most $1+\e$. Finally, to check the validity of \eqref{5.2.i: septd}, we check that every non-zero element in $\D_\gamma$ has norm at least $1$. Thus, take a vector $d+n(u_\gamma+ x_\gamma)$, where $d\in \bigcup_{\alpha< \gamma} \D_\alpha$ and $n\in \ZZ$. In the case when $n=0$, the norm of $d+n(u_\gamma+ x_\gamma)$ is $\|d\|\geq1$. Instead, if $n\neq 0$,
\[ \|d+n(u_\gamma+ x_\gamma)\|= |n| \left\|\left(\frac{d}{n}+ u_\gamma\right) +x_\gamma \right\|. \]
By definition, $-(\frac{d}{n}+ u_\gamma)$ belongs to $\Z_\gamma$, so it has distance at least $1$ from $x_\gamma$. Hence, the term above is at least $|n|\geq1$, and we are done.
\end{proof}

\begin{remark} With a small adaptation of the proof, we can also obtain a subgroup $\D$ that is $1$-separated and with $\dist(x,\D)\leq 1$ for every $x\in \X$ (namely, $\D$ is $(1+\e)$-dense for every $\e>0$). In fact, when $\X$ is separable it is just enough to use $\e_n= 2^{-n}$ in the $n$-th step of the argument. In the non-separable case, instead, we have to replace \eqref{5.2.ii: gen} with $|\D_\alpha|\leq \max \{|\alpha|,\omega\}$ in the inductive construction. Then, having the subspace $\Z_\gamma$, we find a vector $x_\gamma^1$ of norm at most $1+\e_1$ and having distance $1$ from $\Z_\gamma$. We then repeat the argument with $\Z_\gamma^1\coloneqq \spn\{\Z_\gamma, x_\gamma^1\}$ and find, at distance $1$ from $\Z_\gamma^1$, a vector $x_\gamma^2$ of norm at most $1+\e_2$. We continue by induction in the same way and we then let $\D_\gamma$ be the group generated by $\bigcup_{\alpha< \gamma} \D_\alpha$ and the vectors $\{u_\gamma+ x_\gamma^n\}_{n\in \N}$.
\end{remark}

We now pass to the second part of the section where we give an alternative proof of Stepr\={a}ns' result \cite{Steprans} (see also \cite{Fuchs}*{Section 3.10}) that discrete subgroups of normed spaces are free, which we used in the previous section. The same result also appears in the recent paper \cite{KaniaKostana}. Both proofs in \cite{Steprans} and \cite{KaniaKostana} depend upon Shelah's singular compactness theorem \cite{Shelah} and set theoretical machinery, such as the pressing-down lemma or elementary submodels, respectively. Our argument below is essentially a translation of \cite{KaniaKostana} into a transfinite induction proof. The main advantage of this more tangible approach is that it makes apparent that no regularity is required in the proof; as a consequence, our argument does not require the singular compactness theorem from \cite{Shelah} and is entirely self-contained.

Recall that an Abelian group $\G$ is \emph{free} if it admits a \emph{basis}, namely a subset $\Lambda$ such that every element $g\in \G$ can be uniquely expressed as
\[ g= \sum_{j=1}^n k_j \lambda_j, \quad \mbox{where } k_j\in \ZZ \mbox{ and } \lambda_j\in \Lambda. \]
It is an easy exercise to check that being free is a three-space property. More precisely, if $\H$ is a free subgroup of an Abelian group $\G$ and $\G/ \H$ is free, then $\G$ is also free and every basis of $\H$ can be extended to a basis of $\G$. By iterating this, one obtains the following standard fact (see, \emph{e.g.}, \cite{Eklof}*{Theorem 2.6}, or \cite{Fuchs}*{Section 3.7}).

\begin{fact}\label{fact: free by chain} Suppose that $\G$ is an Abelian group and $(\G_\alpha)_{\alpha< \Gamma}$ is a continuous increasing chain of subgroups\footnote{In group theory literature this is usually called a \emph{filtration}.} such that $\G= \bigcup_{\alpha< \Gamma} \G_\alpha$. If $\G_\alpha$ and $\G_{\alpha+1}/ \G_\alpha$ are free for each $\alpha< \Gamma$, then $\G$ is also free.
\end{fact}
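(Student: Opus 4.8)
The plan is to build, by transfinite induction on $\alpha<\Gamma$, a \emph{coherent} family of bases: a basis $\Lambda_\alpha$ of $\G_\alpha$ for each $\alpha$, subject to the requirement that $\Lambda_\beta\subseteq \Lambda_\alpha$ whenever $\beta\leq\alpha<\Gamma$. Granted this, the set $\Lambda\coloneqq \bigcup_{\alpha<\Gamma}\Lambda_\alpha$ will turn out to be a basis of $\G=\bigcup_{\alpha<\Gamma}\G_\alpha$, which is exactly what we want. The base of the induction is immediate: $\G_0$ is free by hypothesis, so we let $\Lambda_0$ be any basis of it.

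At a successor stage, assume $\Lambda_\alpha$ is a basis of $\G_\alpha$ and look at the short exact sequence $0\to \G_\alpha\to \G_{\alpha+1}\to \G_{\alpha+1}/\G_\alpha\to 0$. Since $\G_{\alpha+1}/\G_\alpha$ is free, it is projective, hence the sequence splits and $\G_{\alpha+1}=\G_\alpha\oplus C$ for some subgroup $C\cong \G_{\alpha+1}/\G_\alpha$; concretely, $C$ is generated by lifts to $\G_{\alpha+1}$ of a chosen basis of $\G_{\alpha+1}/\G_\alpha$. Fixing a basis $B$ of $C$, one checks that $\Lambda_{\alpha+1}\coloneqq \Lambda_\alpha\cup B$ is a basis of $\G_{\alpha+1}$ containing $\Lambda_\alpha$: uniqueness of representations in a direct sum, together with uniqueness in each summand, gives both spanning and $\ZZ$-linear independence. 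This is precisely the ``three-space'' step and is the only place where a genuine structural argument (projectivity of free groups, and that bases of the two summands concatenate to a basis) is invoked; I expect it to be the main, albeit entirely standard, obstacle.

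At a limit stage $\lambda<\Gamma$, set $\Lambda_\lambda\coloneqq \bigcup_{\alpha<\lambda}\Lambda_\alpha$. Continuity of the chain gives $\G_\lambda=\bigcup_{\alpha<\lambda}\G_\alpha$, and this is exactly where that hypothesis is used. To see $\Lambda_\lambda$ is a basis of $\G_\lambda$: every $g\in\G_\lambda$ lies in some $\G_\alpha$ with $\alpha<\lambda$, hence is a $\ZZ$-combination of $\Lambda_\alpha\subseteq \Lambda_\lambda$; and any $\ZZ$-linear relation among finitely many elements of $\Lambda_\lambda$ involves only elements of a single $\Lambda_\alpha$ (the family $(\Lambda_\alpha)_{\alpha<\lambda}$ being increasing, hence directed under inclusion), so it is trivial because $\Lambda_\alpha$ is a basis. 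This closes the induction.

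Finally, the very same verification, now applied to $\Lambda=\bigcup_{\alpha<\Gamma}\Lambda_\alpha$ and $\G=\bigcup_{\alpha<\Gamma}\G_\alpha$, shows that $\Lambda$ spans $\G$ and is $\ZZ$-linearly independent, i.e.\ it is a basis of $\G$. Hence $\G$ is free, and moreover the construction shows that any prescribed basis of $\G_0$ (or indeed of any $\G_{\alpha_0}$) can be extended to such a basis of $\G$.
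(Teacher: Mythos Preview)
Your proof is correct and follows essentially the same approach as the paper: both build an increasing chain of bases $(\Lambda_\alpha)_{\alpha<\Gamma}$ by transfinite induction, using the three-space property (which you spell out via projectivity and splitting of the short exact sequence) at successor stages and continuity of the chain at limit stages, and then take the union. Your version simply makes explicit the details that the paper leaves to the preceding remark about the three-space property.
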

By continuity of the chain we mean that $\G_\gamma= \bigcup_{\alpha< \gamma}\G_\alpha$, when $\gamma< \Gamma$ is a limit ordinal.

\begin{proof} We construct by transfinite induction a continuous increasing chain $(\Lambda_\alpha)_{\alpha< \Gamma}$ such that each $\Lambda_\alpha$ is a basis for $\G_\alpha$. Start with a basis $\Lambda_0$ of $\G_0$. At successor stages, we use the three-space property and the fact that $\G_{\alpha+1}/ \G_\alpha$ is free to extend the basis $\Lambda_\alpha$ of $\G_\alpha$ to a basis $\Lambda_{\alpha+1}$ of $\G_{\alpha+1}$. At a limit stage $\gamma$, we set $\Lambda_\gamma\coloneqq \bigcup_{\alpha< \gamma}\Lambda_\alpha$. By continuity of the chain $(\G_\alpha)_{\alpha< \Gamma}$, we see that $\Lambda_\gamma$ is a basis for $\G_\gamma$, which concludes the construction. By the same reasoning, it is then clear that $\bigcup_{\alpha< \Gamma}\Lambda_\alpha$ is a basis for $\G$.
\end{proof}

\begin{theorem}[Stepr\={a}ns, \cites{Steprans}]\label{thm: discrete is free} Discrete subgroups of normed spaces are free.
\end{theorem}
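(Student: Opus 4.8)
The plan is to argue by transfinite induction on the cardinal $\kappa\coloneqq|\D|$, realising $\D$ as the union of a continuous increasing chain of subgroups to which \Cref{fact: free by chain} applies. The base cases are easy: if $\kappa$ is finite then $\D$ is a finitely generated torsion-free Abelian group, hence free, while if $\kappa=\omega$ one may invoke the classical fact (see \cites{ADG, Lawrence, Sidney, Zorzitto}) that countable discrete subgroups of normed spaces are free; this last fact also follows from Pontryagin's criterion, since any finite-rank subgroup $H$ of $\D$ spans a finite-dimensional subspace $V$ in which $\D\cap V$ is a discrete subgroup of $V\cong\R^{\dim V}$, hence finitely generated, so $H$ itself is finitely generated. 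We therefore assume $\kappa>\omega$ and that the statement holds for every discrete subgroup of an arbitrary normed space of cardinality $<\kappa$. Fix $r>0$ with $\D$ being $r$-separated and an enumeration $\D=\{g_\xi\colon\xi<\kappa\}$.

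The heart of the matter is to build, by an auxiliary transfinite recursion, a continuous increasing chain $(\D_\xi)_{\xi<\kappa}$ of subgroups of $\D$ together with the closed subspaces $\Y_\xi\coloneqq\overline{\spn}\,\D_\xi$ of $\X$, subject to: (a) $g_\xi\in\D_{\xi+1}$ and $|\D_\xi|\leq|\xi|+\omega$ (whence $|\D_\xi|<\kappa$ and $\bigcup_{\xi<\kappa}\D_\xi=\D$); (b) $\D_\xi=\D\cap\Y_\xi$; (c) the closure condition that whenever $v\in\X$ is a rational linear combination of elements of $\D_\xi$ and some $d\in\D$ satisfies $\|d-v\|<r/2$, then $d\in\D_\xi$. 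Condition (c) is what replaces the elementary submodels of \cite{KaniaKostana}: since $\D$ is $r$-separated, a point $v$ admits at most one $d\in\D$ with $\|d-v\|<r/2$, so ``closing'' a set under the resulting single-valued partial assignment $v\mapsto d$ — iterating $\omega$ times and enlarging the rational span at each round — increases its cardinality only by $\omega$, which is exactly what keeps (a) intact. Limit stages are handled by unions, (b) and (c) passing to the limit because every rational combination involves only finitely many vectors. I expect the bookkeeping that simultaneously secures (a), (b), and (c) to be the most delicate point of the argument.

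Granting the chain, the decisive consequence is that each successive quotient $\D_{\xi+1}/\D_\xi$ is again a discrete subgroup of a normed space, namely of $\X/\Y_\xi$, and of cardinality $<\kappa$. Indeed, by (b) the quotient map $q_\xi\colon\X\to\X/\Y_\xi$ restricts to an isomorphism of $\D_{\xi+1}/\D_\xi$ onto $q_\xi(\D_{\xi+1})$, and (c) forces $q_\xi(\D)$ to be $(r/2)$-separated: if $d\in\D$ had $\dist(d,\Y_\xi)<r/2$ then, since $\Y_\xi$ is the closure of the rational span of $\D_\xi$, there would be a rational combination $v$ of elements of $\D_\xi$ with $\|d-v\|<r/2$, whence $d\in\D_\xi\subseteq\Y_\xi$ by (c); so any two elements of $\D$ with distinct images modulo $\Y_\xi$ are at distance $\geq r/2$ in $\X/\Y_\xi$, and a fortiori the same holds for $q_\xi(\D_{\xi+1})$.

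To conclude, the inductive hypothesis applies both to each $\D_\xi$ — a discrete subgroup of $\X$ of cardinality $<\kappa$ — and to each $\D_{\xi+1}/\D_\xi$ — a discrete subgroup of the normed space $\X/\Y_\xi$ of cardinality $<\kappa$ — so all of them are free; since the chain $(\D_\xi)_{\xi<\kappa}$ is continuous with union $\D$, \Cref{fact: free by chain} yields that $\D$ is free. We emphasise that this scheme makes no distinction between regular and singular $\kappa$: in either case the chain has pieces of cardinality $<\kappa$, handled by the inductive hypothesis, which is precisely why Shelah's singular compactness theorem can be dispensed with.
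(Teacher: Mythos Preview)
Your proposal is correct and follows essentially the same route as the paper's proof: transfinite induction on $|\D|$, a closing-off construction that guarantees the successive quotients $\D_{\xi+1}/\D_\xi$ embed as discrete subgroups of the quotient normed space $\X/\Y_\xi$, and an appeal to \Cref{fact: free by chain}. The only cosmetic difference is that the paper takes subspaces $\Z_\alpha$ as the primary objects (with the condition $\dist(g,\Z_\alpha)<1/3\Rightarrow g\in\Z_\alpha$) and then sets $\G_\alpha\coloneqq\G\cap\Z_\alpha$, whereas you build the subgroups $\D_\xi$ directly and impose the closure condition (c) on rational combinations; your (c) implies the paper's condition and also forces (b) automatically, so the two formulations are interchangeable.
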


\begin{proof} We argue by transfinite induction on the cardinality $\Gamma$ of the group $\G$. The case when $\Gamma= \omega$ is a folklore fact \cites{ADG, Lawrence, Sidney, Zorzitto}, that also follows from a simplification of the argument below. Therefore, we fix a cardinal $\Gamma> \omega$ and we assume that all discrete subgroups $\G$ of normed spaces, with $|\G|< \Gamma$, are free. We now fix a discrete subgroup $\G$ of a normed space $\X$ such that $|\G|= \Gamma$; without loss of generality, assume that $\G$ is $1$-separated.

The following claim is at the core of the argument and it is essentially \cite{KaniaKostana}*{Lemma 3.1}.
\begin{claim}\label{claim: closing-off} For every subspace $\Z$ of $\X$ there exists a subspace $\tilde{\Z}\supseteq \Z$ of $\X$ such that $\dens(\Z)= \dens(\tilde{\Z})$ and with the property that, for $g\in \G$,
\begin{equation*}\label{eq: dagger}\tag{$\dagger$}
    \dist(g, \tilde{\Z})< 1/3 \quad \Longrightarrow \quad g\in \tilde{\Z}.  
\end{equation*}
\end{claim}
\begin{proof}[Proof of \Cref{claim: closing-off}] \renewcommand\qedsymbol{$\square$} Let $\Z_0\coloneqq \Z$, consider the set $\Lambda_0\coloneqq \{g\in \G\colon \dist(g, \Z_0)< 1/3\}$, and define $\Z_1\coloneqq \overline{\spn}\{ \Z_0, \Lambda_0\}$. Next, take $\Lambda_1\coloneqq \{g\in \G\colon \dist(g, \Z_1)< 1/3\}$ and let $\Z_2\coloneqq \overline{\spn}\{ \Z_1, \Lambda_1\}$. We continue by induction in the obvious way and we then define $\tilde{\Z}\coloneqq \overline{\bigcup_{k<\omega} \Z_k}$. The validity of \eqref{eq: dagger} is clear: if $\dist(g, \tilde{\Z})< 1/3$, then $\dist(g, \Z_k)< 1/3$ for some $k$, whence $g\in \Z_{k+1}\subseteq \tilde{\Z}$.

Finally, we check that $|\Lambda_k|\leq \dens(\Z_k)$ for each $k< \omega$; this yields $\dens(\Z_{k+1})= \dens(\Z_k)$, whence $\dens(\Z)= \dens(\tilde{\Z})$. By definition of $\Lambda_k$, for each $g\in \Lambda_k$ there exists $z_g\in \Z_k$ such that $\|g- z_g\|<1/3$. If it were that $|\Lambda_k|> \dens(\Z_k)$, it would follow that there are distinct $g,h\in \Lambda_k$ such that $\|z_g- z_h\|<1/3$, which would yield the contradiction that $\|g-h\|<1$. Therefore, $|\Lambda_k|\leq \dens(\Z_k)$, and we are done.
\end{proof}

Write $\G=\{g_\alpha\}_{\alpha<\Gamma}$. We now build by transfinite induction a continuous increasing chain $(\Z_\alpha)_{\alpha< \Gamma}$ of subspaces of $\X$ such that $\dens(\Z_\alpha)\leq \max\{|\alpha|,\omega\}$, $g_\alpha\in \Z_{\alpha+1}$, and each $\Z_\alpha$ satisfies \eqref{eq: dagger}. In fact, at a successor level we apply \Cref{claim: closing-off} to $\spn\{\Z_\alpha, g_\alpha \}$ to obtain the subspace $\Z_{\alpha+1}$; if $\gamma< \Gamma$ is a limit ordinal, we set $\Z_\gamma\coloneqq \overline{\bigcup_{\alpha< \gamma}\Z_\alpha}$. The fact that each $\Z_\alpha$ satisfies \eqref{eq: dagger} easily implies that $\Z_\gamma$ satisfies it too, hence the induction is complete.

To conclude, set $\G_\alpha\coloneqq \G\cap \Z_\alpha$, so that $\G= \bigcup_{\alpha<\Gamma} \G_\alpha$. Since $\dens(\Z_\alpha)<\Gamma$ and $\G_\alpha\subseteq \Z_\alpha$ is discrete, $|\G_\alpha|<\Gamma$; therefore, $\G_\alpha$ is free by assumption. Further, \eqref{eq: dagger} implies that $\G_{\alpha+1}/\G_\alpha$ is a discrete subgroup of $\Z_{\alpha+1}/\Z_\alpha$. Thus, our assumption also implies that $\G_{\alpha+1}/ \G_\alpha$ is free. Finally, the chain $(\G_\alpha)_{\alpha< \Gamma}$ is continuous, because for a limit ordinal $\gamma$ we have
\[ \bigcup_{\alpha<\gamma} \G_\alpha= \G\cap \bigcup_{\alpha<\gamma} \Z_\alpha= \G\cap \overline{\bigcup_{\alpha<\gamma} \Z_\alpha}= \G\cap \Z_\gamma. \]
Here, the intermediate equality follows from the validity of \eqref{eq: dagger} for each $\Z_\alpha$ and the last one from the continuity of the chain $(\Z_\alpha)_{\alpha< \Gamma}$. Therefore, we are in position to apply \Cref{fact: free by chain}, which yields that $\G$ is free and concludes the proof.
\end{proof}

\section{Open Problems}\label{sec: problems}
In this last section we formulate or reiterate some open problems related to our results from the previous sections. We begin with some questions that stem from \Cref{mainth: ell2 body} and concern Hilbert spaces.

In the light of \Cref{probl: FL} and \Cref{mainth: ell2 body}, it is natural to ask whether a tiling by balls might possibly exist also in a separable Hilbert space. Notice that Klee's approach from \cite{Klee1} and ours from \Cref{mainth: ell2 body} are inherently non-separable, because no countable proximinal subset of $\ell_2$ is $r$-dense, \cite{FonfLind}*{Proposition 2.1}. A positive answer would provide a substantial improvement of Preiss' normal tiling of $\ell_2$ constructed in \cite{Preiss}.

\begin{problem} Does there exist an equivalent norm $\nn\cdot$ on $\ell_2$, such that $(\ell_2,\nn\cdot)$ admits a (lattice) tiling by balls of unit radius? More generally, can $\ell_2$ be tiled by translates of a bounded convex body?
\end{problem}

Klee's tiling \cites{Klee1} in $\ell_1(\Gamma)$ that inspired our construction had the additional feature to be disjoint. While we saw that our method cannot produce disjoint tilings (\Cref{prop: lattice not disjoint}), it remains open whether a disjoint tiling of a Hilbert space might be possible at all (notice that a disjoint tiling is only possible for normed spaces of density at least $\mathfrak{c}$, \cite{Klee1}*{Proposition 3.3.}). Hence, we repeat (and rephrase) the following problem from \cite{Klee1}*{Question 3.4}.

\begin{problem}\label{probl: disjointtiling} Does there exists a disjoint tiling by bounded convex bodies in $\ell_2(\mathfrak{c})$? 
\end{problem}

We now ask a couple of problems relative to \Cref{prop: point-countable}. Recall that $\ell_2(\Gamma)$ does not admit a point-finite covering by balls whenever $|\Gamma|<\mathfrak{c}$, \cites{FZHilbert, DEBEpointfinite}. On the other hand, in \Cref{prop: point-countable}, we proved that $\ell_2(\mathfrak{c})$ admits a point-countable covering with balls (of radius $1$). Therefore, the long-standing question (see, \emph{e.g.}, \cite{Klee1}*{Question 2.6}, or \cite{DEBEpointfinite}*{Problem 3.4}) whether $\ell_p(\mathfrak{c})$ ($1<p<\infty$) admits a point-finite covering by balls remains open.

We also take this opportunity to point out that in \cite{Klee2}*{Theorem 3.2} (and subsequently repeated in \cite{Zanco_carpet}) it is claimed that Klee's tiling of $\ell_p(\Gamma)$ from \cite{Klee1} is point-finite. The argument in \cite{Klee2} is however erroneous, since it uses, misquoting \cite{BRR}, the incorrect fact that the unit ball of $\ell_p(\Gamma)$ does not contain a $(2^{1/p}+)$-separated sequence.

\begin{problem}\label{probl: point-finite} Does there exists a point-finite covering by closed balls in $\ell_2(\mathfrak{c})$?
\end{problem}

While in \Cref{prop: point-countable} we obtained both the tiling and the covering claims at once, the existence of point-finite coverings by balls or point-finite tilings by balls of an equivalent norm are in principle different issues. To the best of our knowledge, the following is open for any infinite set $\Gamma$.

\begin{problem} Is there a norm $\nn\cdot$ on $\ell_2(\Gamma)$ such that $(\ell_2(\Gamma), \nn\cdot)$ admits a point-finite tiling by balls? More generally, is there an infinite-dimensional reflexive Banach space that admits a point-finite tiling (respectively, covering) with balls?
\end{problem}

We finish with one problem for a general normed space. As we said in the Introduction, separable Banach spaces do not admit tilings by rotund or smooth bounded convex bodies, \cites{KleeTri, KleeMalZan}. For non-separable spaces, the same is only known for LUR or Fr\'echet smooth Banach spaces, \cite{DEVEtiling}. As we saw in \Cref{thm: not rotund &co}, our approach can't be employed to obtain rotund or smooth bodies, therefore the following remains open.

\begin{problem}\label{probl: rotund/smooth} Does there exist a rotund and/or smooth normed space $\X$ that admits a tiling by balls?
\end{problem}

\noindent\textbf{Acknowledgements.} We are most grateful to Esteban Mart\'inez Va\~{n}\'o and Abraham Rueda Zoca for suggesting us the validity of \Cref{thm: tiling if ell1 inside} and for pointing out to us the relevance of \cite{AMR_ell1(kappa)}. The second-named author is also indebted with Antonio Avil\'es and Grzegorz Plebanek for discussing some aspects of the argument in \cite{KaniaKostana}. Such a conversation gave us the key idea for the alternative proof of \Cref{thm: discrete is free} and we are also grateful to the Erwin Schr\"{o}dinger Institute in Vienna, where this conversation took place, for the excellent working conditions. Further, we thank Piero Papini and Clemente Zanco for providing us with a copy of the paper \cite{Rog84} and Michal Doucha for pointing out to us the reference \cite{DOSZ}.


\end{document}